\numberwithin{equation}{section}
\newcommand{\mycomment}[1]{}
\DeclareMathOperator{\Span}{span}
\DeclareMathOperator{\Int}{int}
\newcommand{\ko}{\mathcal K_o^n}
\newcommand{\kn}{\mathcal K^n}
\newcommand{\ke}{\mathcal K_e^n}
\newcommand{\rn}{\mathbb R^n}
\newcommand{\sn}{ {S^{n-1}}}
\newcommand{\wt}{\widetilde}
\newtheorem{lemma}{Lemma}[section]
\newtheorem{theorem}[lemma]{Theorem}
\newtheorem{coro}[lemma]{Corollary}
\newtheorem{prop}[lemma]{Proposition}
\newtheorem{remark}[lemma]{Remark}
\title{The $L_p$ chord Minkowski problem in a critical interval}
\author[L. Guo]{Lujun Guo}
\address{College of Mathematics and Information Science, Henan Normal University, Henan 453007, China}
\email{lujunguo03018@163.com}
\author[D. Xi]{Dongmeng Xi}
\address{Department of Mathematics,
Shanghai University,
Shanghai 200444, China}
\email{xi\_dongmeng@shu.edu.cn}
\author[Y. Zhao]{Yiming Zhao}
\address{Department of Mathematics,
Syracuse University,
Syracuse, NY 13244, USA}
\email{yzhao197@syr.edu}
\subjclass{52A38, 52A40}
\keywords{
Chord integral, chord measure, $L_p$ surface area measure, $L_p$ chord measure, $L_p$ Minkowski problem,  $L_p$ chord Minkowski problem}
\begin{document}
\begin{abstract}
Chord measures and $L_p$ chord measures were recently introduced by Lutwak-Xi-Yang-Zhang by establishing a variational formula regarding a family of fundamental integral geometric invariants called chord integrals. Prescribing the $L_p$ chord measure is known as the $L_p$ chord Minkowski problem, which includes the $L_p$ Minkowski problem heavily studied in the past 2 decades as special cases. In the current work, we solve the $L_p$ chord Minkowski problem when $0\leq p<1$, without symmetry assumptions.
\end{abstract}

\maketitle


\section{Introduction}

Central to the theory of convex bodies are geometric invariants and measures associated with convex bodies. Geometric invariants and measures are usually investigated through \emph{isoperimetric inequalities} and \emph{Minkowski problems}. They are intimately connected. As an example, the celebrated \emph{Brunn-Minkowski inequality} reveals that the volume functional is log-concave in a certain sense and the classical isoperimetric inequality, as a direct consequence of it, reveals that ball is the geometric shape that minimizes surface area among convex bodies with fixed volume. The \emph{classical Minkowski problem} asks for the existence, uniqueness, and regularity of a convex body whose \emph{surface area measure} is equal to a pre-given spherical Borel measure. The two problems are closely connected since surface area measure can be viewed as the ``{derivative}'' of the volume functional. The classical Minkowski problem has motivated much of the study of fully nonlinear partial differential equation, as demonstrated by the works of Minkowski \cite{MR1511220}, Aleksandrov \cite{MR0001597}, Cheng-Yau \cite{MR0423267}, Pogorelov\cite{MR0478079}, and Caffarelli \cite{MR1005611, MR1038359,MR1038360} throughout the last century.

The volume functional is a special case of \emph{quermassintegrals} that includes surface area and mean width as two other more well-known invariants. Quermassintegrals are fundamental invariants in the classical Brunn-Minkowski theory. Depending on parametrization, their ``derivatives'' include the \emph{area measures} introduced by Aleksandrov, Fenchel, and Jessen in the 1930s, as well as the \emph{curvature measures} introduced by Federer in the late 1950s.
With sufficient regularity assumptions on the convex body, area measures and curvature measures involve elementary symmetric functions of principal curvatures and radii of curvature. This makes them much more complicated than the surface area measure studied in the classical Minkowski problem. Minkowski problems for area measures and curvature measures include the \emph{Christoffel problem} (for the area measure $S_1$) and the long-standing \emph{Christoffel-Minkowski problem} (for the area measure $S_{n-2}$).\footnote{As a comparison, the classical Minkowski problem studies the surface area measure which is also known as the area measure $S_{n-1}$.} See, for example, Guan-Guan \cite{MR1933079},  Guan-Li-Li \cite{MR2954620}, Guan-Ma \cite{MR1961338}, Guan-Ma-Zhou \cite{MR2237290}.

In the 1970s, Lutwak introduced the dual Brunn-Minkowski theory. Compared to the classical theory which focuses more on projections and boundary shapes of convex bodies, the dual Brunn-Minkowski theory focuses more on intersections and interior properties of convex bodies. This explains the crucial role that the dual theory played in the solution of the well-known and the then long-standing \emph{Busemann-Petty problem} in the 1990s. See, for example, \cite{MR1298719, MR1689343, MR963487, MR1689339}.
The counterparts for the quermassintegrals in the dual theory are the \emph{dual quermassintegrals}. See Section \ref{preliminary invariants}. However, it was not until the groundbreaking work \cite{MR3573332} of Huang-Lutwak-Yang-Zhang (Huang-LYZ) that the geometric measures associated with dual quermassintegrals were revealed. This led to \emph{dual curvature measures} dual to Federer's curvature measures. The Minkowski problem for dual curvature measures, now known as the dual Minkowski problem, has been the focus in convex geometry and fully nonlinear elliptic PDEs for the last couple of years and has already led to a number of papers in a short period. See, for example, B\"{o}r\"{o}czky-Henk-Pollehn \cite{MR3825606}, Chen-Chen-Li \cite{MR4259871}, Chen-Huang-Zhao \cite{MR3953117}, Chen-Li \cite{MR3818073}, Gardner-Hug-Weil-Xing-Ye \cite{MR3882970}, Henk-Pollehn \cite{MR3725875}, Li-Sheng-Wang \cite{MR4055992}, Liu-Lu \cite{MR4127893}, Zhao \cite{MR3880233}. It is important to note that the list is by no means exhaustive.

Unlike quermassintegrals, dual quermassintegrals, which depend on lower dimensional \emph{central} sectional areas, are \emph{not} translation invariant. Integrating dual quermassintegrals of a convex body over all its translated copies (that contain the origin) leads to a basic geometric invariant in integral geometry, known as \emph{chord integral}. Chord integrals are naturally translation invariant. From an analysis point of view, chord integrals are Riesz potentials of characteristic functions of convex bodies. For isoperimetric problems involving chord integrals, readers should refer to Kn\"{u}pfer-Muratov \cite{MR3055587, MR3272365}, Figalli-Fusco-Maggi-Millot-Morini \cite{MR3322379}, Haddad-Ludwig \cite{https://doi.org/10.48550/arxiv.2209.10540} and the references cited therein.

Recently, the ``derivative'' of chord integrals, called \emph{chord measures}, was obtained in Xi-LYZ \cite{XLYZ}. The Minkowski problem for chord measures was posed and studied in the same paper. It is called the chord Minkowski problem. The chord Minkowski problem includes the classical Minkowski problem and the previously mentioned long-standing Christoffel-Minkowski problem---the latter as a critical limiting case. The $L_p$ extensions of chord measures and the chord Minkowski problem are natural and present many interesting and challenging problems. More details on this will follow. The $L_0$ chord Minkowski problem, in particular, is also known as the \emph{chord log-Minkowski problem} as it contains the unsolved logarithmic Minkowski problem (see, for example, \cite{MR3037788}) as a special case.

Xi-LYZ \cite{XLYZ} solved \emph{completely} the chord Minkowski problem (corresponding to $p=1$) except for the limiting Christoffel-Minkowski problem case and they also demonstrated a sufficient condition for the \emph{$o$-symmetric} case of the chord log-Minkowski problem. Xi-Yang-Zhang-Zhao \cite{XYZZ} solved the $L_p$ chord Minkowski problem for $p>1$ as well as the \emph{$o$-symmetric} case of $0<p<1$. Origin symmetry in the case of $0\leq p<1$ plays an important role in obtaining \emph{a-priori} $C^0$ bounds---even more so in the critical $p=0$ case.

 The purpose of the current paper is to show that the symmetric restriction in both works can be dropped via an approximation scheme from the polytopal case.

 Let $K$ be a convex body in $\rn$. For each $q\geq 0$, the $q$-th chord (power) integral of $K$, denoted by $I_q(K)$, is given by
 \begin{equation}
 	I_q(K) = \int_{\mathscr{L}^n}
|K\cap \ell|^q\, d\ell,
 \end{equation}
 where $|K\cap \ell|$ is the length of the chord $K\cap \ell$ and the integration is with respect to Haar measure on the affine Grassmannian $\mathscr{L}^n$. Chord integrals contain volume and surface area as important special cases:
 \begin{equation}
 	I_0(K)=\frac{\omega_{n-1}}{n\omega_n}S(K),\quad I_1(K)=V(K), \quad I_{n+1}(K)= \frac{n+1}{\omega_n}V(K)^2,
 \end{equation}
 where $\omega_n$ is the volume of the unit ball in $\rn$. In particular, the chord integral $I_q$ for $q\in (0,1)$ can be seen as an interpolation between volume (or, the quermassintegral $W_0$) and surface area (or, the quermassintegral $W_1$). Chord integrals also take the form of Riesz potential, see \eqref{eq 10221}.

 Xi-LYZ \cite{XLYZ} demonstrated that for each $q\geq 0$, the ``derivative'' of the chord integral $I_q(K)$ uniquely defines the chord measure $F_q(K,\cdot)$ on $\sn$:
 \begin{equation}
 \label{eq 10222}
 	\frac{d}{dt}\bigg|_{t=0^+} I_q(K+tL)=\int_\sn h_L(v)dF_q(K,v),
 \end{equation}
 for each pair of convex bodies $K$ and $L$. Here $h_L$ is the support function of $L$. A precise definition of the chord measure $F_q$ can be found in Section \ref{preliminary measure}. It is important to note that the $q=0,1$ cases of \eqref{eq 10222} are classic and in such cases, the chord measure $F_q(K,\cdot)$ recovers surface area measure ($q=1$) and the area measure $S_{n-2}(K,\cdot )$ ($q=0$). In this way, the chord measure $F_q(K,\cdot)$ interpolates between surface area measure and the $(n-2)$-th order area measure.
\vskip 5pt
\noindent \textbf{The chord Minkowski problem}. Given a finite Borel measure $\mu$ on $\sn$, what are the necessary and sufficient conditions on $\mu$ so that there exists a convex body $K$ such that $F_q(K,\cdot)=\mu$?
\vskip 5pt
The chord Minkowski problem recovers the classical Minkowski problem (when $q=1$) and the \emph{long-standing} Christoffel-Minkowski problem (when $q=0$). The chord Minkowski problem for $q>0$ was completely solved in \cite{XLYZ}.

In the past three decades, many classical concepts and results in the theory of convex bodies have been extended to their $L_p$ counterparts. This was initiated by two landmark papers \cite{MR1231704, MR1378681} by Lutwak in the early 1990s where he defined the \emph{$L_p$ surface area measure} fundamental in the now fruitful $L_p$ Brunn-Minkowski theory central in modern convex geometric analysis. It is crucial to point out that such extension is highly nontrivial and often requires new techniques. See, for example, \cite{MR2254308, MR2132298, MR1316557,MR2067123,MR3366854,MR3479715,MR3356071,MR2123199,MR3415694,MR3148545,MR2652209,MR2680490,MR1901250,MR2019226,MR2729006,MR2530600,MR1987375,MR2927377} for a (not even close to exhaustive) list of works in the $L_p$ Brunn-Minkowski theory. In particular, the theory becomes \emph{significantly} harder when $p<1$. These include the critical centro-affine case $p=-n$ and the logarithmic case $p=0$. Isoperimetric inequalities and Minkowski problems in neither case have been fully addressed. In particular, the \emph{log Minkowski problem} (for the \emph{cone volume measure}) has not yet been fully solved. See, for example, Bianchi-B\"or\"oczky-Colesanti-Yang \cite{MR3872853}, Chou-Wang \cite{MR2254308}, Guang-Li-Wang \cite{GLW2022}, Zhu \cite{MR3356071, MR3228445} among many other works. In fact, the $p=0$ case harbors the \emph{log Brunn-Minkowski conjecture} (see, for example, B\"or\"oczky-LYZ \cite{MR2964630})---arguably the most crucial conjecture in convex geometric analysis in the past decade. The log Brunn-Minkowski conjecture has been verified in dimension 2 and in various special classes of convex bodies. See, for example, Chen-Huang-Li-Liu \cite{MR4088419}, Colesanti-Livshyts-Marsiglietti \cite{MR3653949},  Kolesnikov-Livshyts \cite{MR4485961}, Kolesnikov-Milman \cite{MR4438690},  Milman \cite{MJEMS}, Putterman \cite{MR4220744}, Saroglou \cite{MR3370038}. If proven correct, it is much stronger than the classical Brunn-Minkowski inequality.

Motivated by this success, the $L_p$ chord measure was introduced in \cite{XLYZ}. For each $p\in \mathbb{R}$, $q>0$, and convex body $K$ containing the origin in its interior, the $(p,q)$-th chord measure, denoted by $F_{p,q}(K,\cdot)$, is a finite Borel measure on $\sn$ given by
\begin{equation}
	dF_{p,q}(K,\cdot) = h_K^{1-p} dF_q(K,\cdot).
\end{equation}
For $p\leq 1$, since the exponent $1-p$ is nonnegative, the above definition naturally extends to all $K\in \kn$ as long as $o\in K$. We point out that when $q=1$, since the chord measure $F_q$ becomes the surface area measure, the $(p,1)$-th chord measure becomes nothing but the family of $L_p$ surface area measure in the $L_p$ Brunn-Minkowski theory.
\vskip 5pt
 \noindent \textbf{The $L_p$ chord Minkowski problem}. Given $p\in \mathbb{R}$, $q>0$, and a finite Borel measure $\mu$ on $\sn$, what are the necessary and sufficient conditions on $\mu$ so that there exists a convex body $K$ containing the origin (as an interior point if $p>1$) such that $F_{p,q}(K,\cdot)=\mu$?
\vskip 5pt
 When the given measure $\mu$ has a nonnegative density $f$, the $L_p$ chord Minkowski problem reduces to solving the following Monge-Amp\`{e}re type equation on $\sn$:
 \begin{equation}
 \label{eq 10224}
 	h_K^{1-p}{\wt V_{q-1}(K,\nabla h_K)}\det\big(\nabla^2_{\sn}h_K + h_K\delta_{ij}) = {f}.
 \end{equation}
 Here $\nabla^2_\sn h_K$ is the Hessian of $h_K$ on the unit sphere with respect to the standard metric, and $\nabla h_K$ is the Euclidean gradient of $h_K$ that is connected to the spherical gradient $\nabla_\sn h_K$ in the following way:
 \[ \nabla h_K(v) = \nabla_{\sn} h_K(v) + h_K(v)v.\]
We remark at this point that when $q=1$, the $L_p$ chord Minkowski problem reduces to the $L_p$ Minkowski problem.

In \cite{XYZZ}, it was shown that if $p\in (0,1)$, $q>0$, and the given measure $\mu$ is an even measure, then the $L_p$ chord Minkowski problem has an $o$-symmetric solution. The origin-symmetry assumption is heavily utilized there so that \emph{a-priori} bounds can be achieved. If the origin-symmetry assumption is dropped, then the situation is vastly different. In fact, the maximization problem used in \cite{XYZZ} (for the sake of variational approach) is no longer applicable in the general case. Similar to the $L_p$ Minkowski problem for $p<1$, a min-max problem has to be considered---in another word, we are instead searching for a saddle point. The first of our main results is the following:

\begin{theorem}
\label{thm intro 1}
	Let $0<p<1$, $q>0$, and $\mu$ be a finite Borel measure on $\sn$ not concentrated in any closed hemisphere. Then, there exists $K\in \mathcal{K}^n$ with $o\in K$ such that
	\begin{equation}
		F_{p,q}(K,\cdot) = \mu.
	\end{equation}
Moreover, if $\mu$ is a finite discrete measure, then $K$ is a polytope that contains the origin as an interior point.
\end{theorem}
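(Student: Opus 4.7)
The plan is a two-stage strategy. First solve the problem when $\mu$ is a finite discrete measure via a min-max (saddle-point) variational principle (this also yields the ``moreover'' statement). Then recover the general case by weakly approximating $\mu$ with such discrete measures. Without symmetry, directly minimizing $\int h_P^p\,d\mu$ under $I_q(P)=1$ may send the origin to the boundary of the optimal polytope, invalidating the Lagrange-multiplier computation. A min-max in which one also maximizes over the position of the origin captures the translation freedom that symmetry would otherwise kill.

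\emph{Stage 1 (discrete case).} Write $\mu=\sum_{i=1}^N c_i\delta_{v_i}$ with $\{v_i\}$ not contained in any closed hemisphere. For $h\in\R^N_+$ such that $P(h)=\{x:x\cdot v_i\le h_i\}$ is bounded and $n$-dimensional, set
\[
\Phi(h,\xi)=\frac{1}{p}\sum_{i=1}^N c_i(h_i-\xi\cdot v_i)^p,\qquad \xi\in\mathrm{int}\,P(h).
\]
Because $0<p<1$, the map $\xi\mapsto\Phi(h,\xi)$ is strictly concave with $|\nabla_\xi\Phi|\to\infty$ at $\partial P(h)$ (since $(h_i-\xi\cdot v_i)^{p-1}\to\infty$), so it attains its supremum at a unique interior point $\xi^*(h)$. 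The outer problem is to minimize $\Psi(h):=\Phi(h,\xi^*(h))$ subject to $I_q(P(h))=1$. Granted existence of a minimizer $h^*$, put $P^*=P(h^*)$, $\xi^*=\xi^*(h^*)$, and $K=P^*-\xi^*$, so $o\in\mathrm{int}\,K$. The envelope theorem gives $\partial\Psi/\partial h_i=c_i\,h_K(v_i)^{p-1}$, and the variational formula \eqref{eq 10222} identifies $\partial I_q(P(h))/\partial h_i$ with $F_q(P^*,\{v_i\})$. The Lagrange condition therefore reads
\[
c_i=\lambda\,h_K(v_i)^{1-p}F_q(K,\{v_i\})=\lambda\,F_{p,q}(K,\{v_i\}),
\]
so $\mu=\lambda\,F_{p,q}(K,\cdot)$; rescaling $K\mapsto tK$ with $t^{n+q-p}=\lambda^{-1}$ (valid since $n+q-p>0$) absorbs $\lambda$ and yields the desired polytope.

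\emph{Stage 2 (general case).} Choose discrete $\mu_k\rightharpoonup\mu$, each not concentrated in a closed hemisphere and of equal total mass, and let $K_k$ be the Stage-1 solutions. Uniform a-priori bounds on $\mathrm{diam}(K_k)$ and a quantitative nondegeneracy bound allow Blaschke selection to give a Hausdorff-convergent subsequence $K_k\to K$ with $o\in K$. Weak continuity of $K\mapsto F_{p,q}(K,\cdot)$ in this range (as in \cite{XLYZ,XYZZ}), extending to the case $o\in\partial K$ because $1-p>0$ makes $h_K^{1-p}$ continuous up to zeros, yields $F_{p,q}(K,\cdot)=\mu$.

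\emph{Main obstacle.} The crux is the a-priori estimate in both stages: ensuring minimizing sequences in Stage 1 do not lose a facet or collapse, and that the approximating polytopes in Stage 2 neither blow up nor flatten. Without symmetry one must couple an upper diameter bound (from the $I_q$-normalization together with $\mu$ not being concentrated in a hemisphere) with an inradius-type bound coming precisely from the $\xi$-max step of the min-max, which forces $o=\xi^*$ to sit \emph{quantitatively} deep inside $P^*$ rather than merely topologically. This structural role of the $\xi$-maximization is the technical substitute for the origin-symmetry assumption used in \cite{XYZZ}.
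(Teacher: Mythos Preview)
Your two-stage strategy (discrete min--max, then weak approximation) and the variational setup match the paper's exactly; see Lemma~\ref{lemma 1041} and Theorems~\ref{thm discrete'} and~\ref{thm main thm p neq 0}. One small slip: $F_{p,q}$ is homogeneous of degree $n+q-p-1$, not $n+q-p$, so the rescaling exponent needs correction.

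The substance, as you yourself flag, lies in the a-priori bounds, and here the proposal remains a plan rather than a proof; moreover, your sketch of where the bounds should come from is partly off. For Stage~1 the paper's argument (Theorem~\ref{thm discrete'}) is simpler than you suggest: translate so that $\xi_p(z^l)=o$, compare $\Phi_p(z^l,o)$ with $\Phi_p$ of a fixed test polytope to bound it above, and then use the one-term inequality $\Phi_p(z^l,o)\ge(\max_j z_j^l)^p\min_j\alpha_j$ to bound $\max_j z_j^l$. Non-concentration of $\mu$ is not needed here because $\min_j\alpha_j>0$ for a discrete measure; the $I_q$-constraint then prevents collapse in the limit, and Lemma~\ref{lemma phi has unique maximizer} places $o$ in the interior. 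For Stage~2, your intuition that the $\xi$-maximization forces the origin ``quantitatively deep'' inside $P_m$ is \emph{not} what the paper proves, and establishing such a uniform depth estimate directly would be delicate. Instead (Lemma~\ref{lemma 1052}) the paper exploits the explicit scaling relation~\eqref{eq 10410} between $P_m$ and the normalized minimizer $[z^m]$: once $[z^m]$ is shown uniformly bounded---this is where non-concentration enters, via Lemma~\ref{lemma uniform control of Lp}, which gives $\int_{\sn}(u\cdot v)_+^p\,d\overline{\mu}_m\ge\tfrac12\mathfrak{C}_p(\mu)$ uniformly in $m$---the constraint $I_q([z^m])=1$ forces $[z^m]$ to contain a ball of fixed radius, so $\Phi_{p,\overline{\mu}_m}(z^m,o)$ is bounded below, and the scaling factor in~\eqref{eq 10410} then bounds $I_q(P_m)$ below.
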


To prove Theorem \ref{thm intro 1}, we first establish the case when $\mu$ is discrete. This is contained in Theorem \ref{thm discrete'}. The polytopal solutions are then used to obtain the general solution via an approximation scheme (Theorem \ref{thm main thm p neq 0}). In particular, Theorem \ref{thm intro 1} contains the solution to the $L_p$ Minkowski problem when $0<p<1$ previously obtained in Zhu \cite{MR3352764} and Chen-Li-Zhu \cite{MR3680945}.

When $p=0$, (up to a constant) the $L_0$ chord measure is also known as the \emph{cone chord measure} $G_q$:
\begin{equation}
	G_q(K,\cdot) = \frac{1}{n+q-1} F_{0,q}(K,\cdot).
\end{equation}
The special normalization is so that $G_q(K,\sn) = I_q(K)$. See Section \ref{preliminary measure} for details. We remark that the cone chord measure $G_1(K,\cdot)$ is equal to the cone volume measure $V_K$ sitting at the center of the aforementioned log-Brunn-Minkowski conjecture. Recall that the Minkowski problem for cone volume measure is known as the log Minkowski problem.
For this reason, we also refer to the $L_0$ chord Minkowski problem as the chord log-Minkowski problem.

It turns out that the chord log-Minkowski problem is connected to a subspace mass inequality.
Let $1<q<n+1$. We say that a given finite Borel measure $\mu$ satisfies the \emph{subspace mass inequality} if
\begin{equation}
\label{eq 1068}
	\frac{\mu(\xi_i\cap \sn)}{|\mu|} <\frac{i+\min\{i, q-1\}}{n+q-1},
\end{equation}
for each $i$ dimensional subspace $\xi_i\subset \rn$ and each $i=1, \dots, n-1$.

It was shown in \cite{XLYZ} that with the additional assumption that $\mu$ is even, \eqref{eq 1068} is sufficient to guarantee an $o$-symmetric solution $K\in \ko$ such that $\mu = G_q(K,\cdot)$. We show in the current work that the symmetric assumption can be removed. We remark at this point that as \eqref{eq 1068} demonstrates, the chord log-Minkowski problem with general $\mu$ is much more complicated than its special case when $\mu$ is absolutely continuous (\emph{i.e.,} equation \eqref{eq 10224}). Indeed, if $\mu$ is absolutely continuous, then its mass in any proper subspace is 0 and therefore the subspace mass inequality \eqref{eq 1068} is trivially satisfied.

To solve the chord log-Minkowski problem, we first prove the polytopal case when the given normal vectors are in \emph{general position}. Polytopes possessing this special feature have the additional property that if they blow up (collapse, resp.), then they have to blow up (collapse, resp.) in a uniform fashion. This will make it easier to obtain uniform \emph{a-priori} bounds. Vectors in general position and polytopes with normals in general position will be discussed in Section \ref{sec general position}. Using this, we will show

\begin{theorem}
\label{thm intro discrete}
	Let $q>0$, and $\mu$ be a discrete measure on $\rn$ whose support set is not contained in any closed hemisphere and is in general position in dimension $n$. Then there exists a polytope $P$ containing the origin in its interior such that
	\begin{equation}
		G_q(P,\cdot)=\mu.
	\end{equation}
\end{theorem}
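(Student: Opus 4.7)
The plan is to solve the constrained maximization
$$ \sup\Bigl\{\, \textstyle\int_{\sn} \log h_{P-x_0}(v)\, d\mu(v)\,:\, I_q(P) = |\mu|,\ x_0 \in \Int P\,\Bigr\}, $$
over polytopes $P$ whose facet normals are contained in $\supt\mu = \{v_1,\ldots,v_N\}$. This is the natural log--Minkowski functional for the chord-integral setting, with an inner optimization over translations $x_0$ built in to handle the non-$o$-symmetric case. Writing $P(\mathbf{h}) = \bigcap_{k=1}^N \{x \in \rn : x \cdot v_k \leq h_k\}$ and $\mu = \sum_{k=1}^N c_k \delta_{v_k}$, one seeks a vector $\mathbf{h} = (h_1,\ldots,h_N)$ with every $h_k > 0$ (equivalently, $o \in \Int P(\mathbf{h})$) maximizing $\Phi(\mathbf{h})=\sum_k c_k\log h_k$ subject to $I_q(P(\mathbf{h}))=|\mu|$.

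Assuming such a maximizer exists, the Euler--Lagrange equation follows from the variational formula \eqref{eq 10222} applied to the logarithmic family $h_k \mapsto h_k e^{t\alpha_k}$, combined with the identity $dG_q(K,\cdot) = \tfrac{1}{n+q-1}\, h_K \, dF_q(K,\cdot)$. One obtains $c_k = (n+q-1)\lambda\, G_q(P,\{v_k\})$; summing over $k$ and using $I_q(P)=|\mu|$ forces $\lambda = 1/(n+q-1)$, so $G_q(P,\cdot)=\mu$ as desired. Every $h_k$ is positive by construction, so $o \in \Int P$, as required by the conclusion.

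The main obstacle---and the precise reason for the ``general position'' hypothesis---is to produce \emph{a-priori} estimates ensuring that a maximizing sequence $\mathbf{h}^{(j)}$ does not degenerate, i.e., that $\min_k h_k^{(j)}$ is bounded below and $\max_k h_k^{(j)}$ bounded above uniformly in $j$. In the $o$-symmetric setting of \cite{XLYZ,XYZZ}, origin-symmetry pins the origin to the centroid and delivers such control for free. Here, the general-position hypothesis is the substitute: it guarantees that every vertex of $P(\mathbf{h})$ is the intersection of exactly $n$ facet hyperplanes with linearly independent normals, so that the combinatorial type of $P$ is rigid and any potential degeneration must be uniform across coordinates rather than isolated to a single direction. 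The upper bound on $\max_k h_k$ follows from $I_q(P) = |\mu|$ combined with a standard chord-integral comparison after an appropriate translation; the lower bound then comes from the fact that $h_k \to 0$ drives $\Phi \to -\infty$, and the general-position input is precisely what prevents a small $h_k$ from being compensated by large $h_j$'s elsewhere. With these estimates in hand, extracting a convergent subsequence and passing to the limit in the Euler--Lagrange equation is then routine.
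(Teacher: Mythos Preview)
Your variational setup has the right ingredients---the logarithmic functional, the constraint $I_q=|\mu|$, the inner optimization over translations, and the correct Euler--Lagrange computation---but the direction of the outer optimization is wrong, and this is a genuine gap. You propose to \emph{maximize} $\Phi(\mathbf{h}) = \sum_k c_k \log h_k$ over $\mathbf{h} \in \mathbb{R}_+^N$ subject to $I_q(P(\mathbf{h})) = |\mu|$, but this supremum is $+\infty$ whenever $N\geq n+2$: if some constraint is redundant (i.e.\ $h_{k_0} > h_{P(\mathbf{h})}(v_{k_0})$, which one can always arrange), then sending $h_{k_0} \to +\infty$ leaves $P(\mathbf{h})$ and hence $I_q$ unchanged while $\Phi(\mathbf{h}) \to +\infty$. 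Your diameter bound via general position controls the support values $h_{P(\mathbf{h})}(v_k)$, not the defining parameters $h_k$; your two formulations (the geometric one over $(P,x_0)$ and the coordinate one over $\mathbf{h}$) are not equivalent precisely because of this slack. The claim that general position makes the combinatorial type ``rigid'' is also inaccurate: even with normals in general position, $P(\mathbf{h})$ can have fewer than $N$ facets, so even in your geometric formulation there is no guarantee that the maximizer has a facet in every direction $v_k$.

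The paper instead runs a \emph{min--max}: an inner supremum over the translation point $\xi$ (your $x_0$), followed by an outer \emph{infimum} over $z$ under the constraint $I_q([z]) = |\mu|$. The outer minimization is exactly what forces every constraint to be active at the optimum---if $z_{k_0} > h_{[z]}(v_{k_0})$, lowering $z_{k_0}$ to the support value keeps $[z]$ and $I_q$ fixed but strictly decreases $\Phi$, contradicting minimality (Lemma~\ref{lemma 1041}, equation~\eqref{eq 791}). Activity of all $N$ constraints is precisely what makes the Euler--Lagrange system yield $G_q(P,\{v_k\}) = c_k$ for every $k$; without it, $F_q(P,v_{k_0}) = 0$ and the system cannot match a positive $c_{k_0}$. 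The general-position hypothesis then enters only to bound the outer radius of a minimizing sequence via Corollary~\ref{coro C0 estimate}; the lower bound on the $z_k$ comes not from ``$h_k\to 0$ drives $\Phi\to -\infty$'' (which would argue \emph{against} a minimum) but from the fact that the inner maximizer $\xi_p(z)$ lies in $\Int[z]$ and is continuous in $z$ (Lemmas~\ref{lemma phi has unique maximizer} and~\ref{lemma continuity of xi}). The fix is to swap your outer sup for an inf and argue as in Lemma~\ref{lemma 1041} and Theorem~\ref{thm discrete}.
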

Theorem \ref{thm intro discrete} is implied by Theorem \ref{thm discrete} and the homogeneity of $G_q(P,\cdot)$ in $P$.

Section \ref{sec general p=0} is devoted to using Theorem \ref{thm intro discrete} and an approximation scheme to show:

\begin{theorem}
\label{thm intro 2}
	Let $1<q<n+1$. If $\mu$ is a finite Borel measure on $\sn$ that satisfies \eqref{eq 1068}, then there exists a convex body $K\in \mathcal{K}^n$ with $o\in K$ such that
	\begin{equation}
		G_q(K,\cdot)=\mu.
	\end{equation}
\end{theorem}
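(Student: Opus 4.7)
The plan is to deduce Theorem \ref{thm intro 2} from the polytopal Theorem \ref{thm intro discrete} by approximating the measure $\mu$ with finite discrete measures in general position and passing to a limit via uniform a priori bounds. First, I would approximate $\mu$ weakly by a sequence of finite discrete measures $\{\mu_k\}$ on $\sn$ whose supports are in general position in $\rn$; since general position is a generic condition, this can be arranged by small perturbations of any weakly convergent discretization, and for $k$ large $\mu_k$ inherits from $\mu$ both the non-concentration in any closed hemisphere and a strict form of the subspace mass inequality \eqref{eq 1068}, with slack vanishing as $k\to\infty$. Applying Theorem \ref{thm intro discrete} then yields polytopes $P_k$ with $o\in\Int(P_k)$ and $G_q(P_k,\cdot)=\mu_k$.

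Once uniform a priori bounds on the $P_k$ are available, the Blaschke selection theorem produces a Hausdorff-convergent subsequence $P_k\to K\in\kn$. Since each $P_k$ contains $o$, so does $K$, and the weak continuity of $G_q(\cdot,\cdot)$ in its body argument, a foundational result from \cite{XLYZ}, allows passage to the limit on both sides of $G_q(P_k,\cdot)=\mu_k$ to conclude $G_q(K,\cdot)=\mu$. This portion of the argument is routine; the genuine content lies in the uniform a priori bound.

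The a priori bound is to be established by contradiction. Suppose the circumradii $R_k$ of $P_k$ diverge along a subsequence. Rescaling $Q_k=P_k/R_k$ and extracting a Hausdorff limit $Q_\infty$, the degeneracy forces $Q_\infty$ to lie in a slab around some proper subspace $\xi_i\subset\rn$ of dimension $i\in\{1,\dots,n-1\}$, so the outer unit normals of the $P_k$ cluster near $\xi_i\cap\sn$. Using the definition $G_q(P_k,\cdot)=\frac{1}{n+q-1}h_{P_k}\,dF_q(P_k,\cdot)$, the identity $G_q(P_k,\sn)=I_q(P_k)$, and a careful slicing of $P_k$ into chords parallel to $\xi_i$ versus its orthogonal complement, one expects to derive a lower bound of the form
\begin{equation}
\liminf_{k\to\infty}\frac{\mu_k(\sn\cap\xi_i)}{|\mu_k|}\ge \frac{i+\min\{i,q-1\}}{n+q-1},
\end{equation}
which contradicts the strict subspace mass inequality \eqref{eq 1068} satisfied in the limit by $\mu$. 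An analogous dual argument rules out the opposite degeneration in which $o$ approaches the boundary of $P_k$ and the body becomes thin in some transverse direction.

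The principal obstacle is carrying out this localization estimate without the origin-symmetry crutch of \cite{XLYZ}. In the symmetric case, the origin sits at the center of the body, so the long and short axes of a degenerating polytope are pinned relative to the support function; in our setting $o$ may drift near $\partial P_k$, so one must simultaneously track the shape of $P_k$, the position of the origin inside it, and the distribution of outer normals. This typically requires splitting the faces of $P_k$ according to whether their outer normals are near $\xi_i\cap\sn$, estimating the contributions to $h_{P_k}\,dF_q(P_k,\cdot)$ from each piece, and combining this with precise estimates of chord lengths through $P_k$ in the parallel and transverse directions. The general position hypothesis behind Theorem \ref{thm intro discrete} is essential here: it prevents pathological alignments of normals that would otherwise obstruct the dimension counting at the heart of \eqref{eq 1068}.
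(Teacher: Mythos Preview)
Your high-level plan---approximate $\mu$ by discrete measures supported in general position, invoke the discrete theorem, and pass to the limit via Blaschke selection and weak continuity of $G_q$---matches the paper's. The divergence, and the genuine gap, is in the a priori upper bound on the approximating polytopes.

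The paper's bound is \emph{variational}, not geometric in your sense. The polytopes $P_m$ are not used as black-box solutions of $G_q(P_m,\cdot)=\overline{\mu}_m$; they are the specific solutions produced by the min-max problem in Section~\ref{sec discrete}, namely (rescalings of) minimizers of the entropy functional $\Phi_{0,\overline{\mu}_m}(z,\xi_{0,\overline{\mu}_m}(z))$ under the constraint $I_q([z])=|\mu|$. This yields an \emph{upper} bound on $\Phi_{\overline{\mu}_m}(P_m,o)$ by comparison with a ball (Lemma~\ref{lemma bound phi}). On the other side, the paper proves a \emph{lower} bound on $\Phi_{\overline{\mu}_m}(P_m,o_m)$ by inscribing the John ellipsoid, partitioning $\sn$ according to its principal directions, and using the subspace mass inequality (transferred to $\overline{\mu}_m$ via Lemma~\ref{lemma discrete subspace mass inequality}) together with the ellipsoid estimate Lemma~\ref{lemma bound on chord integral}. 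The lower bound contains a term $t_0\log r_{n,m}$ that diverges if $P_m$ is unbounded, contradicting the upper bound. The integer values of $q$ require the additional interpolation Lemma~\ref{lemma monotonicity in q}.

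Your route instead attempts to deduce, directly from $G_q(P_k,\cdot)=\mu_k$ and a rescaling/degeneration picture, that
\[
\liminf_{k\to\infty}\frac{\mu_k(\sn\cap\xi_i)}{|\mu_k|}\ \ge\ \frac{i+\min\{i,q-1\}}{n+q-1}.
\]
This is tantamount to a sharp necessary subspace concentration inequality for cone chord measures of non-symmetric bodies, which the paper neither proves nor cites, and which you have not established; ``one expects to derive'' is exactly the missing step. By invoking Theorem~\ref{thm intro discrete} only as an existence statement you have thrown away the minimizing property of $P_m$, and with it the mechanism the paper actually uses. Finally, your ``dual argument'' to prevent $o$ from reaching $\partial P_k$ is unnecessary: the statement allows $o\in\partial K$, and non-degeneracy of the limit follows immediately from $I_q(P_m)=G_q(P_m,\sn)=|\overline{\mu}_m|/(n+q-1)\to|\mu|/(n+q-1)>0$ (Lemma~\ref{lemma 1066}).
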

We remark that Theorem \ref{thm intro discrete} and Theorem \ref{thm intro 2} extend the previously obtained results on the log Minkowski problem in Zhu \cite{MR3228445} and Chen-Li-Zhu \cite{MR3896091}.

\section{Preliminaries}

In this section, we gather notations and results needed in subsequent sections.

\subsection{Basics of convex bodies}

The central objects in study in convex geometry are convex bodies which are nothing but compact convex sets in $\rn$ with non-empty interiors. It is important to note that we require no additional regularity other than convexity of the set. We will write $\kn$ for the set of all convex bodies in $\rn$. The symbols $\ko$ will be used for the subclass of $\kn$ that contains convex bodies that have the origin in their interiors and $\ke$ will be used for the subclass of $o$-symmetric convex bodies. We write $\omega_n$ for the volume of the unit ball in $\rn$. We will also use the notation $|\mu|$ for the total mass of a measure $\mu$. For each positive integer $N$, the symbol $\mathbb{R}^N_+$ stands for the collection of points in $\mathbb{R}^N$ with positive coordinates; that is, 
\begin{equation*}
	\mathbb{R}_+^N=\{(x_1,\dots, x_N): x_i>0\}.
\end{equation*}

Readers should consult the classical volume \cite{MR3155183} by Schneider for details of the results covered in this section.

A compact convex set $K$ is uniquely determined by its support function $h_K: \sn\rightarrow \mathbb{R}$ given by
\begin{equation*}
	h_K(v) = \max_{x\in K} x\cdot v.
\end{equation*}
It is worth noting that the support function can be trivially extended to $\rn$ as a $1$-homogeneous function and it is convex.

Let $K\in \kn$ and $x\in \rn$. The radial function of $K$ with respect to $x$, denoted by $\rho_{K,x}: \sn \rightarrow \mathbb{R}$ can be written as
\begin{equation}
	\rho_{K,x} (u)=\max \{t: tu+x\in K\}.
\end{equation}
It is simple to see that when $x\in \Int K$, we have that $\rho_{K,x}$ is a positive continuous function on $\sn$. For simplicity, we write $\rho_K=\rho_{K,o}$.

We will use $\nu_K: \partial K\rightarrow \sn$ to denote the Gauss map of $K$. In particular, the convexity of $K$ implies that $\nu_K$ is almost everywhere defined on $\partial K$.

Since all support functions have to be convex, it is obvious that not all functions on $\sn$ are support functions of convex bodies. However, the so-called \emph{Wulff shape} or \emph{Aleksandrov body} connects continuous functions defined on subsets of $\sn$ to convex bodies. In particular, let $\Omega\subset \sn$ be a subset that is not entirely contained in any closed hemisphere and $f:\Omega\rightarrow [0,\infty)$ be a continuous function. The Wulff shape $[f,\Omega]$ is defined to be
\begin{equation}
	[f,\Omega] = \{x\in \rn: x\cdot v\leq f(v), \forall v\in \Omega\}.
\end{equation}
It is clear that $[f,\Omega]$ is convex and compact. Moreover when $f>0$, the Wulff shape $[f,\Omega]$ contains the origin as an interior point. For simplicity, when the context is clear, we shall write $[f]$ without explicitly mentioning $\Omega$. It is simple to see that
\begin{equation}
	\label{eq 6151}h_{[f]}\leq f.
\end{equation}
It is important that the above inequality may very well be strict for many $f$. A critical observation regarding Wulff shape is that for almost all $x\in \partial [f]$, the normal vector $\nu_{[f]}(x)\in \Omega$.

Let $K_n$ be a sequence of compact convex sets in $\rn$. We say that $K_n$ converges to $K$ in \emph{Hausdorff metric} if $\|h_{K_n}-h_K\|_{\infty}\rightarrow0$ as $n\rightarrow \infty$. We shall use frequently the fact that if $f_i\in C(\Omega)$ converges to $f\in C(\Omega)$ uniformly, then $[f_i]\rightarrow [f]$ in Hausdorff metric.

When $\Omega=\{v_1,\dots, v_N\}$ is a finite set not contained in any closed hemisphere, we will slightly abuse our notation and for each $z=(z_1,\dots, z_N)\in \mathbb{R}^N$, write
\begin{equation}
	[z,\Omega] = \{x\in \rn: x\cdot v_i\leq z_i, \quad i=1,\dots, N\}.
\end{equation}
When the context is clear, we shall write $[z]$ for simplicity.
We will write $\mathcal{P}(v_1,\dots, v_N)$ for the collection of convex bodies generated in this fashion. Specifically, the set $\mathcal{P}(v_1,\dots, v_N)$ contains all polytopes in $\rn$ whose normals to facets are contained in $\{v_1,\dots, v_N\}$.

A special collection of polytopes are those whose facet normals are in \emph{general position} in dimension $n$. We say $v_1,\dots, v_N$ are in general position in dimension $n$ if for any $n$-tuple $1\leq i_1<i_2<\dots<i_n\leq N$, the vectors $v_{i_1}, \dots, v_{i_n}$ are linearly independent. In Section \ref{sec general position}, we will show that for polytopes whose normals are in general position, if they grow in size, then they have to grow uniformly.
\subsection{Invariants in integral geometry} \label{preliminary invariants}In this subsection, we gather notions from integral geometry. Readers are referred to the books \cite{MR2162874} by Santal\`{o} and \cite{MR1336595}  by Ren.

In the classical Brunn-Minkowski theory of convex bodies, \emph{quermassintegrals} $W_0, W_1,\dots, W_n$ are fundamental geometric invariants that include volume, surface area, and mean width as important special cases. They arise in many different ways. One way to see them is as coefficients of the \emph{Steiner formula} fundamental in the classical Brunn-Minkowski theory (see Section 4.2 in \cite{MR3155183}). It also naturally arises from an integral geometry point of view. The quermassintegrals $W_{n-i}$ can be defined as
\begin{equation}
	W_{n-i}(K) = \frac{\omega_n}{\omega_i} \int_{\xi \in G_{n,i}} \mathcal{H}^{i}(K|{\xi})d\xi,
\end{equation}
where $G_{n,i}$ is the Grassmannian manifold containing all $i$ dimensional subspaces of $\rn$, the set $K|\xi$ is the image of the orthogonal projection of $K$ onto $\xi$, and the integration is with respect to the Haar measure in $G_{n,i}$. Quermassintegrals satisfy the  \emph{fundamental kinematic formula}; see (4.54) in \cite{MR3155183}. With sufficient regularity assumptions on the boundary of the convex body, quermassintegrals are integrals of elementary symmetric polynomials of principal curvatures of the body.

While quermassintegrals are heavily connected to boundary shape and orthogonal projection areas of convex bodies, \emph{dual quermassintegrals} fundamental in the dual Brunn-Minkowski theory are related to interior properties and central sectional areas of convex bodies. They arise naturally as coefficients of the \emph{dual Steiner formula} (see Section 9.3 in \cite{MR3155183}). From an integral geometric point of view, for each $K\in \ko$, the dual quermassintegrals of $K$ can be defined as
\begin{equation}
	\widetilde{W}_{n-i}(K) = \frac{\omega_n}{\omega_i} \int_{\xi \in G_{n,i}} \mathcal{H}^{i}(K\cap{\xi})d\xi.
\end{equation}
It was shown in Zhang \cite{MR1443203} that the dual quermassintegrals enjoy a kinematic formula dual to the fundamental kinematic formula. Using polar coordinates, it is not hard to show that dual quermassintegrals satisfy an integral representation via radial functions:
\begin{equation}
	\widetilde{W}_{n-i}(K)=\frac{1}{n}\int_{\sn}\rho_K^i(u)du,
\end{equation}
which allows an immediate extension from $\widetilde{W}_{n-i}(K)$ to $\widetilde{W}_{n-q}(K)$ for each $q\in \mathbb{R}$. It is apparent that, unlike quermassintegrals, dual quermassintegrals are \emph{not} translation invariant in $K$. Therefore, we may define for each $z\in K$ and $q\in \mathbb{R}$:
\begin{equation}
	\widetilde{W}_{n-q}(K,z) = \frac{1}{n}\int_{S_z^+}\rho_{K,z}^q(u)du,
\end{equation}
where $S_z^+=\{u\in \sn: \rho_{K,z}(u)>0\}$. Note that when $z\in \Int K$, we have $S_z^+=\sn$. For the sake of notational simplicity, we will write $\widetilde{V}_{q}(K,z)=\widetilde{W}_{n-q}(K,z)$.

The integrals of dual quermassintegrals with respect to $z\in K$ naturally give rise to translation invariant quantities. These are known as \emph{chord integrals} in integral geometry. More specifically, let $q\geq 0$ and $K\in \kn$, the $q$-th chord (power) integral of $K$ is given by
\begin{equation}
 	I_q(K) = \int_{\mathscr{L}^n}
|K\cap \ell|^q\, d\ell,
 \end{equation}
where $|K\cap \ell|$ is the length of the chord $K\cap \ell$ and the integration is with respect to Haar measure on   $\mathscr{L}^n$ which denotes the  affine Grassmannian of lines ($1$-dimensional affine subspaces). For $q>0$, the chord integral can be written as the integral of dual quermassintegrals in $z\in K$:
\begin{equation}
	I_q(K)=\frac{q}{\omega_n} \int_K \widetilde{V}_{q-1}(K,z)dz.
\end{equation}
In analysis, chord integral can be recognized as Riesz potential: for each $q>1$, we have
\begin{equation}
\label{eq 10221}
	I_q(K) = \frac{q(q-1)}{n\omega_n} \int_K\int_K \frac{1}{|x-z|^{n+1-q}}dxdz.
\end{equation}
Aside from translation invariance, we shall make frequent use of the fact that $I_q$ is homogeneous of degree $n+q-1$, \textit{i.e., } $I_q(tK)=t^{n+q-1}I_q(K)$ for $t>0$. For $q\geq0$, there is an obvious extension of $I_q$ to the set of all compact convex subsets of $\rn$ and $I_q$ is a continuous functional with respect to the Hausdorff metric. The proof of these facts can be found in, for example, \cite{XLYZ}.

\subsection{$L_p$ chord measures}
\label{preliminary measure}

 In the landmark paper \cite{XLYZ}, a new family of geometric measures in the setting of integral geometry, called \emph{chord measures}, was defined. Let $K\in \kn$ and $q>0$, the chord measure $F_q(K,\cdot)$ is a finite Borel measure on $\sn$ given by
 \begin{equation}
 	F_q(K,\eta)= \frac{2q}{\omega_n}\int_{\nu_K^{-1}(\eta)} \widetilde{V}_{q-1}(K,z)d\mathcal{H}^{n-1}(z), \qquad \text{for each Borel }\eta\subset \sn.
 \end{equation}
 If $K$ is a polytope, its chord measure becomes a discrete measure that is concentrated on the set of facet normals of $K$. On the other side, when $K$ is $C^{2,+}$, the chord measure $F_q(K,\cdot)$ is absolutely continuous with respect to the spherical Lebesgue measure:
 \begin{equation}
 	dF_q(K,v) = \frac{2q}{\omega_n} \widetilde{V}_{q-1}(K, \nabla h_K) \det (\nabla^2_{\sn} h_K+h_KI) dv.
 \end{equation}

 Chord measures naturally appear when one differentiates in a certain sense the chord integral $I_q$. Particularly,
 \begin{theorem}[Theorem 5.5 in \cite{XLYZ}]
	\label{theorem variational formula}
	Let $q>0$, and $\Omega$ be a compact subset of $\sn$ that is not contained in any closed hemisphere. Suppose that $g:\Omega\rightarrow \mathbb{R}$ is continuous and $h_t:\Omega\rightarrow (0,\infty)$ is a family of continuous functions given by
	\begin{equation}
		h_t = h_0+tg +o(t,\cdot),
	\end{equation}
	for each $t\in (-\delta,\delta)$ for some $\delta>0$. Here $o(t,\cdot)\in C(\Omega)$ and $o(t,\cdot)/v$ tends to $0$ uniformly on $\Omega$ as $t\rightarrow 0$. Let $K_t$ be the Wulff shape generated by $h_t$ and $K$ be the Wulff shape generated by $h_0$. Then,
	\begin{equation}
		\left.\frac{d}{dt}\right|_{t=0} I_q(K_t) = \int_{\Omega} g(v)dF_q(K,v).
	\end{equation}
\end{theorem}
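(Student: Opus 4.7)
The plan is to reduce the general Wulff shape variational formula to the special case of Minkowski perturbations, for which the chord measure $F_q(K,\cdot)$ is already characterized by the defining relation \eqref{eq 10222}. The reduction proceeds in three stages: extending $g$ and $h_0$ from $\Omega$ to all of $\sn$; decomposing the extended function as a difference of support functions so that \eqref{eq 10222} applies directly; and showing that the higher order remainder $o(t,\cdot)$ does not contribute to the first order derivative.

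First, I would extend $h_0$ and $g$ to continuous functions $\bar h_0, \bar g$ on $\sn$ via Tietze's theorem, arranging that $\bar h_0 > h_K$ strictly on $\sn\setminus\Omega$ (possible because $h_0$ is continuous on the compact set $\Omega$ and $K=[h_0,\Omega]$ has outer normals only in $\Omega$). Then $[\bar h_0,\sn] = [h_0,\Omega]=K$, and for $t$ in a small neighborhood of $0$ the extra constraints coming from directions in $\sn\setminus\Omega$ remain inactive, so that $[\bar h_t,\sn]=[h_t,\Omega]$. Next, pick a constant $C$ so large that both $\bar g + C$ and $C$ are positive continuous support functions on $\sn$ (the latter being the support function of $CB$ where $B$ is the unit ball); the corresponding convex bodies $L_1, L_2 \in \kn$ then satisfy $h_{L_1}-h_{L_2}=\bar g$. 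Applying \eqref{eq 10222} to $L_1$ and $L_2$ and subtracting yields
\[
\left.\frac{d}{dt}\right|_{t=0^+} I_q(K+tL_1) - \left.\frac{d}{dt}\right|_{t=0^+} I_q(K+tL_2) = \int_\sn \bar g(v)\,dF_q(K,v),
\]
and since $F_q(K,\cdot)$ is supported on outer normals of $K$ (hence on $\Omega$), the right hand side equals $\int_\Omega g(v)\,dF_q(K,v)$.

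The main obstacle is the third stage: an Aleksandrov-type Wulff shape lemma showing that the derivative of $I_q$ depends only on the first order part of the perturbation. Concretely, the goal is
\[
\lim_{t\to 0^+}\frac{I_q([h_t]) - I_q([h_K + tg])}{t} = 0.
\]
Three ingredients support this: (i) continuity of the Wulff shape operator $f \mapsto [f]$ in $\|\cdot\|_\infty$, yielding $[\bar h_t]\to K$ in Hausdorff metric; (ii) the bound $d_H([\bar h_t],[h_K+t\bar g]) = o(t)$, which follows from $\|\bar h_t - (h_K+t\bar g)\|_\infty = o(t)$ together with the uniform Lipschitz property of the Wulff shape map; and (iii) a local Lipschitz estimate $|I_q(K')-I_q(K'')|\le C\,d_H(K',K'')$ for $K',K''$ in a bounded family, with $C$ controlled by the total chord measure of nearby bodies. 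Combining (ii) and (iii) forces the limit above to vanish, and connecting $[h_K+tg]$ to the Minkowski sum $K + t(L_1 - L_2)$ (in the sense of support functions) via the same Lipschitz estimates closes the argument.
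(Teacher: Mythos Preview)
The paper does not prove this statement; it is quoted from \cite{XLYZ} (Theorem~5.5 there), with a remark that the passage from $\Omega=\sn$ to general compact $\Omega$ needs only the observation that $\nu_{[h]}(x)\in\Omega$ for a.e.\ $x\in\partial[h]$, and that a complete proof in this form is given in the Appendix of \cite{XYZZ}. The proof in \cite{XLYZ} does not reduce to the Minkowski-sum formula \eqref{eq 10222}; rather it differentiates the chord integral $I_q$ directly from its integral representation (via dual quermassintegrals over the body, or via the parametrization of chords by boundary points and directions) and identifies the result with the explicit definition $F_q(K,\eta)=\frac{2q}{\omega_n}\int_{\nu_K^{-1}(\eta)}\widetilde V_{q-1}(K,z)\,d\mathcal H^{n-1}(z)$. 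In that approach \eqref{eq 10222} is a \emph{corollary} of the variational formula, not an input to it.

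Your strategy is different and not unreasonable, but stage three has a genuine gap. Writing $\bar g=h_{L_1}-h_{L_2}$ and subtracting the two instances of \eqref{eq 10222} gives you
\[
\left.\frac{d}{dt}\right|_{t=0^+} I_q(K+tL_1)\;-\;\left.\frac{d}{dt}\right|_{t=0^+} I_q(K+tL_2)\;=\;\int_\Omega g\,dF_q(K,\cdot),
\]
but this is \emph{not} $\frac{d}{dt}\big|_{t=0}I_q([h_K+t\bar g])$. The Wulff-shape operator is not linear, so there is no a~priori reason for $g\mapsto \frac{d}{dt}\big|_{t=0}I_q([h_K+tg])$ to be additive in $g$; establishing that additivity is exactly the Aleksandrov-type lemma you are trying to prove. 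Your closing sentence about ``the Minkowski sum $K+t(L_1-L_2)$'' is not a well-defined object, and the Lipschitz bound $|I_q(K')-I_q(K'')|\le C\,d_H(K',K'')$ yields only an $O(t)$ comparison between $I_q([h_K+t\bar g])$ and $I_q(K+tL_i)$, whereas you need $o(t)$. A rescue is possible---for instance via the sandwich $[h_K+t\bar g]+tL_2\subset K+tL_1$ for $t>0$ together with its mirror for $t<0$, combined with the differentiability of $s\mapsto I_q(M+sL_2)$ uniformly over $M$ near $K$---but this uniformity is the crux and you have not supplied it. Also note that \eqref{eq 10222} is stated only as a one-sided derivative, so the two-sided statement you need already requires extra work even in the Minkowski-sum case.
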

\begin{remark}
	Note that the above quoted Theorem is slightly different from Theorem 5.5 in \cite{XLYZ}. Indeed, the domain of $g$ in Theorem 5.5 in \cite{XLYZ} is $\sn$ and is changed to $\Omega$ here. Despite the change, the proof, however, works for any $\Omega$ without any essential changes once we realize the fact that if $h:\Omega\rightarrow \mathbb{R}$, then for almost all $x\in \partial [h]$, we have $\nu_{[h]}(x)\in \Omega$. In this exact quoted form, a proof of Theorem \ref{theorem variational formula} can be found in the Appendix of \cite{XYZZ}.
\end{remark}

In the discrete case, Theorem \ref{theorem variational formula} becomes the following.
\begin{coro}
\label{coro discrete variational formula}
	Let $v_1, \dots, v_N$ be $N$ unit vectors that are not contained in any closed hemisphere and $z=(z_1,\dots, z_N)\in (\mathbb{R}_+)^N$. Let $\beta=(\beta_1,\dots, \beta_N)\in \mathbb{R}^N$. For sufficiently small $|t|$, consider $z(t)=z+t\beta$ and
	\begin{equation}
		P_t = [z(t)]=\bigcap_{i=1}^N \{x\in \rn: x\cdot v_i\leq z_i(t)=z_i+t\beta_i\}.
	\end{equation}
	Then, for $q>0$, we have
	\begin{equation}
		\frac{d}{dt}\Big|_{t=0} I_q(P_t)= \sum_{i=1}^N \beta_i F_q(P_0, v_i).
	\end{equation}
\end{coro}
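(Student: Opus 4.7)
The plan is to derive this corollary as a direct specialization of Theorem \ref{theorem variational formula} to the finite setting, taking $\Omega := \{v_1, \dots, v_N\}$. First I would define $h_0:\Omega\to(0,\infty)$ by $h_0(v_i)=z_i$ and $g:\Omega\to\mathbb R$ by $g(v_i)=\beta_i$. Since each $z_i>0$ and $\Omega$ is finite, there is some $\delta>0$ so that $h_t:=h_0+tg$ takes strictly positive values on $\Omega$ whenever $|t|<\delta$. Continuity of $h_t$ and $g$ on $\Omega$ is automatic from the discrete topology, and the hypothesis that $\Omega$ is not contained in a closed hemisphere holds by assumption, so all the hypotheses of Theorem \ref{theorem variational formula} are met.

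Next I would identify the Wulff shape generated by $h_t$ with the polytope $P_t$ given in the statement. Directly from the definition of Wulff shape for a finite $\Omega$,
\[
[h_t,\Omega]=\{x\in\rn: x\cdot v_i\leq h_t(v_i)=z_i+t\beta_i,\ i=1,\dots, N\}=P_t.
\]
With the error term $o(t,\cdot)\equiv 0$ in the expansion $h_t=h_0+tg+o(t,\cdot)$, Theorem \ref{theorem variational formula} applies and yields
\[
\left.\frac{d}{dt}\right|_{t=0}I_q(P_t)=\int_{\Omega}g(v)\,dF_q(P_0,v).
\]
Since $\Omega$ is the finite set $\{v_1,\dots,v_N\}$, the restriction of $F_q(P_0,\cdot)$ to $\Omega$ is a (discrete) sum of point masses, and the integral on the right collapses to $\sum_{i=1}^N \beta_i F_q(P_0,v_i)$, as required.

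There is no real obstacle here; the corollary is essentially a bookkeeping exercise once Theorem \ref{theorem variational formula} is in hand. The one point worth being explicit about, already flagged in the remark following the theorem, is that the variational formula was originally stated in \cite{XLYZ} with domain $\Omega=\sn$, whereas here we need it for a finite $\Omega$. As the remark notes, the proof in \cite{XYZZ} goes through without change for any compact $\Omega\subset\sn$ not contained in a closed hemisphere, which is precisely our setting.
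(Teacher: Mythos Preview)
Your proposal is correct and matches the paper's approach exactly: the paper simply states that the corollary is the discrete case of Theorem~\ref{theorem variational formula} and provides no separate argument, so your explicit specialization (taking $\Omega=\{v_1,\dots,v_N\}$, $h_0(v_i)=z_i$, $g(v_i)=\beta_i$, and $o(t,\cdot)\equiv 0$) is precisely what is intended.
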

Chord measures inherit their translation invariance and homogeneity (of degree $n+q-2$) from chord integrals. It was shown in \cite{XYZZ} that the chord measure $F_q(K,\cdot)$ is weakly continuous on $\kn$ with respect to Hausdorff metric. Inspired by the much fruitful $L_p$ Brunn-Minkowski theory, it is natural to consider the $L_p$ version of the chord measures. For each $p\in \mathbb{R}$ and $K\in \ko$, the $L_p$ chord measure $F_{p,q}(K,\cdot)$ is defined by
\begin{equation}
	dF_{p,q}(K,v) = h_K(v)^{1-p}dF_q(K,v).
\end{equation}
We remark here that the $L_p$ chord measure naturally arises from replacing the Minkowski sum $K+tL$ in \eqref{eq 10222} by the $L_p$ Minkowski sum $K+_p t\cdot L$. See Xi-LYZ \cite{XLYZ} for details.

For $p\leq 1$, since the exponent $1-p$ is nonnegative, the definition of $F_{p,q}(K,\cdot)$ naturally extends to all $K\in \kn$ as long as $o\in K$. It is important to notice that with the exception of $p=1$, the $L_p$ chord measure loses its translation invariance. However, it is still homogeneous in $K$; that is
\begin{equation*}
	F_{p,q}(tK,\cdot) = t^{n+q-p-1} F_{p,q}(K,\cdot)
\end{equation*}
for each $t>0$. In this paper, we focus our attention on the case $p\in [0,1)$. In this case, it was shown in \cite{XYZZ} that the $L_p$ chord measure is weakly continuous on the set of convex bodies containing the origin (not necessarily as an interior point) with respect to Hausdorff metric.
\begin{prop}\label{prop weak continuity}
	Let $p\in [0,1)$, $q>0$ and $K_i, K\subset \kn$. If $o\in K_i\cap K$ and $K_i$ converges to $K$ in Hausdorff metric, then $F_{p,q}(K_i,\cdot)$ converges weakly to $F_{p,q}(K,\cdot)$.
\end{prop}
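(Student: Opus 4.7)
The plan is to test the measures $F_{p,q}(K_i,\cdot)$ against an arbitrary $\phi\in C(\sn)$ and show $\int \phi\,dF_{p,q}(K_i,\cdot)\to \int \phi\,dF_{p,q}(K,\cdot)$. By definition, this amounts to comparing
\begin{equation*}
\int_{\sn}\phi(v)\,h_{K_i}(v)^{1-p}\,dF_q(K_i,v)\quad\text{and}\quad \int_{\sn}\phi(v)\,h_K(v)^{1-p}\,dF_q(K,v).
\end{equation*}
I would reduce matters to two already-established facts: first, the weak continuity of the (unweighted) chord measure $F_q$ on $\kn$ with respect to Hausdorff metric (noted in the excerpt as proved in \cite{XYZZ}); and second, that Hausdorff convergence $K_i\to K$ is equivalent to the uniform convergence $h_{K_i}\to h_K$ on $\sn$.

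The key analytic step is to show that $h_{K_i}^{1-p}\to h_K^{1-p}$ \emph{uniformly} on $\sn$. Since $o\in K_i\cap K$, all support functions involved are nonnegative, and the Hausdorff convergence bounds them uniformly in some interval $[0,M]$. Because $p\in[0,1)$ gives $1-p\in(0,1]$, the elementary inequality $|a^{1-p}-b^{1-p}|\le |a-b|^{1-p}$ for $a,b\ge 0$ (trivial when $p=0$, and standard subadditivity of $t\mapsto t^{1-p}$ when $p\in(0,1)$) yields $\|h_{K_i}^{1-p}-h_K^{1-p}\|_\infty\le \|h_{K_i}-h_K\|_\infty^{1-p}\to 0$. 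Therefore $\phi\, h_{K_i}^{1-p}\to \phi\, h_K^{1-p}$ uniformly on $\sn$.

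With this in hand I would split the error as
\begin{equation*}
\left|\int \phi h_{K_i}^{1-p}\,dF_q(K_i,\cdot)-\int \phi h_K^{1-p}\,dF_q(K,\cdot)\right|\le A_i+B_i,
\end{equation*}
where
\begin{equation*}
A_i=\int |\phi|\,\bigl|h_{K_i}^{1-p}-h_K^{1-p}\bigr|\,dF_q(K_i,\cdot),\qquad B_i=\left|\int \phi h_K^{1-p}\,dF_q(K_i,\cdot)-\int \phi h_K^{1-p}\,dF_q(K,\cdot)\right|.
\end{equation*}
The term $A_i$ is bounded by $\|\phi\|_\infty\,\|h_{K_i}^{1-p}-h_K^{1-p}\|_\infty\,F_q(K_i,\sn)$, and the total masses $F_q(K_i,\sn)$ are uniformly bounded because $F_q$ is weakly continuous, hence $A_i\to 0$. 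The term $B_i\to 0$ by weak convergence of $F_q(K_i,\cdot)$ applied to the fixed continuous test function $\phi\, h_K^{1-p}\in C(\sn)$.

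There is no substantive obstacle beyond making sure the weight $h_K^{1-p}$ behaves well where $h_K$ may vanish (which can occur since $o$ need not lie in the interior of $K$). The restriction $p<1$ is precisely what makes $t\mapsto t^{1-p}$ continuous at $0$, so that uniform convergence of $h_{K_i}$ transfers to uniform convergence of $h_{K_i}^{1-p}$; this is the only place where the hypothesis $p\in[0,1)$ is used in an essential way, and it is exactly why the proposition is restricted to this range.
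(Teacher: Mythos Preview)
Your argument is correct. The paper itself does not prove this proposition; it simply records it as a result established in \cite{XYZZ}, so there is no in-paper proof to compare against. (The \LaTeX\ source does contain a commented-out alternative approach---expressing the measures via the radial function and radial Gauss map and then using pointwise convergence plus dominated convergence---but this was excised from the final version.) Your route, which reduces everything to the already-cited weak continuity of the unweighted chord measure $F_q$ together with the uniform convergence $h_{K_i}^{1-p}\to h_K^{1-p}$ via the elementary inequality $|a^{1-p}-b^{1-p}|\le |a-b|^{1-p}$ for $a,b\ge 0$, is clean and entirely valid; it is also more direct than the commented-out approach, since it leverages the $p=1$ case as a black box rather than reproving convergence from scratch.
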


In particular, when $q=1$, the $L_p$ chord measures are nothing but the $L_p$ surface area measures fundamental in the $L_p$ Brunn-Minkowski theory. Note that when $p=0$, the $L_0$ surface area enjoys significant geometric meaning---up to a dimensional constant, it is more widely known as \emph{cone volume measure}. Following this, for $q>0$ and $K\in \kn$, we define the cone chord measure of $K$, denoted by $G_q(K,\cdot)$, as the finite Borel measure on $\sn$ given by
\begin{equation}
	G_q(K,\cdot) = \frac{1}{n+q-1} F_{0,q}(K,\cdot).
\end{equation}
It was shown in \cite{XLYZ} that the total measure of $G_q$ is nothing but the chord integral $I_q$; that is
\begin{equation}
	|G_q(K,\cdot)|=I_q(K).
\end{equation}
It is worthwhile to note that when $q=1$, this simply recovers the fact that the total measure of the cone volume measure gives the volume of a convex body.

\mycomment{
\section{Cone-chord measures}

{\color{red} This section is not needed. All that's done here is to show weak continuity of the measures with respect to Hausdorff convergence.}

It follows from the definition of cone-chord measure that for each bounded Borel function $g:\sn\rightarrow \mathbb{R}$, we have
\begin{equation}
\begin{aligned}
	\int_\sn g(v)G_q(K,v) &= \frac{2q}{(n+q-1)\omega_n} \int_{\partial K} (z\cdot \nu_K(z)) \widetilde{V}_{q-1}(K,z)g(\nu_K(z))d\mathcal{H}^{n-1}(z)\\
	&=\frac{2q}{(n+q-1)\omega_n} \int_{\sn} \rho_K(u)^n \widetilde{V}_{q-1}(K, u\rho_K(u))g(\alpha_K(u))du.
\end{aligned}
\end{equation}

For each compact convex $K$ in $\mathbb{R}^n$ with $o\in K$, define
\begin{equation}
A(K) = \{t[o,x]:t>0,x\in K\}
\end{equation}
to be the cone containing $K$. Note that if $K$ has non-empty interior, $A(K)$ must be a solid $n$-dimensional cone. Otherwise, $A(K)$ is contained in some $(n-1)$ dimensional subspace. If $o\in \partial K$, then $A(K)$ is contained in a half space. (In this case, $A(K)$ might not be open or closed.) If $o\in \Int K$, then $A(K) = \mathbb{R}^n$.

Define $\Omega(K) = A(K)\cap S^{n-1}$. Note that if $K\in \mathcal{K}^n$, \emph{i.e.}, $K$ has non-empty interior, then $\Omega(K)$ has non-empty interior in $S^{n-1}$. Note also that since $K$ is convex, $\Omega(K)$ is either spherically convex or contained in a lower dimensional subspace.

We divide the unit sphere into 3 pieces:
\begin{equation}
\begin{aligned}
\Omega_1(K) &= \Int\Omega(K),\\
\Omega_2(K) &= \Int(\Omega(K)^c),\\
\Omega_3(K) &= \partial \Omega(K),
\end{aligned}
\end{equation}
where the topology is with respect to the standard topology on $S^{n-1}$. Since $\Omega(K)$ is either spherically convex or contained in some lower dimensional subspace, we have $\mathcal{H}^{n-1}(\Omega_3(K)) = 0$. By their definitions, it is easy to check that
\begin{equation}
\begin{aligned}
\rho_K(u)>0 & \text{ if } u\in \Omega_1(K),\\
\rho_K(u)=0 & \text{ if } u\in \Omega_2(K).\\
\end{aligned}
\end{equation}

\begin{lemma}
\label{lemma continuity of radial function}
Suppose $K_i \in \mathcal{K}_o^n$ and $K_i$ converges to some compact convex $K$ in $\mathbb{R}^n$. Then, for each $u\in \Omega_1(K)\cup \Omega_2(K)$, we have
\begin{equation}
\rho_{K_i}(u)\rightarrow \rho_K(u).
\end{equation}
In particular, we have $\rho_{K_i}\rightarrow \rho_K$ almost everywhere on $S^{n-1}$.
\end{lemma}
\begin{proof}
We only need to prove the case when $o\in \partial K$.

We first show that $\lim_{i\rightarrow \infty}\rho_{K_i}(u) = \rho_{K}(u)$ for each $u\in \Omega_2(K)$. Since $\rho_K(u)=0$ for $u\in \Omega_2(K)$, we only need to show $\lim_{i\rightarrow \infty}\rho_{K_i}(u) = 0$. Assume otherwise, \emph{i.e.}, there exists $\varepsilon_0>0$ such that $\rho_{K_i}(u)\geq \varepsilon_0$. Then $\frac{\varepsilon_0}{2} u\in \Int K_i$, which implies
\begin{equation}
\frac{\varepsilon_0}{2} u\cdot v \leq h_{K_i}(v),
\end{equation}
for each $v\in S^{n-1}$. Let $i$ go to infinity. Since $h_{K_i}$ converges uniformly to $h_K$, we have
\begin{equation}
\frac{\varepsilon_0}{2} u\cdot v \leq h_{K}(v),
\end{equation}
for each $v\in S^{n-1}$, which implies $\frac{\varepsilon_0}{2} u\in K$. This is a contradiction to $\rho_K(u)=0$.

If $K$ is contained in some lower dimensional subspace, we are done since $\Omega_1(K)= \emptyset$.

For the rest of the proof, we assume $K\in \mathcal{K}^n$ is such that $\Omega_1(K)$ is non-empty. Note that $\rho_K(u)>0$ on $\Omega_1(K)$.

Let us now show
\begin{equation}
\lim_{i\rightarrow \infty} \rho_{K_i}(u) = \rho_K(u),
\end{equation}
for each $u\in \Omega_1(K)$.

Fix $u\in \Omega_1(K)$. Choose an arbitrary $\varepsilon_0$ such that $0<\varepsilon_0<\rho_K(u)$.

We first show that $\rho_{K_i}(u)<\rho_K(u)+\varepsilon_0$ for sufficiently large $i$. Assume otherwise, \emph{i.e.}, there exists a subsequence $i_j$ such that $\lim_{j\rightarrow \infty} i_j=\infty$ and $\rho_{K_{i_j}}(u)\geq\rho_{K}(u)+\varepsilon_0$. By the definition of radial function, the point $(\rho_K(u)+\varepsilon_0)u \notin K$. Hence, there exists $v_0\in S^{n-1}$ such that
\begin{equation}
(\rho_K(u)+\varepsilon_0)u\cdot v_0>h_K(v_0).
\end{equation}
Since the inequality is strict and $h_{K_{i_j}}(v_0)$ converges to $h_K(v_0)$, for sufficiently large $j$, we have
\begin{equation}
(\rho_K(u)+\varepsilon_0)u\cdot v_0>h_{K_{i_j}}(v_0),
\end{equation}
for sufficiently large $j$. Since $\rho_{K_{i_j}}(u)\geq\rho_{K}(u)+\varepsilon_0$, we have
\begin{equation}
\rho_{K_{i_j}}(u)u\cdot v_0>h_{K_{i_j}}(v_0),
\end{equation}
which is a contradiction to the definition of the support function.

Now we show that $\rho_{K_i}(u)>\rho_K(u)-\varepsilon_0$ for sufficiently large $i$. Assume otherwise, \emph{i.e.}, there exists a subsequence $i_j$ such that $\lim_{j\rightarrow \infty} i_j=\infty$ and $\rho_{K_{i_j}}(u)\leq\rho_{K}(u)-\varepsilon_0$.

We first claim that $(\rho_K(u)-\varepsilon_0/2)u\in \Int K$. Otherwise, $(\rho_K(u)-\varepsilon_0/2)u\in \partial K$. Since $\rho_K(u)u\in \partial K$, $(\rho_K(u)-\varepsilon_0/2)u\in \partial K$, and $o\in \partial K$, we have $tu\in \partial K$ for $0\leq t\leq \rho_K(u)$. This implies that $u \in \Omega_3(K)$, which is a contradiction to the choice of $u$. Thus, the claim is justified.

Since $\rho_{K_{i_j}}(u)\leq\rho_{K}(u)-\varepsilon_0$, we have $(\rho_K(u)-\varepsilon_0/2)u\notin K_{i_j}$. Hence, there exists $v_j\in S^{n-1}$ such that
\begin{equation}
(\rho_K(u)-\varepsilon_0/2)u\cdot v_j>h_{K_{i_j}}(v_j).
\end{equation}
Since $S^{n-1}$ is compact, (by taking subsequences), we may assume that $v_j$ converges to $v_0\in S^{n-1}$. Now let $j$ go to infinity, we have
\begin{equation}
(\rho_K(u)-\varepsilon_0/2)u\cdot v_0\geq h_{K}(v_0).
\end{equation}
But, this is a contradiction to $(\rho_K(u)-\varepsilon_0/2)u\in \Int K$.

Since $\varepsilon_0$ can be arbitrarily small, we have $\rho_{K_i}(u)\rightarrow \rho_K(u)$ for $u\in \Omega_1(K)$. The desired result now follows from observing that $\Omega_3(K)$ is a set of measure $0$.
\end{proof}

The next lemma shows that $\alpha_{K_i}$ converges to $\alpha_{K_0}$ almost everywhere on $\Omega_1(K_0)$.
\begin{lemma}
\label{lemma convergence of radial Gauss map}
Suppose $K_i\in \mathcal{K}_o^n$ with $K_i\rightarrow K_0\in \mathcal{K}^n$. Let $\omega\subset \Omega_1(K_0)$ be the set (of measure 0) off of which $\alpha_{K_i}$ and $\alpha_{K_0}$ is defined on $\Omega_1(K_0)$. Then for each $u\in \Omega_1(K_0)\setminus \omega$,
\begin{equation}
\lim_{i\rightarrow \infty} \alpha_{K_i}(u)=\alpha_{K_0}(u).
\end{equation}
\end{lemma}
\begin{proof}
We only need to show the case when $o\in \partial K_0$.

Fix $u\in \Omega_1(K_0)\setminus \omega$. Note that by the proof of Lemma \ref{lemma continuity of radial function}, $\rho_{K_i}$ converges to $\rho_{K_0}$ pointwise on $\Omega_1(K_0)$. Let $x_i = \rho_{K_i}(u)$ and $v_i = \alpha_{K_i}(u)=\nu_{K_i}(\rho_{K_i}(u))$. Hence $x_i \rightarrow x_0$.

Since $x_i\in \partial K_i$ and $v_i$ is an outer unit normal of $K_i$ at $x_i$, we have
\begin{equation}
\label{eq local 1008}
x_i\cdot v_i= h_{K_i}(v_i).
\end{equation}

We show that every subsequence of $v_i$ has a convergent subsequence with limit $v_0$. Since $S^{n-1}$ is compact, every subsequence of $v_i$ will have a convergent subsequence (denoted again by $v_i$). We now show that the limit must be $v_0$. Denote the limit by $v'$. Since $h_{K_i}$ converges to $h_{K_0}$ uniformly, we have $h_{K_i}(v_i)\rightarrow h_{K_0}(v')$. Hence, by \eqref{eq local 1008},
\begin{equation}
x_0 \cdot v' = h_{K_0}(v').
\end{equation}
Hence $v'$ is an outer unit normal of $K_0$ at $u\rho_{K_0}(u)$. Since $\alpha_{K_0}(u)$ is well-defined, we have $v'=v_0$.
\end{proof}

The next lemma establishes the weak continuity of $G_q(K,\cdot)$ as a map in $K$.

\begin{lemma}
\label{lemma weak continuity}
	Let $q>1$. Suppose $K_i \in \mathcal{K}_o^n$ and $K_i$ converges to some compact convex $K$ in $\mathbb{R}^n$. Then,
	\begin{equation}
		G_q(K_i, \cdot)\rightharpoonup G_q(K,\cdot),\quad \text{ weakly.}
	\end{equation}
\end{lemma}
\begin{proof}
	We only need to show that for each fixed continuous function $g$ on $\sn$, we have
	\begin{equation}
		\int_{\sn} \rho_{K_i}(u)^n \widetilde{V}_{q-1}(K_i, u\rho_{K_i}(u))g(\alpha_{K_i}(u))du \rightarrow \int_{\sn} \rho_K(u)^n \widetilde{V}_{q-1}(K, u\rho_K(u))g(\alpha_K(u))du.
	\end{equation}
	
	We divide $\sn$ into $\Omega_1(K)$ and $\Omega_2(K)$. It suffices to show
	\begin{equation}
	\label{eq local 90}
		\int_{\Omega_1(K)} \rho_{K_i}(u)^n \widetilde{V}_{q-1}(K_i, u\rho_{K_i}(u))g(\alpha_{K_i}(u))du \rightarrow \int_{\Omega_1(K)} \rho_K(u)^n \widetilde{V}_{q-1}(K, u\rho_K(u))g(\alpha_K(u))du.
	\end{equation}
	and
	\begin{equation}
		\int_{\Omega_2(K)} \rho_{K_i}(u)^n \widetilde{V}_{q-1}(K_i, u\rho_{K_i}(u))g(\alpha_{K_i}(u))du \rightarrow 0=\int_{\Omega_2(K)} \rho_K(u)^n \widetilde{V}_{q-1}(K, u\rho_K(u))g(\alpha_K(u))du,
	\end{equation}
	
	Note that since $K_i\rightarrow K$, the integrand is bound and we may use dominated convergence theorem and get
	\begin{equation}
	\begin{aligned}
		&\lim_{i\rightarrow \infty}\int_{\Omega_2(K)} \rho_{K_i}(u)^n \widetilde{V}_{q-1}(K_i, u\rho_{K_i}(u))g(\alpha_{K_i}(u))du \\
		=&\int_{\Omega_2(K)} \lim_{i\rightarrow \infty}\rho_{K_i}(u)^n \widetilde{V}_{q-1}(K_i, u\rho_{K_i}(u))g(\alpha_{K_i}(u))du\\
		=&0,
	\end{aligned}
	\end{equation}
	where the last equality follows from Lemma \ref{lemma continuity of radial function} and the fact that on $\Omega_2(K)$, we have $\rho_K=0$.
	
	On $\Omega_1(K)$, note that by Lemma \ref{lemma continuity of radial function} and that $K_i\rightarrow K$,
	\begin{equation}
		\widetilde{V}_{q-1}(K_i, u\rho_{K_i}(u)) = \widetilde{V}_{q-1}(K_i-u\rho_{K_i}(u), o)\rightarrow \widetilde{V}_{q-1}(K-u\rho_{K}(u), o)= \widetilde{V}_{q-1}(K,u\rho_{K}(u)).
	\end{equation}
	Hence, by Lemmas \ref{lemma continuity of radial function} and \ref{lemma convergence of radial Gauss map}, we have
	\begin{equation}
		\rho_{K_i}(u)^n \widetilde{V}_{q-1}(K_i, u\rho_{K_i}(u))g(\alpha_{K_i}(u))\rightarrow\rho_K(u)^n \widetilde{V}_{q-1}(K, u\rho_K(u))g(\alpha_K(u)),
	\end{equation}
	point-wise almost everywhere. Equation \eqref{eq local 90} is now a direct consequence via dominated convergence theorem.
\end{proof}
}

\section{Polytopes whose normals are in general position}
 \label{sec general position}

The main result in this section demonstrates that if $P$ is a polytope whose outer unit facet normals are in general position, then the size of $P$, if it is large, has to be large uniformly in every direction.

We first gather the following fact about polytopes whose normals are in general position.
\begin{lemma}[Lemma 4.1 in \cite{MR3228445}]
\label{lemma lemma 4.1 in Zhu}
Let $v_1,\dots, v_N$ be $N$ unit vectors that are not contained in any closed hemisphere and $P\in \mathcal{P}(v_1, \dots, v_N)$. Assume that $v_1,\dots, v_N$ are in general position in dimension $n$. Then $F(P, v_i)$ is either a point or a facet. Moreover, if $n\geq 3$ and $F(P, v_i)$ is a facet, then the outer unit normals of $F(P,v_i)$ (viewed as an $(n-1)$-dimensional convex body in the hyperplane containing it) are in general position in dimension $(n-1)$.
\end{lemma}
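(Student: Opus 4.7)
The plan is to handle both assertions by dimension counting via the outer normal cone at $F(P, v_i)$, repeatedly using the fact that general position forces any at most $n$ distinct vectors from $\{v_1,\dots, v_N\}$ to be linearly independent (any linearly dependent subset of size $\leq n$ could be padded to a linearly dependent $n$-tuple, contradicting the hypothesis, provided $N\geq n$, which holds since $P$ has at least $n+1$ facets).

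For the first assertion, let $d = \dim F(P, v_i) \in \{0, 1, \dots, n-1\}$ and consider the normal cone $N := N(F(P,v_i), P)$, a polyhedral cone of dimension $n - d$ that is generated as a convex cone by the facet normals of $P$ whose corresponding facets contain $F(P,v_i)$. Denote this generating set by $\{v_l : l \in J\} \subset \{v_1, \dots, v_N\}$. If $v_i$ itself belongs to this set, then $v_i$ is a facet normal of $P$ and $F(P, v_i)$ is that facet, giving $d = n - 1$. Otherwise $v_i \in N$ forces
\begin{equation*}
v_i = \sum_{l \in J} c_l v_l, \qquad c_l \geq 0,
\end{equation*}
so $v_i$ lies in the $(n-d)$-dimensional span of $\{v_l : l \in J\}$. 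Set $k = n - d$. In the case $|J| \leq n$, general position makes $\{v_l : l \in J\}$ linearly independent, and since their span has dimension $k$ this gives $|J| = k$; the $k+1$ distinct vectors $\{v_i\} \cup \{v_l : l \in J\}$ then all lie in a $k$-dimensional subspace, and if $d \geq 1$ then $k+1 \leq n$, so general position would require these $k+1$ vectors to be linearly independent, a contradiction. In the remaining case $|J| > n$, any $n$ generators are linearly independent by general position and therefore span $\rn$, forcing $k = n$ and $d = 0$. Either way $d \in \{0, n-1\}$.

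For the second assertion, assume $F := F(P, v_i)$ is an $(n-1)$-dimensional facet lying in a hyperplane with normal $v_i$. The facets of $F$ (viewed as an $(n-1)$-polytope in this hyperplane) arise as the $(n-2)$-dimensional intersections $F \cap F(P, v_j)$, where $j \neq i$ ranges over those indices with $F(P, v_j)$ a facet of $P$ meeting $F$ in codimension one. Identifying the directions parallel to the hyperplane with $v_i^\perp$, the outer unit normal of such a facet of $F$ is a positive scalar multiple of the orthogonal projection $v_j - (v_j \cdot v_i)\, v_i$. To verify general position in dimension $n-1$, pick any $(n-1)$-tuple of distinct such indices $j_1, \dots, j_{n-1}$; using the orthogonal decomposition $\rn = \Span(v_i) \oplus v_i^\perp$, linear independence of the projected vectors $v_{j_1} - (v_{j_1}\cdot v_i)v_i, \dots, v_{j_{n-1}} - (v_{j_{n-1}}\cdot v_i)v_i$ in $v_i^\perp$ is equivalent to linear independence of the $n$ distinct vectors $v_i, v_{j_1}, \dots, v_{j_{n-1}}$ in $\rn$, which is immediate from the $n$-dimensional general position hypothesis.

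The main subtlety is the bookkeeping: one must carefully distinguish between the generating set $\{v_l : l \in J\}$ of true facet normals of facets containing $F(P,v_i)$ and the a priori larger set of indices $j$ with $F(P, v_i)$ contained in the supporting hyperplane $\{x : x\cdot v_j = h_P(v_j)\}$, and must identify failure of $v_i$ to be an extreme ray of $N$ with $F(P, v_i)$ not being a facet. Once this is organized, both parts reduce to a clean application of general position to collections of at most $n$ vectors from $\{v_1, \dots, v_N\}$.
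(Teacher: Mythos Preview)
The paper does not supply its own proof of this lemma; it is quoted verbatim as Lemma~4.1 of Zhu~\cite{MR3228445} and used as a black box. So there is no in-paper argument to compare against.

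Your proof is correct. The normal-cone dimension count for the first assertion is clean: the key observations---that the normal cone at $F(P,v_i)$ is generated by the facet normals $\{v_l:l\in J\}$ of facets containing $F(P,v_i)$, that its linear span has dimension $k=n-d$, and that general position forces any $\le n$ of the $v_l$ to be independent---combine exactly as you describe to rule out $1\le d\le n-2$. Your case split $|J|\le n$ versus $|J|>n$ is handled correctly (in particular, when $|J|\le n$ you correctly deduce $|J|=k$, and when $|J|>n$ any $n$ generators already span $\rn$). For the second assertion, the identification of the facet normals of $F$ with the normalized projections of the adjacent $v_j$'s onto $v_i^\perp$ is standard, and your linear-algebra equivalence (independence of the projections in $v_i^\perp$ $\Leftrightarrow$ independence of $v_i,v_{j_1},\dots,v_{j_{n-1}}$ in $\rn$) is exactly what is needed. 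The remark that $n\ge 3$ also guarantees distinct $j$'s yield non-parallel projections (since $v_i,v_{j_1},v_{j_2}$ are three independent vectors) is worth making explicit, but you have all the ingredients. Your closing paragraph on the bookkeeping---distinguishing true facet normals in $J$ from other $v_j$'s that merely support $P$ at $F(P,v_i)$---addresses the only real subtlety.
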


We need the following trivial lemma.
\begin{lemma}
\label{lemma orthonormal bases compact}
	The set of all orthornormal bases, as a subset of $\sn\times\dots\times \sn$, is compact.
\end{lemma}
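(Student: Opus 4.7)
The plan is to argue that the set of orthonormal bases is a closed subset of the compact ambient space $\underbrace{\sn \times \cdots \times \sn}_{n}$, and therefore compact.

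First, I would recall that $\sn$ is compact as a closed bounded subset of $\rn$, so the finite product $(\sn)^n$ is compact (either by Tychonoff, or by an elementary sequential-compactness argument for finite products). Next, I would identify the set $\mathcal{B}$ of orthonormal bases as
\begin{equation*}
\mathcal{B} = \bigcap_{1 \le i < j \le n} \bigl\{(u_1,\dots,u_n) \in (\sn)^n : u_i \cdot u_j = 0\bigr\}.
\end{equation*}
Each set in the intersection is the preimage of the closed singleton $\{0\}$ under the continuous map $(u_1,\dots,u_n) \mapsto u_i \cdot u_j$, hence closed in $(\sn)^n$. A finite intersection of closed sets is closed, so $\mathcal{B}$ is closed in a compact space, and therefore compact.

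There is essentially no obstacle here: the unit-length constraints are automatic from sitting inside $(\sn)^n$, and the orthogonality constraints are closed conditions because the Euclidean inner product is continuous. The lemma will be applied later by selecting sequences of facet-normal configurations (which, under the general-position hypothesis, contain $n$-tuples of linearly independent unit vectors) and extracting limits that remain orthonormal, so the stated compactness is the only fact needed.
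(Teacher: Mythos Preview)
Your proof is correct and takes essentially the same approach as the paper: both arguments show that the set of orthonormal bases is closed in the compact product $(\sn)^n$ by exploiting continuity of the inner product. The paper bundles the orthogonality constraints into a single continuous function $f(e_1,\dots,e_n)=\sum_{i\neq j}|e_i\cdot e_j|$ and takes $f^{-1}(\{0\})$, whereas you take the intersection of the individual preimages, but this is a cosmetic difference.
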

\begin{proof}
	Note that $(e_1,\dots, e_n)\in \sn\times\cdots\times\sn$ is an orthonormal basis if and only if $e_i\cdot e_j=0$ for any $i\neq j$. We set
	\begin{equation}
		f(e_1,\dots, e_n) = \sum_{i\neq j}|e_i\cdot e_j|.
	\end{equation}
	It is simple to see $f$ is continuous on $\sn\times\dots\times\sn$ and $(e_1,\dots, e_n)$ is an orthonormal basis if and only if $f(e_1,\dots, e_n)=0$. Hence, the set of all orthonormal bases is a closed subset and being a closed subset of a compact set makes it compact.
\end{proof}

Let $v_1, \dots, v_N$ be $N$ unit vectors that are in general position in dimension $n$. Define
\begin{equation}
	g(e_1,\dots, e_n) =\min_{1\leq i_1<i_2\leq N} \min_{1\leq j\leq n} \max\left\{\sqrt{1-(v_{i_1}\cdot e_j)^2}, \sqrt{1-(v_{i_2}\cdot e_j)^2}\right\}.
\end{equation}
Note that since $v_1, \dots, v_N$ are in general position, for any $1\leq i_1<i_2\leq N$, the vectors $v_{i_1}$ and $v_{i_2}$ are not parallel. Thus, we have
\begin{equation}
	\max\left\{\sqrt{1-(v_{i_1}\cdot e_j)^2}, \sqrt{1-(v_{i_2}\cdot e_j)^2}\right\}>0,
\end{equation}
for any arbitrary unit vector $e_j$.Therefore, we conclude that $g$ is a positive function. It is simple to see that $g$ is also continuous. By Lemma \ref{lemma orthonormal bases compact}, there exists $c_0>0$ such that
\begin{equation}
\label{eq local 1}
	g(e_1,\dots, e_n)\geq c_0,
\end{equation}
if $e_1, \dots, e_n$ forms an orthonormal basis. Note that $c_0$ here only depends on $v_1,\dots, v_N$.

We need the following estimate. Note that to avoid introducing constants that look like $c_{1000}$, we will use $c_0$ to denote a constant that may change from line to line (and certainly from lemma to lemma).
\begin{lemma}
\label{lemma uniform ball estimate}
	Let $v_1,\dots, v_N\in \sn$ be in general position in dimension $n$ and $1\leq i_1<i_2\leq N$. Let $B_1^{n-1}$ and $B_2^{n-1}$ be two $(n-1)$-dimensional balls of radius $R$ such that $B_1^{n-1}\perp v_{i_1}$ and $B_2^{n-1} \perp v_{i_2}$. Consider
	\begin{equation}
		K= \text{conv}\, \{B_1^{n-1}, B_2^{n-1}\}.
	\end{equation}
	Then, there exists $c_0>0$,  and $x_0\in \text{int}\, K$ such that
	\begin{equation}
		B(x_0, c_0R)\subset K.
	\end{equation}
	Here, the constant $c_0>0$ only depends on $n$ and $v_1,\dots, v_N$. In particular, it does not depend on $i_1$ and $i_2$.
\end{lemma}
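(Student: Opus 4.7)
The plan is to take $x_0$ to be the midpoint of the centers of the two disks and exhibit a ball of radius proportional to $R$ around it. Let $o_1, o_2$ denote the centers of $B_1^{n-1}, B_2^{n-1}$ respectively, set $x_0 = \tfrac{1}{2}(o_1+o_2)$, and write $U_k = v_{i_k}^{\perp}$, $D_k = \{x \in U_k : |x| \le 1\}$, so that $B_k^{n-1} = o_k + R D_k$. The goal is $B(x_0, c_0 R) \subset K$, and the interior condition $x_0 \in \Int K$ will then follow automatically.

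The first step is a direct convexity computation. For any $d_1 \in D_1$ and $d_2 \in D_2$,
\begin{equation*}
x_0 + \tfrac{R}{2}(d_1 + d_2) = \tfrac{1}{2}(o_1 + R d_1) + \tfrac{1}{2}(o_2 + R d_2) \in K,
\end{equation*}
so $x_0 + \tfrac{R}{2}(D_1 + D_2) \subset K$. Thus it suffices to prove a Minkowski-sum lower bound $B(0, c_1) \subset D_1 + D_2$ with $c_1>0$ depending only on $n$ and $v_1, \dots, v_N$.

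For this Minkowski-sum bound, I first observe that general position prevents any two $v_j$'s from being parallel, so the angle $\theta = \angle(v_{i_1}, v_{i_2})$ satisfies $\sin\theta \ge c_2 > 0$ with $c_2$ depending only on $v_1, \dots, v_N$ (this is also extracted from the inequality $g \ge c_0$ established just before the statement, by applying it to any orthonormal basis whose first vector is $v_{i_1}$). Given any unit vector $u$, I decompose $\rn = V \oplus V^{\perp}$ with $V = \Span\{v_{i_1}, v_{i_2}\}$. On the $2$-plane $V$ the lines $V \cap U_1$ and $V \cap U_2$ meet at angle $\theta$, and a direct two-dimensional linear algebra computation yields a decomposition of the $V$-component of $u$ into vectors in $V \cap U_1$ and $V \cap U_2$ each of norm at most $C/\sin\theta$. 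The remaining part of $u$ lies in $V^{\perp} \subset U_1 \cap U_2$, and can simply be split in half between $U_1$ and $U_2$. Summing, $u = u_1 + u_2$ with $u_k \in U_k$ and $|u_k| \le C'/\sin\theta$; scaling produces $(\sin\theta/C')u \in D_1 + D_2$, so $B(0, c_2/C') \subset D_1 + D_2$, as required.

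The main difficulty is ensuring that the Minkowski-sum constant depends \emph{only} on the full vector family $v_1, \dots, v_N$ (and $n$), not on the particular pair $i_1, i_2$. This is precisely where general position enters: it forces $\sin\theta$ to be uniformly bounded below across all pairs, which controls the intrinsic $1/\sin\theta$ blow-up appearing in the decomposition. Indeed, as $v_{i_1}$ approaches $\pm v_{i_2}$ the two disks become coplanar and the convex hull of $B_1^{n-1}$ and $B_2^{n-1}$ degenerates, so some dependence on $\sin\theta$ is genuinely unavoidable; general position is exactly the hypothesis that bounds this away.
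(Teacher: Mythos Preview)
Your argument is correct and takes a genuinely different route from the paper. The paper does not choose $x_0$ explicitly; instead it invokes John's theorem to obtain an ellipsoid $E$ centered at some $x_0$ with $E\subset K\subset x_0+n(E-x_0)$, then bounds each semi-axis $a_i$ from below by comparing the projections $|P_{e_i}K|\le 2na_i$ with $|P_{e_i}B_k^{n-1}|=2R\sqrt{1-(v_{i_k}\cdot e_i)^2}$, so that $a_i\ge \tfrac{R}{n}\max_k\sqrt{1-(v_{i_k}\cdot e_i)^2}$; the uniform lower bound on this maximum over all orthonormal bases and all pairs $(i_1,i_2)$ is exactly the constant $c_0$ coming from the compactness argument for the function $g$ set up just before the lemma. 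Your proof, by contrast, fixes $x_0$ as the midpoint of the two centers, reduces the problem to showing $B(0,c_1)\subset D_1+D_2$, and establishes this by an explicit linear-algebra decomposition in the $2$-plane $V=\Span\{v_{i_1},v_{i_2}\}$, with the relevant constant being $\sin\theta$ rather than the minimax quantity $g$. Your approach is more elementary (no John ellipsoid) and fully constructive, and it isolates cleanly that the only obstruction is the pairwise angle; the paper's approach is less explicit about $x_0$ but ties in directly with the auxiliary function $g$ it has already set up, which it reuses verbatim.
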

\begin{proof}
Note that since $v_{i_1}$ and $v_{i_2}$ are linearly independent, the convex set $K$ has to have nonempty interior. By John's lemma, there exists $x_0\in \text{int}\, K$ and $a_1, \dots, a_n>0$ and an orthonormal basis $e_1,\dots, e_n$ such that the ellipsoid
	\begin{equation}
		E = \left\{x\in \rn: \frac{|(x-x_0)\cdot e_1|^2}{a_1^2}+\dots+\frac{|(x-x_0)\cdot e_n|^2}{a_n^2}\leq 1  \right\}
	\end{equation}
	satisfies
	\begin{equation}
	\label{eq local 2}
		E\subset K\subset x_0+n(E-x_0).
	\end{equation}
	For simplicity of notation, we denote $x_0+n(E-x_0)$ by $E'$, which is just an enlargement of $E$ with respect to its center $x_0$ by a factor of $n$.
	
	Since $K\subset E'$, we have
	\begin{equation}
		|P_{e_i} K|\leq |P_{e_i}E'| = 2na_i,
	\end{equation}
	for each $i=1,\dots, n$. Here, we use $P_{e_i} K$ to denote the image of the orthogonal projection of $K$ onto the line spanned by $e_i$. On the other hand, since $B_1^{n-1}\subset K$ and $B_2^{n-1}\subset K$, we have
	\begin{equation}
		|P_{e_i} K|\geq \max \{ |P_{e_i}B_1^{n-1}|, |P_{e_i}B_2^{n-1}|\}.
	\end{equation}
	Note that since $B_1^{n-1}\perp v_{i_1}$ and $B_2^{n-1}\perp v_{i_2}$, we have
	\begin{equation}
		\begin{aligned}
			|P_{e_i} B_1^{n-1}| = 2R\sqrt{1-(v_{i_1}\cdot e_i)^2},\\
			|P_{e_i} B_2^{n-1}| = 2R\sqrt{1-(v_{i_2}\cdot e_i)^2}.
		\end{aligned}
	\end{equation}
	Combining the above, we have
	\begin{equation}
		a_i\geq \frac{R}{n}\max\left\{\sqrt{1-(v_{i_1}\cdot e_i)^2}, \sqrt{1-(v_{i_2}\cdot e_i)^2}\right\}.
	\end{equation}
	By \eqref{eq local 1}, there exists $c_0>0$ independent of the choice of $i_1$ and $i_2$ such that
	\begin{equation}
		a_i\geq c_0R.
	\end{equation}
	By the left half of \eqref{eq local 2}, we have
	\begin{equation}
		B(x_0, c_0R)\subset K.
	\end{equation}

\end{proof}

The following key lemma reveals the special structure for polytopes whose normals are in general position: if the polytope gets large, then it has to get large uniformly in all directions.

\begin{lemma}
\label{lemma uniform unbounded estimate}
	Let $v_1,\dots, v_N$ be $N$ unit vectors that are not contained in any closed hemisphere, and $P_i$ be a sequence of polytopes in $\mathcal{P}(v_1,\dots, v_N)$. Assume the vectors $v_1,\dots, v_N$ are in general position in dimension $n$. If the outer radii $R_i$ of $P_i$ are not uniformly bounded in $i$, then their inner radii $r_i$ are not uniformly bounded in $i$ either.
\end{lemma}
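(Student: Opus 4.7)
The plan is to argue by contradiction through a rescaling and Hausdorff limit, using Farkas' lemma together with the general position hypothesis instead of inducting on dimension. Suppose that, after passing to a subsequence, $R_i \to \infty$ while $r_i$ stays bounded. Translate each $P_i$ so its incenter is at the origin, so that $B(0,r_i)\subset P_i$; since the diameter of $P_i$ is at least $R_i$ (Jung's theorem), there is a vertex of $P_i$ at distance $D_i \geq R_i/2 \to \infty$ from the origin. Set $\tilde P_i := P_i/D_i$, so that $\tilde P_i \subset \overline{B(0,1)}$ contains the origin and a point of unit norm, and has inradius $r_i/D_i \to 0$.

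Writing $z_j^{(i)} := h_{P_i}(v_j)$ and $\tilde z_j^{(i)} := z_j^{(i)}/D_i$, the inclusions $B(0,r_i)\subset P_i \subset B(0,D_i)$ give $\tilde z_j^{(i)} \in [0,1]$. After a further subsequence, $\tilde z_j^{(i)} \to \tilde z_j^* \in [0,1]$ for every $j$; by continuity of the Wulff shape construction, $\tilde P_i \to \tilde P := [\tilde z^*, \{v_1,\dots,v_N\}]$ in Hausdorff metric. The limit $\tilde P$ then has three key properties: $0 \in \tilde P$, $\tilde P$ contains a point of norm one, and $\tilde P$ has empty interior (since $r(\tilde P_i)\to 0$).

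The heart of the argument is the following rigidity claim, where general position does its work: \emph{if $\tilde z_j^*\geq 0$ for all $j$, any nonempty $Q := \bigcap_{j=1}^N\{x : x\cdot v_j \leq \tilde z_j^*\}$ with empty interior must equal $\{0\}$}. Nonemptiness with empty interior supplies, via Farkas' lemma, non-negative coefficients $\lambda_j$, not all zero, with $\sum_j \lambda_j v_j = 0$ and $\sum_j \lambda_j \tilde z_j^* = 0$; non-negativity then forces $\tilde z_j^* = 0$ on the support $S := \{j : \lambda_j > 0\}$. General position gives $|S| \geq n+1$ (any $k \leq n$ vectors from $\{v_1,\dots,v_N\}$ are linearly independent, hence cannot be positively dependent), and any $n$ of the $v_j$ for $j\in S$ already span $\rn$, so $\Span\{v_j : j \in S\} = \rn$; combined with $\sum_{j\in S} \lambda_j v_j = 0$ and $\lambda_j>0$, this forces $\{v_j\}_{j \in S}$ to positively span $\rn$. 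Then for any $x \in Q$, writing $x = \sum_{j\in S} \mu_j v_j$ with $\mu_j \geq 0$ and using $v_j \cdot x \leq \tilde z_j^* = 0$ on $S$ yields $|x|^2 = \sum_{j\in S}\mu_j (v_j \cdot x) \leq 0$, so $x=0$.

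Applied to $\tilde P$, this forces $\tilde P = \{0\}$, contradicting the existence of a point of unit norm in $\tilde P$. The main obstacle is the rigidity claim above: it is exactly where general position rules out the slab-like degenerations (e.g.\ long thin rectangles) that would permit $R_i \to \infty$ with $r_i$ bounded. Everything else — the incenter translation, the Jung-type lower bound $D_i \geq R_i/2$, the rescaling, and Wulff shape continuity — is routine, provided one remembers that the incenter translation keeps $\tilde z_j^{(i)} \geq 0$ and hence $\tilde z_j^* \geq 0$, which is what allows Farkas' lemma to be applied in the desired form.
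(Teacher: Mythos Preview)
Your proof is correct and takes a genuinely different route from the paper's. The paper argues by induction on the dimension $n$: in the base case $n=2$ it locates two long edges with non-parallel normals (using that the surface area measure has centroid at the origin) and invokes the auxiliary Lemma~\ref{lemma uniform ball estimate} (John's ellipsoid plus a compactness argument on orthonormal frames) to inscribe a large ball in their convex hull; in the inductive step it passes to a facet with large outer radius, uses Lemma~\ref{lemma lemma 4.1 in Zhu} to know that the facet's own normals are again in general position in dimension $n-1$, applies the inductive hypothesis inside that facet, finds a second such facet via the centroid relation, and again applies the ball-inscribing lemma. Your argument bypasses both the induction and these auxiliary lemmas: you rescale, pass to a Hausdorff limit, and use a theorem of alternatives to force the degenerate limiting Wulff shape to be $\{0\}$. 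General position enters in a single stroke---any positive linear dependence among the $v_j$ must involve at least $n+1$ vectors, hence a positively spanning subfamily on which $\tilde z_j^{*}=0$, which pins $Q$ to the origin. This is shorter and more conceptual, at the price of importing a Farkas/Gordan-type alternative (the precise version you need is that infeasibility of $\{v_j\cdot x<\tilde z_j^{*}\ \forall j\}$ yields $\lambda\ge 0$, $\lambda\neq 0$, $\sum\lambda_j v_j=0$, $\sum\lambda_j\tilde z_j^{*}\le 0$) and the routine but worth-stating continuity of $z\mapsto[z,\Omega]$ through degenerate limits; the paper's approach is more hands-on and in principle delivers a quantitative inscribed-ball radius at each scale, though that extra information is not used downstream. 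One small imprecision to tighten: you should define $D_i:=\max_{x\in P_i}|x|$ explicitly (achieved at a vertex), since both $\tilde P_i\subset\overline{B(0,1)}$ and the existence of a unit-norm point in $\tilde P_i$ rely on $D_i$ being the \emph{maximal} distance.
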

\begin{proof}
	We will do induction on the dimension $n$.
	
	First, let us consider the $n=2$ case. Since $R_i$ are not uniformly bounded in $i$, there exists an edge $E_i$ from each $P_i$ such that $|E_i|$ are not uniformly bounded. Recall that the surface area measure has its centroid at the origin; that is,
	\begin{equation}
		\sum_{j=1}^N |F(P_i, v_j)|v_j = o,
	\end{equation}
	where $F(P_i, v_j) = \{x\in P_i: x\cdot v_j = h_{P_i}(v_j)\}$.
	Therefore, there must exist another edge $E'_i$ (different from $E_i$) of $P_i$ such that $|E_i'|$ are not uniformly bounded either. By taking a subsequence (and without causing confusion, use the same subscript for the subsequence), we can assume
	\begin{equation}
		|E_i|, |E_i'|>2i.
	\end{equation}
	Observe that since $E_i$ and $E_i'$ are edges of $P_i$, there exist $i_1\neq i_2$ such that $v_{i_1}$ and $v_{i_2}$ are the corresponding normals. We now take line segments $L_i$ and $L_i'$ of length $2i$ that are subsets of $E_i$ and $E_i'$, respectively.
	
	Consider
	\begin{equation}
	\label{eq local 3}
		K_i = \text{conv} \left\{L_i, L_{i}'\right\}\subset  \text{conv}\, \{E_i, E_i'\}\subset P_i.
	\end{equation}
	By Lemma \ref{lemma uniform ball estimate}, there exists $c_0>0$ and $x_i\in \text{int}\, K_i\subset \text{int}\, P_i$ such that
	\begin{equation}
		B(x_i, c_0i)\subset K_i.
	\end{equation}
	This, when combined with \eqref{eq local 3}, implies that the inner radii $r_i$ of $P_i$ are not uniformly bounded. This proves the base step.
	
	We now assume that the lemma is true in dimension $(n-1)$ and use that to establish the dimension $n$ case.
	
	Since $R_i$ are not uniformly bounded, there exists a facet $E_i$ from each $P_i$ such that the outer radii $\widetilde{R}_i$ of $E_i$ are not uniformly bounded. By possibly taking a subsequence, we may assume all $E_i$ have the same normal vector; that is, they are parallel. By Lemma \ref{lemma lemma 4.1 in Zhu}, the outer unit normals of $E_i$ are in general position in dimension $(n-1)$. Therefore, by the inductive hypothesis, the inner radii of $E_i$ are not uniformly bounded. In particular, their $(n-1)$-dimensional areas are not uniformly bounded in $i$. Using again the fact that surface area measure has its centroid at the origin, we may find a facet  $E_i'$ from $P_i$ such that the $(n-1)$-dimensional areas of $E_i'$ are not uniformly bounded either; as a consequence, the outer radii $\widetilde{R}'_i$ of $E_i'$ are not uniformly bounded. Repeating the same argument as for $E_i$, we may use the inductive hypothesis to conclude that the inner radii of $E_i'$ are not uniformly bounded.
	
	Observe that since $E_i$ and $E_i'$ are facets of $P_i$, there exist $i_1\neq i_2$ such that $v_{i_1}$ and $v_{i_2}$ are the corresponding normals. Since the inner radii for $E_i$ and $E_i'$ are not uniformly bounded, by taking a subsequence (and using the same notation for the subsequence), we can assume both $E_i$ and $E_i'$ contain $(n-1)$ dimensional balls of radius $i$. We denote these balls by $B_i$ and $B_i'$.
	
	Consider
	\begin{equation}
	\label{eq local 4}
		K_i = \text{conv}\, \{B_i, B_i'\}\subset \text{conv}\, \{E_i, E_i'\}\subset P_i.
	\end{equation}
	By Lemma \ref{lemma uniform ball estimate}, there exists $c_0>0$ and $x_i\in \text{int}\, K_i\subset \text{int}\, P_i$ such that
	\begin{equation}
		B(x_i, c_0i)\subset K_i.
	\end{equation}
	This, when combined with \eqref{eq local 4}, implies that the inner radii $r_i$ of $P_i$ are not uniformly bounded. This completes the proof.
\end{proof}

An immediate consequence of Lemma \ref{lemma uniform unbounded estimate} is the following result in dimensions greater than or equal to $2$.
\begin{coro}
\label{coro C0 estimate}
	Let $v_1, \dots, v_N\in \sn$ be $N$ unit vectors that are not contained in any closed hemisphere and $P\in \mathcal{P}(v_1,\dots, v_N)$. Assume that $v_1, \dots, v_N$ are in general position in dimension $n$. If the outer radius $R_i$ of $P_i$ is not uniformly bounded and $q\geq 0$, then the $q$-th chord integral $I_q(P_i)$ is also unbounded.
\end{coro}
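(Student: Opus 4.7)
The plan is to reduce the corollary directly to Lemma \ref{lemma uniform unbounded estimate}. That lemma guarantees that whenever $P_i\in \mathcal{P}(v_1,\dots,v_N)$ has outer radii $R_i$ that are not uniformly bounded, the inner radii $r_i$ are also not uniformly bounded. So after passing to a subsequence (denoted again by $P_i$), there exist points $x_i\in \rn$ with
\begin{equation}
B(x_i, r_i)\subset P_i \quad\text{and}\quad r_i\to \infty.
\end{equation}

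The next step is monotonicity of $I_q$ under set inclusion. For any line $\ell\in \mathscr{L}^n$, the inclusion $B(x_i,r_i)\subset P_i$ gives $|B(x_i,r_i)\cap \ell|\leq |P_i\cap \ell|$, and since $t\mapsto t^q$ is nondecreasing on $[0,\infty)$ for $q\geq 0$, we get $|B(x_i,r_i)\cap \ell|^q\leq |P_i\cap \ell|^q$. Integrating over $\mathscr{L}^n$ yields $I_q(B(x_i,r_i))\leq I_q(P_i)$. Combining the translation invariance of $I_q$ with its homogeneity of degree $n+q-1$ recalled in Section \ref{preliminary invariants}, we obtain
\begin{equation}
I_q(P_i)\geq I_q(B(x_i,r_i)) = I_q(B(o,r_i)) = r_i^{\,n+q-1}\, I_q(B(o,1)).
\end{equation}

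Since $n\geq 2$ and $q\geq 0$, the exponent $n+q-1\geq 1$ is strictly positive, and $I_q(B(o,1))>0$. As $r_i\to \infty$ along the subsequence, the right-hand side tends to $+\infty$, so $I_q(P_i)$ is unbounded, which is exactly the claim. No real obstacle arises here; the entire content is packaged inside Lemma \ref{lemma uniform unbounded estimate}, and the present corollary is just the observation that monotonicity plus homogeneity converts the inner-radius blow-up into a blow-up of the chord integral.
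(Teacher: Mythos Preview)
Your proof is correct and follows essentially the same route as the paper: invoke Lemma \ref{lemma uniform unbounded estimate} to obtain unbounded inner radii, then use translation invariance, homogeneity, and positivity of $I_q$ on the unit ball to force $I_q(P_i)$ to blow up. The only difference is that you spell out the monotonicity step $I_q(B(x_i,r_i))\leq I_q(P_i)$ explicitly, which the paper leaves implicit.
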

\begin{proof}
	This follows immediately from Lemma \ref{lemma uniform unbounded estimate}, the homogeneity and the translation invariance of $I_q$, and the fact that $I_q(B)$ is positive for the centered unit ball $B$.
\end{proof}

\section{The discrete $L_p$ chord Minkowski problem}
\label{sec discrete}

Let $\mu$ be a finite discrete Borel measure on $\sn$ that is not concentrated in any closed hemisphere; that is
\begin{equation}
\label{eq 1042}
	\mu = \sum_{i=1}^N \alpha_i \delta_{v_{i}},
\end{equation}
for some $\alpha_i>0$ and unit vectors $v_1, \dots v_N\in \sn$ not contained in any closed hemisphere.

In this section, we will solve the discrete $L_p$ chord Minkowski problem for $q>0$ and $0\leq p<1$.

For any $z=(z_1,\dots, z_N)\in \mathbb{R}^N$ such that $[z]$ has nonempty interior, we define
\begin{equation*}
	\Phi_{p,\mu}(z,\xi) =\begin{cases}
		\sum_{j=1}^N (z_j-\xi\cdot v_j)^p\cdot \alpha_j, &\text{ if } p\in (0,1),\\
		\sum_{j=1}^N \log (z_j-\xi\cdot v_j)\cdot \alpha_j, &\text{ if } p=0,\\
	\end{cases}
\end{equation*}
for each $\xi \in [z]$. We adopt the convention that $\log 0=-\infty$. When there is no confusion about what the underlying measure $\mu$ is, we shall write $\Phi_p=\Phi_{p,\mu}$.

It is simple to see that for each $p\in [0,1)$, the functional $\Phi_{p}(z, \cdot)$ is strictly concave in $\xi\in \Int [z]$. Therefore, the maximizer to the problem
\begin{equation}
	\sup_{\xi \in [z]}\Phi_{p}(z,\xi),
\end{equation}
if it exists, must be unique. When $p=0$, the existence of the maximizer $\xi \in \Int [z]$ follows from the fact that if a sequence of interior points $\Int [z]\ni\xi_j\rightarrow \partial [z]$, then $\Phi_0(z,\xi_j)\rightarrow -\infty$. This follows from the trivial fact that $\log 0 = -\infty$. In the case $p\in (0,1)$, the existence of maximizer is less trivial and was shown in Zhu \cite{MR3352764}. We summarize both the $p=0$ and $p\in (0,1)$ case in the following lemma.

\begin{lemma}[\cite{MR3352764}]
	\label{lemma phi has unique maximizer}
	Let $z=(z_1,\dots, z_N)\in \mathbb{R}^N$ be such that $[z]$ has nonempty interior and $p\in [0,1)$. Then the maximizer of the following optimization problem
	\begin{equation}
		\sup_{\xi \in [z]}\Phi_p(z,\xi)
	\end{equation}
	is uniquely attained at some $\xi_0\in \text{int}\,[z]$.
\end{lemma}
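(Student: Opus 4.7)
The plan is to separate uniqueness and existence. Uniqueness will follow from strict concavity of $\Phi_p(z,\cdot)$ on $\Int[z]$. For existence, I would distinguish the two cases: $p=0$ is essentially immediate from $\log 0=-\infty$, while $p\in(0,1)$ requires a perturbation argument at the boundary of $[z]$.

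For strict concavity, each summand $\xi\mapsto\alpha_j(z_j-\xi\cdot v_j)^p$ (respectively $\alpha_j\log(z_j-\xi\cdot v_j)$) is the composition of a strictly concave function on $(0,\infty)$ with an affine map, so it is concave on $\Int[z]$ and strictly concave along any direction $\eta$ with $v_j\cdot\eta\neq 0$. Since $v_1,\dots,v_N$ are not contained in any closed hemisphere, they must span $\rn$: otherwise a unit vector $u$ orthogonal to their span would place every $v_j$ in the closed hemisphere $\{v:v\cdot u\leq 0\}$. Hence for every $\eta\neq 0$ some $v_{j_0}\cdot\eta\neq 0$, so the corresponding summand is strictly concave along any segment with direction $\eta$. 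Summing yields strict concavity of $\Phi_p(z,\cdot)$ on $\Int[z]$, and the interior maximizer, if any, is therefore unique.

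For existence, when $p=0$, a maximizing sequence $\xi_k\in\Int[z]$ eventually satisfies $\Phi_0(z,\xi_k)\geq\Phi_0(z,\xi_\ast)>-\infty$ for any reference $\xi_\ast\in\Int[z]$; by compactness a subsequence converges to some $\xi_0\in[z]$, and $\xi_0\in\partial[z]$ is impossible because $\log 0=-\infty$ would force $\Phi_0\to-\infty$. When $p\in(0,1)$, $\Phi_p(z,\cdot)$ is continuous on the compact set $[z]$ (with $0^p=0$), so a maximizer $\xi_0$ exists and the only task is to exclude $\xi_0\in\partial[z]$. Suppose otherwise and let $J=\{j:z_j-\xi_0\cdot v_j=0\}\neq\emptyset$. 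Picking any $\xi_1\in\Int[z]$ and writing $\xi_t=(1-t)\xi_0+t\xi_1$, $a_j=z_j-\xi_0\cdot v_j$, $b_j=z_j-\xi_1\cdot v_j>0$, one has
\begin{equation*}
\Phi_p(z,\xi_t)-\Phi_p(z,\xi_0)=\sum_{j\in J}\alpha_j b_j^p\,t^p+\sum_{j\notin J}\alpha_j\bigl[((1-t)a_j+tb_j)^p-a_j^p\bigr].
\end{equation*}
The first sum is at least $c\,t^p$ for some $c>0$, while the second is $O(t)$ as $t\to 0^+$; since $p<1$, the right-hand side is strictly positive for all sufficiently small $t>0$, contradicting maximality. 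Hence $\xi_0\in\Int[z]$.

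The main subtle point is the $p\in(0,1)$ boundary exclusion: unlike $p=0$, $\Phi_p$ remains finite up to $\partial[z]$, so one cannot blow up and must instead exploit the unbounded one-sided derivative of $t\mapsto t^p$ at $0$ to beat the linear contribution from indices outside $J$.
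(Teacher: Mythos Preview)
Your proof is correct and follows exactly the outline the paper sketches before stating the lemma: strict concavity on $\Int[z]$ for uniqueness, the $\log 0=-\infty$ argument for $p=0$, and a boundary-exclusion argument for $p\in(0,1)$. The paper itself does not prove the $p\in(0,1)$ case but simply cites Zhu~\cite{MR3352764}; your perturbation argument (comparing the $t^p$ gain from indices in $J$ against the $O(t)$ variation from indices outside $J$) is the standard way to establish it and matches the spirit of that reference.
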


We will use $\xi_{p,\mu}(z)$ to denote the unique maximizer in Lemma \ref{lemma phi has unique maximizer}. Similar to before, when the context is clear, we will suppress the subscript $\mu$.

It is simple to observe that the operator $\xi_p$ is homogeneous of degree $1$. That is, for any $\lambda>0$, we have $\xi_p(\lambda z)= \lambda \xi_p(z)$.

The following fact regarding the continuity of $\xi_p$ in $z$ is well-known. For the sake of completeness, we provide a quick proof.
\begin{lemma}
\label{lemma continuity of xi}
	Let $z_l\in \mathbb{R}^N$ be such that $\lim_{l\rightarrow \infty}z_l=z\in \mathbb{R}^N$ and $p\in [0,1)$. If $[z]$ has nonempty interior, then
	\begin{equation}
	\label{eq local 7}
		\lim_{l\rightarrow \infty} \xi_p(z_l) =\xi_p(z),
	\end{equation}
	and
	\begin{equation}
	\label{eq local 6}
		\lim_{l\rightarrow \infty} \Phi_p(z_l, \xi_p(z_l)) = \Phi_p(z, \xi_p(z)).
	\end{equation}
\end{lemma}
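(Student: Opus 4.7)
The plan is to combine the uniqueness of the maximizer from Lemma \ref{lemma phi has unique maximizer} with a standard subsequential-limit argument, with extra care at the boundary of $[z]$ in the logarithmic case $p=0$. First I would observe that since $z_l\to z$ componentwise, the polytopes $[z_l]$ converge to $[z]$ in Hausdorff metric; in particular, $[z_l]$ is uniformly bounded and has nonempty interior for every sufficiently large $l$. Consequently, $\{\xi_p(z_l)\}$ is a bounded sequence in $\rn$, and by passing to a subsequence I may assume $\xi_p(z_l)\to\xi^*$ for some $\xi^*\in [z]$. The goal is to show $\xi^*=\xi_p(z)$, after which the full sequence must converge.

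Fix any $\eta_0\in\Int[z]$. Because $[z_l]\to[z]$, one has $\eta_0\in\Int[z_l]$ for all large $l$, and the maximizing property gives
\[
\Phi_p(z_l,\xi_p(z_l))\geq \Phi_p(z_l,\eta_0).
\]
The right-hand side tends to the finite number $\Phi_p(z,\eta_0)$ by joint continuity of $\Phi_p$ at $(z,\eta_0)$ (every argument $z_{l,j}-\eta_0\cdot v_j$ is bounded away from $0$). For $p\in(0,1)$, the map $x\mapsto x^p$ is continuous on $[0,\infty)$, so $\Phi_p(z_l,\xi_p(z_l))\to \Phi_p(z,\xi^*)$ directly, yielding $\Phi_p(z,\xi^*)\geq \Phi_p(z,\eta_0)$ for every $\eta_0\in\Int[z]$, and by continuity on $[z]$ the inequality extends to all of $[z]$. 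For $p=0$, I would first rule out $\xi^*\in\partial[z]$: if $z_j-\xi^*\cdot v_j=0$ for some $j$, then $z_{l,j}-\xi_p(z_l)\cdot v_j\to 0^+$ along the subsequence (nonnegativity from $\xi_p(z_l)\in[z_l]$), while the remaining summands stay bounded above, forcing $\Phi_0(z_l,\xi_p(z_l))\to -\infty$ and contradicting the lower bound $\Phi_0(z,\eta_0)$. Hence $\xi^*\in\Int[z]$, continuity again gives $\Phi_0(z,\xi^*)\geq \Phi_0(z,\eta_0)$ for every $\eta_0\in\Int[z]$, and the inequality extends trivially to $\partial[z]$ where $\Phi_0=-\infty$.

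In either regime of $p$, the point $\xi^*$ is therefore a maximizer of $\Phi_p(z,\cdot)$ on $[z]$, and uniqueness (Lemma \ref{lemma phi has unique maximizer}) forces $\xi^*=\xi_p(z)$. Since every convergent subsequence of $\{\xi_p(z_l)\}$ has the same limit, the full sequence converges, establishing \eqref{eq local 7}. The second limit \eqref{eq local 6} then follows from joint continuity of $\Phi_p$ at $(z,\xi_p(z))$: this is unconditional for $p\in(0,1)$, and for $p=0$ it uses that $\xi_p(z)\in\Int[z]$ (also by Lemma \ref{lemma phi has unique maximizer}), so the logarithms stay bounded along the convergence $(z_l,\xi_p(z_l))\to(z,\xi_p(z))$. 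The main technical obstacle is the $p=0$ case: the $-\infty$ singularity of the logarithm at $\partial[z]$ must be handled via the comparison with the fixed interior point $\eta_0$ in order to prevent $\xi_p(z_l)$ from drifting to the boundary of $[z]$ in the limit.
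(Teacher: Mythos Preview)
Your argument is correct and follows essentially the same route as the paper: extract a subsequential limit $\xi^*\in[z]$ by compactness, use the maximizing property together with continuity of $\Phi_p$ to identify $\xi^*$ as a maximizer of $\Phi_p(z,\cdot)$, and then invoke the uniqueness of Lemma \ref{lemma phi has unique maximizer}. The paper phrases this as a proof by contradiction (assuming $\xi'\neq\xi_p(z)$ and comparing the two limit values of $\Phi_p$), while you argue directly that $\xi^*$ maximizes, but the content is the same; your treatment of the $p=0$ boundary case (ruling out $z_j-\xi^*\cdot v_j=0$ via the uniform lower bound coming from a fixed interior test point $\eta_0$) is in fact a bit more explicit than the paper's.
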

\begin{proof}
We first note that \eqref{eq local 6} is a direct consequence of \eqref{eq local 7} by the definition of $\Phi_p$. Therefore, only \eqref{eq local 7} requires a proof.

By the fact that $z_l\rightarrow z$, the assumption that $[z]$ has nonempty interior, and Lemma \ref{lemma phi has unique maximizer}, both $\xi_p(z_l)$ for sufficiently large $l$  and $\xi_p(z)$ are well-defined. Moreover, we can conclude from the fact that $z_l\rightarrow z$ and the fact that $\xi_p(z_l)\in \Int [z_l]$ that $\xi_p(z_l)$ are uniformly bounded in $l$. Therefore, if \eqref{eq local 7} is false, there must exist a subsequence (which we still denote as $\xi_p(z_l)$) such that
\begin{equation*}
	\xi_p(z_l)\rightarrow \xi'\neq \xi_p(z).
\end{equation*}
Note that it must be the case that $\xi'\in [z]$. Moreover, by the definition of $\Phi_p$ and Lemma \ref{lemma phi has unique maximizer},
\begin{equation}
	\lim_{l\rightarrow \infty} \Phi_p(z_l, \xi_p(z_l))=\Phi_p(z,\xi')<\Phi_p(z,\xi_p(z))= \lim_{l\rightarrow \infty}\Phi_p(z_l,\xi_p(z)).
\end{equation}
However,
\begin{equation}
	\lim_{l\rightarrow \infty}\Phi_p(z_l,\xi_p(z))\leq \lim_{l\rightarrow \infty}\Phi_p(z_l,\xi_p(z_l))=\Phi_p(z,\xi').
\end{equation}
The above contradiction immediately gives the desired result.
\end{proof}

The next lemma shows that $\xi_p(z)$ is a differentiable function with respect to vector addition in $z$.

\begin{lemma}
\label{lemma differentiable}
	Let $z=(z_1,\dots, z_N)\in \mathbb{R}^N_+$, $p\in [0,1)$, and $\mu$ be as given in \eqref{eq 1042}. For each $\beta\in \mathbb{R}^N$, consider
	\begin{equation*}
		z(t) = z+t\beta,
	\end{equation*}
	for sufficiently small $|t|$ so that $z(t)\in \mathbb{R}^N_+$. Denote $\xi_p(t)= \xi_p(z(t))$. If $\xi_p(0)=o$, then $\xi_p'(0)$ exists. Moreover,
	\begin{equation}
	\label{eq 1043}
		\begin{aligned}
			o=\sum_{j=1}^N z_j^{p-1}\alpha_j v_j.
		\end{aligned}
	\end{equation}
\end{lemma}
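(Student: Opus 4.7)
The plan is to derive the identity \eqref{eq 1043} directly from the first-order optimality condition satisfied by the interior maximizer $\xi_p(0) = o$, and then to invoke the implicit function theorem to establish the existence of $\xi_p'(0)$.

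To set things up, since $z \in \R_+^N$ I have $o \cdot v_j = 0 < z_j$ for every $j$, so $o \in \Int[z]$, which is consistent with the conclusion of Lemma \ref{lemma phi has unique maximizer}. By the continuity of $\xi_p$ in $z$ established in Lemma \ref{lemma continuity of xi}, for sufficiently small $|t|$ the maximizer $\xi_p(t)$ stays in $\Int[z(t)]$ and hence satisfies the interior critical point equation $\nabla_\xi \Phi_p(z(t), \xi_p(t)) = 0$. Differentiating $\Phi_p$ in $\xi$ directly in each of the cases $p \in (0,1)$ and $p=0$ shows that this critical point equation is equivalent (after dividing by a harmless positive constant when $p>0$) to
\[
\sum_{j=1}^N (z_j(t) - \xi_p(t) \cdot v_j)^{p-1} \alpha_j v_j = 0.
\]
Evaluating at $t=0$ with $\xi_p(0)=o$ yields \eqref{eq 1043} immediately.

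For differentiability, I apply the implicit function theorem to
\[
F(t,\xi) := \sum_{j=1}^N (z_j(t) - \xi \cdot v_j)^{p-1} \alpha_j v_j,
\]
defined on a small neighborhood of $(0,o)$ in $\R \times \rn$ on which every factor $z_j(t) - \xi \cdot v_j$ remains positive. Then $F$ is smooth, $F(0,o)=0$, and a direct calculation gives
\[
D_\xi F(0,o) = (1-p) \sum_{j=1}^N z_j^{p-2} \alpha_j\, v_j v_j^T.
\]
Since the $v_j$ are not contained in any closed hemisphere they span $\rn$, so the right-hand side is a positive combination of rank-one positive semidefinite matrices whose sum is strictly positive definite; in particular $D_\xi F(0,o)$ is invertible. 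The implicit function theorem then produces a $C^1$ curve $\wt\xi(t)$ defined in a neighborhood of $t=0$ with $\wt\xi(0)=o$ and $F(t,\wt\xi(t))=0$. By uniqueness of the interior maximizer (Lemma \ref{lemma phi has unique maximizer}) together with the continuity of $\xi_p$ (Lemma \ref{lemma continuity of xi}), the continuous curve $\xi_p(t)$ must coincide with $\wt\xi(t)$ for small $|t|$, and therefore $\xi_p'(0)$ exists.

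The only non-trivial technical ingredient is the invertibility of $D_\xi F(0,o)$, which reduces to the elementary observation that a set of vectors not contained in any closed hemisphere must span $\rn$. Everything else is routine: compute the gradient of $\Phi_p$, identify it with the interior critical point condition, and apply the implicit function theorem.
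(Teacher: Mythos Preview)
Your proof is correct and follows essentially the same route as the paper: derive the first-order condition for the interior maximizer to obtain \eqref{eq 1043}, then apply the implicit function theorem to $F(t,\xi)=\sum_j (z_j(t)-\xi\cdot v_j)^{p-1}\alpha_j v_j$, using that the Jacobian $(1-p)\sum_j z_j^{p-2}\alpha_j\, v_j\otimes v_j$ is positive definite because the $v_j$ span $\rn$. Your version is slightly more explicit in justifying why the implicit-function curve $\wt\xi(t)$ coincides with $\xi_p(t)$ (via continuity and uniqueness of the critical point, which follows from strict concavity), a point the paper leaves implicit.
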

\begin{proof}
	Since $\xi_p(t)\in \Int [z(t)]$ and maximizes
	\begin{equation}
		\sup_{\xi \in [z(t)]} \Phi_{p}(z(t), \xi),
	\end{equation}
	taking the derivative in $\xi$ shows
	\begin{equation}
	\label{eq 1041}
	o =\sum_{j=1}^N (z_j(t)-\xi_p(t)\cdot v_j)^{p-1}\alpha_j v_j.
	\end{equation}
	In particular, at $t=0$, we have
	\begin{equation}
	o=\sum_{j=1}^N z_j^{p-1}\alpha_j v_j.
	\end{equation}
which establishes \eqref{eq 1043}.

Set
\begin{equation}
	F_p(t,\xi)=\sum_{j=1}^N (z_j(t)-\xi\cdot v_j)^{p-1}\alpha_j v_j.
	\end{equation}

Then, \eqref{eq 1041} simply says
\begin{equation}
	F_p(t,\xi_p(t))=o.
\end{equation}
By a direct computation, the Jacobian with respect to $\xi$ of $F_p$ at $t=0$ and $\xi=o$ is
\begin{equation}
\left.\frac{\partial F_p}{\partial \xi}\right|_{(0,o)} =(1-p)\sum_{j=1}^N z_j^{p-2} \alpha_j v_j\otimes v_j.
\end{equation}
Since $v_1,\dots, v_N$ span $\rn$, we conclude that the Jacobian $\frac{\partial F_p}{\partial \xi}$ is positive-definite at $t=0$ and $\xi=o$. By the implicit function theorem, we conclude that $\xi_p'(0)$ exists.
\end{proof}

For each $0\leq p<1$ and $q>0$, we consider the optimization problem
\begin{equation}
\label{eq 1051}
	\inf\{\Phi_p(z, \xi_p(z)): z\in \mathbb{R}^N, I_q([z])=|\mu|\}.
\end{equation}

\begin{lemma}
\label{lemma 1041}
	Let $0\leq p<1$ and $q>0$. If there exists $z\in \mathbb{R}^N_+$ with $\xi_p(z)=o$ and $I_q([z])=|\mu|$ satisfying
	\begin{equation}
		\Phi_p(z,o)=\inf\{\Phi_p(z, \xi_p(z)): z\in \mathbb{R}^N, I_q([z])=|\mu|\},
	\end{equation}
	then, there exists a polytope $P\in \mathcal{P}(v_1,\dots, v_N)$ containing the origin in its interior such that
	\begin{equation}
	\label{eq 1046}
		F_{p,q}(P, \cdot)=\mu.
	\end{equation}
	Moreover, for each $i=1,\dots, N$, we have
	\begin{equation}
	\label{eq 791}
		h_{[z]}(v_i)=z_i.
	\end{equation}
\end{lemma}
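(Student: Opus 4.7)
I would set $P = [z]$; since $\xi_p(z) = o$ lies in $\Int[z]$ by Lemma \ref{lemma phi has unique maximizer}, the polytope $P$ contains the origin in its interior. I would first establish the auxiliary claim \eqref{eq 791}, then derive an Euler--Lagrange identity via a Lagrange-multiplier/rescaling argument, and finally rescale $P$ to produce the polytope satisfying $F_{p,q}(P',\cdot)=\mu$. For \eqref{eq 791}, I would argue by contradiction: suppose $h_P(v_{i_0}) < z_{i_0}$ for some $i_0$ and pick $\epsilon \in (0,\, z_{i_0}-h_P(v_{i_0}))$. Setting $z' = z - \epsilon e_{i_0}$, the $i_0$-th half-space constraint remains non-binding, so $[z']=P$ and $I_q([z'])=|\mu|$, meaning $z'$ is still feasible. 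For every $\xi \in P = [z']$, the estimate $\xi\cdot v_{i_0} \leq h_P(v_{i_0}) < z'_{i_0}<z_{i_0}$ gives $0<z'_{i_0}-\xi\cdot v_{i_0}<z_{i_0}-\xi\cdot v_{i_0}$, and the strict monotonicity of $s\mapsto s^p$ (for $p\in(0,1)$) and $s\mapsto \log s$ (for $p=0$) yields $\Phi_p(z',\xi)<\Phi_p(z,\xi)$ pointwise in $\xi \in P$. Choosing $\xi = \xi_p(z')\in [z']$ then produces $\Phi_p(z',\xi_p(z')) < \Phi_p(z, \xi_p(z')) \leq \Phi_p(z,\xi_p(z))$, contradicting the minimality of $z$.

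\textbf{Lagrange multiplier via rescaling.} For arbitrary $\beta \in \mathbb{R}^N$ and sufficiently small $|t|$, write $z(t) = z + t\beta \in \mathbb{R}^N_+$, and set $\lambda(t) = (|\mu|/I_q([z(t)]))^{1/(n+q-1)}$ so that $I_q([\lambda(t)z(t)]) = |\mu|$ by the $(n+q-1)$-homogeneity of $I_q$. Corollary \ref{coro discrete variational formula} ensures that $\lambda$ is differentiable at $t=0$ with $\lambda(0)=1$ and
$$\lambda'(0) = -\frac{1}{(n+q-1)|\mu|}\sum_{i=1}^N \beta_i F_q(P, v_i).$$
Using the homogeneity identities $\Phi_p(\lambda z, \lambda\xi_p(z)) = \lambda^p \Phi_p(z,\xi_p(z))$ for $p\in(0,1)$ and $\Phi_0(\lambda z, \lambda\xi_0(z)) = |\mu|\log\lambda + \Phi_0(z,\xi_0(z))$, combined with Lemma \ref{lemma differentiable} (which supplies the differentiability of $\xi_p$ at $z$) and the envelope theorem (valid because $\xi_p(z)=o$ is an interior critical point of $\Phi_p(z,\cdot)$), I would differentiate $\Phi_p(\lambda(t)z(t), \xi_p(\lambda(t)z(t)))$ at $t=0$. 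Using $\partial_{z_i}\Phi_p(z,o) = p z_i^{p-1}\alpha_i$ for $p\in(0,1)$ and $\partial_{z_i}\Phi_0(z,o) = \alpha_i/z_i$, the minimality of $z$ and the arbitrariness of $\beta$ yield, for each $i$,
$$\alpha_i \;=\; C_p\, z_i^{1-p} F_q(P,v_i) \;=\; C_p\, h_P(v_i)^{1-p} F_q(P,v_i) \;=\; C_p F_{p,q}(P,v_i),$$
where I used $z_i=h_P(v_i)$ from the preceding step; here $C_p = \Phi_p(z,o)/((n+q-1)|\mu|)>0$ when $p\in(0,1)$ (positive since every $z_i>0$) and $C_0 = 1/(n+q-1)>0$.

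\textbf{Rescaling and main obstacle.} The polytope $P' = \lambda_0 P$ with $\lambda_0 = C_p^{1/(n+q-1-p)}>0$ satisfies, by the $(n+q-1-p)$-homogeneity of $F_{p,q}$,
$$F_{p,q}(P',\cdot) = \lambda_0^{n+q-1-p}F_{p,q}(P,\cdot) = C_p F_{p,q}(P,\cdot) = \mu,$$
and $P'$ still contains the origin in its interior since $P$ does. The chief technical point I anticipate is the rigorous execution of the envelope-theorem step in the Lagrange-multiplier computation; it hinges on the interior nature of $\xi_p(z)=o$ together with the differentiability of $\xi_p$ furnished by Lemma \ref{lemma differentiable}, and also on the stability of the facet structure of $[z(t)]$ for small $|t|$, which is what allows one to apply Corollary \ref{coro discrete variational formula} and to differentiate $\Phi_p(\cdot,\xi_p(\cdot))$ smoothly in a neighborhood of the minimizer.
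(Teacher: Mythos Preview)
Your proposal is correct and follows essentially the same approach as the paper: a Lagrange-multiplier argument via the rescaling $\lambda(t)z(t)$, combined with the first-order condition from Lemma~\ref{lemma differentiable} (your ``envelope theorem'') to kill the $\xi_p'(0)$ term, followed by the homogeneity rescaling of $[z]$. The only cosmetic differences are that the paper normalizes $|\mu|=1$ at the outset and proves \eqref{eq 791} after the Euler--Lagrange step rather than before; your concern about facet stability of $[z(t)]$ is unnecessary, since Corollary~\ref{coro discrete variational formula} applies for any $z\in\mathbb{R}_+^N$ without such an assumption.
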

\begin{proof}
	Because of homogeneity, we may assume $|\mu|=1$. Let $\beta\in \mathbb{R}^N$ be arbitrary and set
	\begin{equation}
		z(t) = z+t\beta.
	\end{equation}
	For sufficiently small $|t|$, we have $z(t)\in \mathbb{R}^N_+$. Set
	\begin{equation}
		\lambda(t) = I_q([z(t)])^{-\frac{1}{n+q-1}}.
	\end{equation}
	Note that $\lambda(0)=1$. By homogeneity of $I_q$, it is apparent that $I_q([\lambda(t)z(t)])=1$. By Corollary \ref{coro discrete variational formula}, we have
	\begin{equation}
	\label{eq 1044}
		\lambda'(0)=-\frac{1}{n+q-1} \sum_{i=1}^N \beta_i F_q([z], v_i).
	\end{equation}
	Let $\xi_p(t) = \xi_p(\lambda(t)z(t))=\lambda(t)\xi_p(z(t))$ and
	\begin{equation}
		\Psi_p(t) = \Phi_{p}(\lambda(t)z(t), \xi_p(t)).
	\end{equation}
	By Lemma \ref{lemma differentiable}, $\xi_p$ is differentiable at $t=0$. Moreover, \eqref{eq 1043} holds.

	Since $z$ is a minimizer, the fact that $0=\Psi_p'(0)$ shows
	\begin{equation}
	0=\lambda'(0) \left(\sum_{j=1}^N z_j^p \alpha_j\right) + \sum_{i=1}^N	z_i^{p-1} \alpha_i\beta_i-\xi_p'(0) \cdot \left(\sum_{j=1}^N z_j^{p-1}\alpha_jv_j\right).
	\end{equation}
	By \eqref{eq 1043} and \eqref{eq 1044}, we have
	\begin{equation}
		0=-\frac{1}{n+q-1}\left(\sum_{j=1}^Nz_j^p\alpha_j\right)\sum_{i=1}^N\beta_i F_q([z], v_i)+\sum_{i=1}^N \beta_i z_i^{p-1}\alpha_i.
	\end{equation}
	Since $\beta$ is arbitrary, we conclude that
	\begin{equation}
		F_{p,q}([z], \cdot) = \frac{n+q-1}{\Phi_p(z,o)}\mu(\cdot),
	\end{equation}
	if $p\in (0,1)$ and
	\begin{equation}
		G_q([z])=\mu(\cdot).
	\end{equation}
	The existence of $P$ now immediately follows from the fact that $F_{p,q}(K,\cdot)$ is homogeneous of degree $n+q-1-p\neq 0$ in $K$.
	
	We now show \eqref{eq 791}. Assume that it fails for some $i_0$. Let $\widetilde{z}\in \mathbb{R}^N_+$ be such that $\widetilde{z}_i=h_{[z]}(v_i)$. 
	By \eqref{eq 6151}, we have $\widetilde{z}_{i_0}<z_{i_0}$ and $\widetilde{z}_{i}\leq z_{i}$ for $i\neq i_0$. Note that $[z]=[\widetilde{z}]$ and consequently, $I_q([\widetilde{z}])=|\mu|$.
	By definition of $\Phi_p$ and $\xi_p$, we have
	\begin{equation*}
		\Phi_p(\widetilde{z}, \xi_p(\widetilde{z}))<\Phi_p(z, \xi_p(\widetilde{z}))\leq \Phi_p(z,\xi_p(z))=\Phi_p(z,o).
	\end{equation*}
	This is a contradiction to $z$ being a minimizer.
\end{proof}

We are in position to solve the discrete $L_p$ chord Minkowski problem when $0\leq p<1$ and $q>0$.

\begin{theorem}
\label{thm discrete}
	Let $0\leq p<1$, $q>0$, and $\mu$ be as given in \eqref{eq 1042}. If $v_1,\dots v_N\in \sn$ are in general position in dimension $n$, then there exists a polytope $P\in \mathcal{P}(v_1,\dots, v_N)$ containing the origin in its interior such that
	\begin{equation}
		F_{p,q}(P,\cdot)=\mu.
	\end{equation}
\end{theorem}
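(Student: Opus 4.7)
The approach will be variational: I solve the optimization problem \eqref{eq 1051} and then invoke Lemma \ref{lemma 1041}. By the homogeneity of $I_q$ (degree $n+q-1$) and the scaling behaviour of $\Phi_p$, I first normalize to $|\mu|=1$. Fix a minimizing sequence $(z^{(l)}) \subset \mathbb{R}^N$ with $I_q([z^{(l)}])=1$.

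The first task is to simplify the minimizing sequence using two operations that preserve the constraint and do not increase the objective. First, both $I_q([z])$ and $\Phi_p(z, \xi_p(z))$ are invariant under the translation $z_i \mapsto z_i - a\cdot v_i$ (which corresponds to translating $[z]$ by $-a$, transforming $\xi_p$ covariantly). Second, replacing $z_i$ by $h_{[z]}(v_i) \leq z_i$ leaves $[z]$ (and hence $I_q$) unchanged while pointwise not increasing $\Phi_p(z,\xi)$ on $[z]$, so it does not increase $\Phi_p(z,\xi_p(z))$. Using these, I may assume $o \in \Int [z^{(l)}]$ and $z^{(l)}_i = h_{[z^{(l)}]}(v_i) > 0$, so $z^{(l)} \in \mathbb{R}^N_+$.

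Next I establish uniform bounds on $(z^{(l)})$. For the upper bound, $o \in \Int [z^{(l)}]$ gives $z^{(l)}_i \leq 2R^{(l)}$ where $R^{(l)}$ is the outer radius of $[z^{(l)}]$. The general position hypothesis enters here: Corollary \ref{coro C0 estimate} shows that $R^{(l)}$ unbounded would force $I_q([z^{(l)}])$ unbounded, contradicting $I_q([z^{(l)}])=1$, so each $z^{(l)}_i$ is uniformly bounded above. The lower bound $\liminf_l z^{(l)}_i > 0$ is the main obstacle. I intend to argue by contradiction: if $z^{(l)}_{i_0} \to 0$ along a subsequence, then the $v_{i_0}$-facet of $[z^{(l)}]$ collapses to the origin, and $\xi_p(z^{(l)})\cdot v_{i_0} < z^{(l)}_{i_0} \to 0$. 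The plan is to compare $\Phi_p(z^{(l)}, \xi_p(z^{(l)}))$ with the value on a fixed competitor polytope (a scalar multiple of some $z^{\circ}$ normalized so $I_q([z^{\circ}])=1$), and exploit the concavity of $t\mapsto t^p$ (for $p \in (0,1)$) or $t \mapsto \log t$ (for $p=0$) to produce a strict inequality that contradicts $(z^{(l)})$ being minimizing.

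Once the bounds $0 < c \leq z^{(l)}_i \leq C$ are in hand, a subsequence converges to some $z^{*} \in \mathbb{R}^N_+$. Continuity of the Wulff map and of $I_q$ in the Hausdorff topology give $I_q([z^{*}]) = 1$, and Lemma \ref{lemma continuity of xi} yields $\Phi_p(z^{(l)}, \xi_p(z^{(l)})) \to \Phi_p(z^{*}, \xi_p(z^{*}))$, so $z^{*}$ is a minimizer. Translating $z^{*}$ by $\xi_p(z^{*})$ using the translation equivariance above, I arrange $\xi_p(z^{*}) = o$. Lemma \ref{lemma 1041} then produces a polytope in $\mathcal{P}(v_1,\dots,v_N)$ containing the origin in its interior whose $L_p$ chord measure equals a positive multiple of $\mu$; scaling this polytope by the homogeneity of $F_{p,q}$ (degree $n+q-1-p\neq 0$) gives the desired $P$.
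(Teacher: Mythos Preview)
Your overall strategy matches the paper's, but the step you flag as ``the main obstacle'' --- the uniform lower bound $\liminf_l z^{(l)}_i > 0$ --- is a genuine gap as written: the sketch (``compare with a fixed competitor and exploit concavity'') is vague, and it is not clear how concavity of $t\mapsto t^p$ or $\log t$ yields a contradiction when one coordinate of the minimizing sequence tends to zero while $\xi_p(z^{(l)})$ is an unspecified interior point.

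The paper avoids this obstacle entirely, and the fix is a one-line change in your argument. Instead of translating so that merely $o\in\Int[z^{(l)}]$, translate by $\xi_p(z^{(l)})$ so that $\xi_p(z^{(l)})=o$; this automatically puts $o\in\Int[z^{(l)}]$ by Lemma~\ref{lemma phi has unique maximizer}, and (as you already note) the truncation $z_i\mapsto h_{[z]}(v_i)$ is compatible with a subsequent translation. Now only the upper bound (via Corollary~\ref{coro C0 estimate}) is needed to extract a convergent subsequence $z^{(l)}\to z^0$. Continuity of $I_q$ gives $I_q([z^0])=1$, so $[z^0]$ has nonempty interior; Lemma~\ref{lemma continuity of xi} then yields $\xi_p(z^0)=\lim\xi_p(z^{(l)})=o$, and since $\xi_p(z^0)\in\Int[z^0]$ one gets $z^0_i\geq h_{[z^0]}(v_i)>0$ \emph{for free}. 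No separate lower-bound argument is required, and Lemma~\ref{lemma 1041} applies directly. (Equivalently, your own final translation step --- shifting the limit $z^*$ by $\xi_p(z^*)$ --- already forces the translated limit into $\mathbb{R}^N_+$, so the lower bound along the sequence was never needed.)
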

\begin{proof}
	We consider the minimization problem \eqref{eq 1051}. Let $z^l\in \mathbb{R}^N$ be a minimizing sequence; that is $I_q([z^l])=|\mu|$ and
	\begin{equation}
		\lim_{l\rightarrow \infty} \Phi_p(z^l, \xi_p(z^l)) = \inf\{\Phi_p(z,\xi_p(z)): z\in \mathbb{R}^N, I_q([z])=|\mu|\}.
	\end{equation}
	Note that by translation invariance of $I_q$ and the simple fact that $\Phi_p(z, \xi)=\Phi_p(z', o)$ where $z'_j=z_j-\xi\cdot v_j$, we can assume without loss of generality that $\xi_p(z^l)=o$. Moreover, by the definition of $\Phi_p$, it must be the case that
	\begin{equation}
	\label{eq 1052}
		z^l_j = h_{[z^l]}(v_j).
	\end{equation}
	The fact that $o=\xi_p(z^l)\in \Int [z^l]$ now implies that $z_j^l>0$. Since $I_q([z^l])=|\mu|$ is finite, Corollary \ref{coro C0 estimate} implies that the outer radii of $[z^l]$ are uniformly bounded. This, when combined with the fact that $o\in [z^l]$, implies that $[z^l]$ is uniformly bounded, which by \eqref{eq 1052} implies that $z^l$ is uniformly bounded in $\mathbb{R}^N$ in $l$. Therefore, we may (by potentially taking a subsequence) assume that $z^l\rightarrow z^0$ for some $z^0\in \mathbb{R}^N$. By continuity of $I_q$, we have $I_q([z^0])=|\mu|$, which implies that $[z^0]$ contains nonempty interior. Lemma \ref{lemma continuity of xi} now implies that $\xi_p(z^0) = \lim_{l\rightarrow \infty}\xi_p(z^l)=o$. This and the fact that $\xi_p(z^0)\in \Int [z^0]$ imply that $z^0\in \mathbb{R}_+^N$. Moreover, by the definition of $\Phi_p$, we have
	\begin{equation}
		\Phi_p(z^0,o)=\lim_{l\rightarrow \infty} \Phi_p(z^l, o) = \inf\{\Phi_p(z,\xi_p(z)): z\in \mathbb{R}^N, I_q([z])=|\mu|\}.
	\end{equation}
	Lemma \ref{lemma 1041} now implies the existence of $P$.
\end{proof}

When $0<p<1$, Theorem \ref{thm discrete} in fact holds even without the assumption that $v_1,\dots v_N\in \sn$ are in general position in dimension $n$.
\begin{theorem}
\label{thm discrete'}
	Let $0< p<1$, $q>0$, and $\mu$ be as given in \eqref{eq 1042}. Then there exists a polytope $P\in \mathcal{P}(v_1,\dots, v_N)$ containing the origin in its interior such that
	\begin{equation}
		F_{p,q}(P,\cdot)=\mu.
	\end{equation}
\end{theorem}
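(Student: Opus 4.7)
The plan is to follow the same scheme as the proof of Theorem \ref{thm discrete}, replacing the use of Corollary \ref{coro C0 estimate} (which required the general position hypothesis) by a direct estimate that is available precisely because $p>0$. I would again consider the optimization problem \eqref{eq 1051} and pick a minimizing sequence $z^l \in \mathbb{R}^N$ with $I_q([z^l])=|\mu|$. By translating (using translation invariance of $I_q$ together with the identity $\Phi_p(z,\xi)=\Phi_p(z',o)$ for $z'_j=z_j-\xi\cdot v_j$) I may assume $\xi_p(z^l)=o$, which automatically gives $z_j^l=h_{[z^l]}(v_j)>0$ for every $j$. The infimum is finite because any polytope $P_0\in\mathcal{P}(v_1,\dots,v_N)$ containing $o$ in its interior, suitably rescaled to satisfy $I_q(P_0)=|\mu|$, provides a competitor, and $\Phi_p(\cdot,\xi_p(\cdot))\geq 0$ since $p>0$.

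The key observation, and what allows the argument to go through without general position, is the direct uniform bound
\[
\sum_{j=1}^N (z_j^l)^p \alpha_j=\Phi_p(z^l,o)\le C.
\]
Since the summands are nonnegative and $\alpha_j>0$, this immediately gives $z_j^l \le (C/\alpha_j)^{1/p}$ for every $j$, so the sequence $z^l$ is uniformly bounded in $\mathbb{R}^N$.

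From here the proof mirrors the end of Theorem \ref{thm discrete}: I extract a convergent subsequence $z^l\to z^0\in\mathbb{R}^N$; continuity of $I_q$ with respect to Hausdorff convergence gives $I_q([z^0])=|\mu|>0$, hence $[z^0]$ has nonempty interior; Lemma \ref{lemma continuity of xi} yields $\xi_p(z^0)=o$, and Lemma \ref{lemma phi has unique maximizer} then forces $o\in\Int[z^0]$, so $h_{[z^0]}(v_j)>0$ and $z^0\in\mathbb{R}^N_+$. Lemma \ref{lemma continuity of xi} also gives $\Phi_p(z^0,o)=\lim_l \Phi_p(z^l,o)$, which equals the infimum, so $z^0$ is a minimizer. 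An application of Lemma \ref{lemma 1041} then produces the desired polytope $P\in\mathcal{P}(v_1,\dots,v_N)$ with $o$ in its interior satisfying $F_{p,q}(P,\cdot)=\mu$.

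The main obstacle one might worry about is the absence of the general position assumption, which was essential to the $C^0$ estimate supplied by Corollary \ref{coro C0 estimate} in the proof of Theorem \ref{thm discrete}. What rescues the argument here, and what fails at $p=0$, is simply that $\Phi_p(\cdot,o)=\sum z_j^p \alpha_j$ is a sum of positive powers with positive weights and therefore provides a direct coordinatewise bound on $z^l$; the analogous quantity $\Phi_0(\cdot,o)=\sum (\log z_j)\alpha_j$ at $p=0$ allows individual coordinates to blow up provided others become small to compensate, which is precisely why the chord log-Minkowski problem must instead rely on the more delicate general position machinery of Section \ref{sec general position}.
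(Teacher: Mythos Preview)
Your proposal is correct and follows essentially the same route as the paper: both replace the appeal to Corollary~\ref{coro C0 estimate} by the observation that, once $\xi_p(z^l)=o$, the finiteness of $\Phi_p(z^l,o)=\sum_j (z^l_j)^p\alpha_j$ directly bounds each coordinate since all summands are nonnegative. The paper phrases this as $\Phi_p(z^l,o)\ge (\max_j z^l_j)^p\min_j\alpha_j$, while you bound each $z^l_j$ individually by $(C/\alpha_j)^{1/p}$; these are the same idea. Your closing remark about why the argument breaks down at $p=0$ also matches the paper's explanation. One small wording issue: translating so that $\xi_p(z^l)=o$ does not by itself force $z^l_j=h_{[z^l]}(v_j)$ (redundant facets could still have $z^l_j>h_{[z^l]}(v_j)$); that equality is a separate harmless reduction, and in any case your bound never uses it---you only need $z^l_j>0$, which follows from $o\in\Int[z^l]$.
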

\begin{proof}
	The proof for Theorem \ref{thm discrete} remains valid aside from the fact that we can no longer use Corollary \ref{coro C0 estimate} to show $z^l$ is uniformly bounded in $\mathbb{R}^N$.
	
	We show in this proof that in the case of $0<p<1$, the uniform boundedness of $z^l$ can still be obtained.
	
	Set $\zeta(r)=(r,r,\dots, r)\in \mathbb{R}^N$. Then, by the homogeneity of $I_q$, we may find $r_0>0$ such that $I_q([\zeta(r_0)])=|\mu|$. Therefore,
	\begin{equation}
	\label{eq 10610}
		\lim_{l\rightarrow \infty} \Phi_p(z^l, o)\leq \Phi_p(\zeta(r_0), \xi_p(\zeta(r_0)))= \sum_{j=1}^N \left(r_0-\xi_p\left(\zeta(r_0)\right)\cdot v_j\right)^p\alpha_j\leq \sum_{j=1}^N (2r_0)^p\alpha_j <\infty,
	\end{equation}
	where we used the fact that $\xi_p(\zeta(r_0))\in \Int [\zeta(r_0)]$ (by Lemma \ref{lemma phi has unique maximizer}).
	
	On the other hand, if we set $L_l=\max_j z_j^l$, then
	\begin{equation}
	\label{eq 10611}
		\Phi_p(z^l, o)= \sum_{j=1}^n (z^l_j)^p \alpha_j \geq L_l^p \min_j \alpha_j.
	\end{equation}
	The uniform boundedness of $z^l$ now comes from \eqref{eq 10610}, \eqref{eq 10611}, and the definition of $L_l$.
\end{proof}
\begin{remark}
	The proof for uniform upper bound in Theorem \ref{thm discrete'} would not work for $p=0$ since the logarithm function takes both positive and negative values.
\end{remark}

\section{The general case}

Throughout this section, we assume $p\in [0,1)$ and $q>0$ unless otherwise specified. Recall that, for $\Omega=\{v_1,\dots, v_N\}$ and for every $z\in \mathbb{R}^{N}$, we use $[z, \Omega]$ to denote the Wulff shape generated by $z$ on $\Omega$; that is
\begin{equation}
	[z,\Omega] = \{x\in \rn: x\cdot v_i\leq z_i, \quad i=1,\dots, N\}.
\end{equation}
For the purpose of the section, we need to explicitly mention the different underlying $\Omega$ in different expressions (as they change between contexts).

 Let $\mu$ be a finite Borel measure (not necessarily discrete) on $\sn$ that is not concentrated in any closed hemisphere. The purpose of the section is to solve the $L_p$ chord Minkowski problem for $\mu$; that is, to solve
\begin{equation}
	F_{p,q}(K,\cdot) = \mu,
\end{equation}
via an approximation scheme based on the polytopal solution we obtained in Section \ref{sec discrete}.

We first construct a sequence of discrete measures whose support sets are in general position such that the sequence of discrete measures converges to $\mu$ weakly.

For each positive integer $m$, it is simple to see that there is a way to partition $\sn$ into sufficiently many pieces so that the diameter of each small piece is less than $\frac{1}{m}$; that is, there exists $\mathcal{N}_m>0$ and a partition of $\sn$, denoted by $U_{1,m},\dots, U_{\mathcal{N}_m, m}$ such that $d(U_{i,m})<\frac{1}{m}$ and $U_{i,m}$ contains nonempty interior (relative to the topology of $\sn$). We may choose $v_{i,m}\in U_{i,m}$ so that $v_{1,m},\dots, v_{\mathcal{N}_m, m}$ are in general position. When $m$ is large, it is clear that the vectors $v_{1,m},\dots, v_{\mathcal{N}_m, m}$ cannot be contained in any closed hemisphere.

We define the discrete measure $\mu_m$ on $\sn$ by
\begin{equation}
	\mu_m=\sum_{i=1}^{\mathcal{N}_m} \left(\mu(U_{i,m})+\frac{1}{\mathcal{N}_m^2}\right)\delta_{v_{i,m}},
\end{equation}
and
\begin{equation}
\label{eq 1059}
	\overline{\mu}_m = \frac{|\mu|}{|\mu_m|} \mu_m.
\end{equation}
Denote by $\Omega_m$ the support of the discrete measure $\mu_m$; that is,
\begin{equation}
	\Omega_m=\{v_{1,m},\dots, v_{\mathcal{N}_m,m}\}\subset \sn.
\end{equation}
It is clear that $\overline{\mu}_m$ is a discrete measure on $\sn$ satisfying the conditions in Theorem \ref{thm discrete} and $\overline{\mu}_m\rightharpoonup \mu$ weakly. Therefore, by Theorem \ref{thm discrete}, there exist polytopes $P_m$ containing the origin in their interiors such that
\begin{equation}
\label{eq 1054}
	F_{p,q}(P_m, \cdot) = \overline{\mu}_m.
\end{equation}

A careful examination of the proofs for Theorem \ref{thm discrete} and Lemma \ref{lemma 1041} immediately reveals that $P_m$ is a rescaled version of $[z^m,\Omega_m]$ where $z^m\in \mathbb{R}_+^{\mathcal{N}_m}$ satisfies $\xi_{p, \overline{\mu}_m}(z^m)=o$, $I_q([z^m,\Omega_m])=|\mu_m|$ and
\begin{equation}
\label{eq 6145}
		\Phi_{p,\overline{\mu}_m}(z^m,o)=\inf\{\Phi_{p,\overline{\mu}_m}(z,\xi_{p,\overline{\mu}_m}(z)): z\in \mathbb{R}^{\mathcal{N}_m}, I_q([z,\Omega_m])=|\mu_m|\}.
	\end{equation}
In particular, when $|\mu|=1$ (and consequently $|\overline{\mu}_m|=1$), we have,
	\begin{equation}
	\label{eq 10410}
		P_m=\begin{cases}
			\displaystyle\left(\frac{\Phi_{p,\overline{\mu}_m}(z^m,o)}{n+q-1}\right)^{\frac{1}{n+q-p-1}}[z^m,\Omega_m], &\text{ if } p\in (0,1),\\
			\displaystyle\left(\frac{1}{n+q-1}\right)^{\frac{1}{n+q-1}}[z^m,\Omega_m] &\text{ if } p=0.
		\end{cases}
	\end{equation}

\begin{lemma}
\label{lemma 1051}
	If $P_m$ in \eqref{eq 1054} are uniformly bounded and $I_q(P_m)>c_0$ for some constant $c_0>0$, then there exists a convex body $K \in \mathcal{K}^n$ with $o\in K$ such that
	\begin{equation}
	\label{eq 1056}
		F_{p,q}(K,\cdot) = \mu.
	\end{equation}
\end{lemma}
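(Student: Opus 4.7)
The plan is to extract a limit $K$ from the sequence $P_m$ via the Blaschke selection theorem and then transfer the Minkowski identity \eqref{eq 1054} to $K$ using the weak continuity of the $L_p$ chord measure established in Proposition \ref{prop weak continuity}. There are essentially three things to check: that a subsequential Hausdorff limit exists, that this limit is a genuine convex body containing the origin, and that its $L_p$ chord measure equals $\mu$.

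First I would use uniform boundedness of $P_m$ together with the fact that each $P_m$ contains $o$ in its interior. The Blaschke selection theorem then yields a subsequence (still denoted $P_m$) converging in Hausdorff metric to some compact convex set $K$ with $o \in K$. Second, I would upgrade $K$ to a convex body by using the continuity of $I_q$ with respect to Hausdorff metric (recalled at the end of Section \ref{preliminary invariants}): the hypothesis $I_q(P_m) > c_0 > 0$ passes to the limit as $I_q(K) \geq c_0 > 0$. Since $q > 0$, if $K$ were contained in a hyperplane, then for almost every $\ell \in \mathscr{L}^n$ the chord $K\cap \ell$ would be a single point and hence $|K\cap \ell|^q = 0$, forcing $I_q(K) = 0$. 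Therefore $K$ must have nonempty interior, i.e., $K \in \mathcal{K}^n$ with $o \in K$.

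Next, since $p \in [0,1)$ and $o \in P_m \cap K$ with $P_m \to K$ in Hausdorff metric, Proposition \ref{prop weak continuity} gives
\begin{equation*}
    F_{p,q}(P_m, \cdot) \rightharpoonup F_{p,q}(K, \cdot) \quad \text{weakly on } \sn.
\end{equation*}
On the other hand, the partition construction in \eqref{eq 1059}, together with the normalization that matches total masses and the fact that each $U_{i,m}$ has diameter less than $1/m$, is tailored so that $\overline{\mu}_m \rightharpoonup \mu$ weakly on $\sn$. Combining this with \eqref{eq 1054} and the uniqueness of weak limits of finite Borel measures on a compact space then yields $F_{p,q}(K,\cdot) = \mu$, which is \eqref{eq 1056}.

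The main obstacle is not inside this lemma but in verifying its two hypotheses in the sequel: establishing uniform outer bounds on $P_m$ and a uniform positive lower bound $I_q(P_m) > c_0$. Once those a priori estimates are proved (which in the $p=0$ regime will require the subspace mass inequality \eqref{eq 1068}, and in the $0 < p < 1$ regime will exploit the coercivity coming from a positive exponent), the present lemma is a routine compactness-plus-continuity step. No additional technicalities beyond Blaschke selection, continuity of $I_q$, and the weak continuity of $F_{p,q}$ are required here.
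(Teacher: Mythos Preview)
Your proof is correct and follows essentially the same approach as the paper: Blaschke selection to extract a subsequential Hausdorff limit $K$ containing the origin, continuity of $I_q$ to ensure $I_q(K)>0$ and hence nonempty interior, and then Proposition~\ref{prop weak continuity} together with $\overline{\mu}_m\rightharpoonup\mu$ to pass \eqref{eq 1054} to the limit. The paper's proof is slightly terser but the argument is identical in substance.
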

\begin{proof}
	By the Blaschke selection theorem, there exists a subsequence $P_{m_j}$ such that $P_{m_j}\rightarrow K$ for some compact convex set $K$ containing the origin. By the continuity of $I_q$ and the fact that $I_q(P_m)>c_0$, we have $I_q(K)>0$. This in turn implies that $K$ has nonempty interior. Equation \eqref{eq 1056} now readily follows from taking the limit of \eqref{eq 1054} on both sides and Proposition \ref{prop weak continuity}.
\end{proof}

We require the following lemma.

\begin{lemma}
\label{lemma ball bound}
	Let $v_{1,m}, \dots v_{\mathcal{N}_m, m}\in \sn$ be as given above. Consider
	\begin{equation}
	\label{eq 10510}
		Q_m = \bigcap_{i=1}^{\mathcal{N}_m}\{x\in \rn: x\cdot v_{i,m}\leq 1\}.
	\end{equation}
	Then, for sufficiently large $m$, we have
	\begin{equation}
	\label{eq 1057}
		B\subset Q_m\subset 2B,
	\end{equation}
	where $B$ is the centered unit ball.
\end{lemma}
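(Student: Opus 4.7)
The plan is to verify the two inclusions separately. The left inclusion $B\subset Q_m$ is essentially free from the Cauchy--Schwarz inequality and holds for every $m$: if $|x|\leq 1$, then for every $i$ we have $x\cdot v_{i,m}\leq |x|\,|v_{i,m}|\leq 1$, so $x\in Q_m$. No property of the partition is used here.

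For the right inclusion $Q_m\subset 2B$, the key input is that the pieces $U_{1,m},\dots,U_{\mathcal{N}_m,m}$ were chosen to partition $\sn$ with diameters less than $1/m$, and each normal $v_{i,m}$ lies in $U_{i,m}$. Consequently, the family $\{v_{i,m}\}$ is a $(1/m)$-net in $\sn$: for every $u\in\sn$ there exists $i\in\{1,\dots,\mathcal{N}_m\}$ with $|u-v_{i,m}|<1/m$.

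The main (and essentially only) step is then a contradiction argument. Suppose $x\in Q_m$ with $|x|>2$ and set $u=x/|x|\in\sn$. Choose $i$ with $|u-v_{i,m}|<1/m$. Expanding
\begin{equation*}
u\cdot v_{i,m}=1-\tfrac{1}{2}|u-v_{i,m}|^2>1-\tfrac{1}{2m^2},
\end{equation*}
one obtains
\begin{equation*}
x\cdot v_{i,m}=|x|\,(u\cdot v_{i,m})>2\left(1-\tfrac{1}{2m^2}\right)=2-\tfrac{1}{m^2}.
\end{equation*}
For all $m\geq 2$ the right side exceeds $1$, contradicting $x\in Q_m$ (which forces $x\cdot v_{i,m}\leq 1$). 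Hence $|x|\leq 2$ for every $x\in Q_m$, which is exactly $Q_m\subset 2B$.

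I do not anticipate any real obstacle: the statement is a quantitative version of the trivial fact that a fine enough spherical net forces the associated half-space intersection to approximate the unit ball. The only small point worth being explicit about is that the threshold on $m$ can be taken as $m\geq 2$ (depending only on the mesh choice $1/m$), independently of $\mathcal{N}_m$ or the specific selection of representatives $v_{i,m}\in U_{i,m}$.
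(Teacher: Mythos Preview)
Your proof is correct and follows essentially the same approach as the paper: both use that the $v_{i,m}$ form a $(1/m)$-net in $\sn$ to find, for each direction $u$, a normal $v_{i,m}$ with $u\cdot v_{i,m}$ close to $1$, and then bound the radial function of $Q_m$ by $2$. Your version is slightly sharper in that you use the identity $u\cdot v=1-\tfrac12|u-v|^2$ to extract an explicit threshold $m\ge 2$, whereas the paper simply invokes a sufficiently large $N_0$ to guarantee $u\cdot v_{i_m,m}>1/2$; the underlying argument is the same.
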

\begin{proof}
	Only the right side of \eqref{eq 1057} requires a proof.
	
	For each $u\in \sn$, since $U_{i,m}$ forms a partition of $\sn$, there must exist $i_m$ such that $u\in U_{i_m,m}$. Recall that $d(U_{i,m})<\frac{1}{m}$. Hence, we may choose $N_0>0$ (independent of $u$) such that for each $m>N_0$,
	\begin{equation}
		u\cdot v_{i_m,m}>1/2.
	\end{equation}
	Since $\rho_{Q_m}(u)u\in Q_m$, we have
	\begin{equation}
		\rho_{Q_m}(u)/2<\rho_{Q_m}(u)u \cdot v_{i_m,m}\leq 1.
	\end{equation}
	Hence $\rho_{Q_m}<2$ for each $m>N_0$, which proves the desired inequality.
\end{proof}

With a slight abuse of notation, for $\xi\in K$, we will write
\begin{equation}
	\Phi_{p, \mu}(K,\xi) = \begin{cases}
		\int_{\sn} h_{K-\xi}^p d\mu, &\text{ if } p\in (0,1),\\
		\int_{\sn} \log h_{K-\xi}d\mu, &\text{ if } p=0.
	\end{cases}
\end{equation}
Note that when $\mu$ is a discrete measure, $\Omega=\{v_1,\dots, v_N\}$ is the support of $\mu$,  and $z\in \mathbb{R}^N$ satisfies $z_j = h_{[z,\Omega]}(v_i)$, we have 
\begin{equation}
	\Phi_{p,\mu}([z,\Omega], \xi) =
	\Phi_{p,\mu}(z, \xi).
\end{equation}
That is: in this special case, $\Phi_{p,\mu}([z,\Omega], \xi)$ is precisely $\Phi_{p,\mu}(z,\xi)$ defined in Section \ref{sec discrete}. 

With the help of Lemma \ref{lemma ball bound}, we have the following estimate.
\begin{lemma}
\label{lemma bound phi}
	Let $P_m$ be as given in \eqref{eq 1054} and $z^m$ be the minimizer to \eqref{eq 6145} with $\xi_{p,\overline{\mu}_m}(z^m)=0$.
	If $|\mu|=1$ (and consequently $|\overline{\mu}_m|=1$), then there exists $c_0>0$ independent of $m$, such that
	\begin{equation}
		\Phi_{p, \overline{\mu}_m}(P_m,o)<c_0.
	\end{equation}
\end{lemma}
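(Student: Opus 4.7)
The plan is to exploit the minimality of $z^m$ in \eqref{eq 6145} by testing against a uniformly bounded competitor built from $Q_m$, and then to convert the resulting bound on $\Phi_{p,\overline{\mu}_m}(z^m,o)$ into a bound on $\Phi_{p,\overline{\mu}_m}(P_m,o)$ via the explicit rescaling formula \eqref{eq 10410}.

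Concretely, for each sufficiently large $m$, I would pick $r_m>0$ so that the constant vector $\zeta(r_m)=(r_m,\dots,r_m)\in \mathbb{R}^{\mathcal{N}_m}$ satisfies
\[
	I_q([\zeta(r_m),\Omega_m]) \;=\; I_q(r_m Q_m) \;=\; r_m^{n+q-1}\, I_q(Q_m) \;=\; |\mu_m|.
\]
Since $|\mu_m|$ is bounded between $1$ and $2$ for large $m$, and Lemma \ref{lemma ball bound} combined with the homogeneity and monotonicity of $I_q$ gives $I_q(B)\leq I_q(Q_m)\leq 2^{n+q-1} I_q(B)$, the scalar $r_m$ lies in a compact positive interval independent of $m$. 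The admissibility of $\zeta(r_m)$ together with the minimality of $z^m$ in \eqref{eq 6145} then yields
\[
	\Phi_{p,\overline{\mu}_m}(z^m,o) \;\leq\; \Phi_{p,\overline{\mu}_m}\bigl(\zeta(r_m),\xi_{p,\overline{\mu}_m}(\zeta(r_m))\bigr).
\]
Since $\xi_{p,\overline{\mu}_m}(\zeta(r_m))\in \Int[\zeta(r_m),\Omega_m]\subset 2r_m B$, each factor $r_m-\xi_{p,\overline{\mu}_m}(\zeta(r_m))\cdot v_{i,m}$ lies in $(0,3r_m]$; combined with $|\overline{\mu}_m|=1$, this produces the uniform bound $\Phi_{p,\overline{\mu}_m}(z^m,o)\leq (3r_m)^p$ when $p\in(0,1)$ and $\Phi_{0,\overline{\mu}_m}(z^m,o)\leq \log(3r_m)$ when $p=0$.

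To transfer this to a bound on $\Phi_{p,\overline{\mu}_m}(P_m,o)$, I would use \eqref{eq 791} to identify $\Phi_{p,\overline{\mu}_m}([z^m,\Omega_m],o)=\Phi_{p,\overline{\mu}_m}(z^m,o)$, and then apply the homogeneity $h_{\lambda K}=\lambda h_K$ together with the explicit scaling $P_m=\lambda_m [z^m,\Omega_m]$ from \eqref{eq 10410}. For $p\in(0,1)$ this gives $\Phi_{p,\overline{\mu}_m}(P_m,o)=\lambda_m^p\,\Phi_{p,\overline{\mu}_m}(z^m,o)$ with $\lambda_m^{n+q-p-1}=\Phi_{p,\overline{\mu}_m}(z^m,o)/(n+q-1)$, which is bounded above once $\Phi_{p,\overline{\mu}_m}(z^m,o)$ is. For $p=0$ the scaling factor in \eqref{eq 10410} is an absolute constant $\lambda_0$, so $\Phi_{0,\overline{\mu}_m}(P_m,o)=\log\lambda_0+\Phi_{0,\overline{\mu}_m}(z^m,o)$ is bounded above as well.

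The main obstacle is the uniform two-sided control on $Q_m$, which is precisely what Lemma \ref{lemma ball bound} was designed to supply; once $r_m$ is trapped in a compact positive interval, both the minimality test and the algebraic conversion via \eqref{eq 10410} are routine.
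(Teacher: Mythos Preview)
Your argument is correct and follows essentially the same route as the paper: test the minimizer $z^m$ against the rescaled cube $r_mQ_m$ (whose scale is trapped by Lemma~\ref{lemma ball bound}), bound the competitor's value using $\xi_{p,\overline\mu_m}(\zeta(r_m))\in \operatorname{int}[\zeta(r_m),\Omega_m]\subset 2r_mB$, and then translate the bound on $\Phi_{p,\overline\mu_m}(z^m,o)$ into one on $\Phi_{p,\overline\mu_m}(P_m,o)$ via the scaling in \eqref{eq 10410} together with \eqref{eq 791}. The paper carries out the identical steps with the cosmetic difference that it writes the bound as $(4r_0)^p$ (resp.\ $\log(4r_0)$) rather than your $(3r_m)^p$, and it leaves the final conversion through \eqref{eq 10410} as a one-line remark, whereas you spell out the exponent arithmetic explicitly.
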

\begin{proof}
	Let $Q_m$ be as given in \eqref{eq 10510}. Consider $rQ_m$ for $r>0$. Note that by Lemma \ref{lemma ball bound}, for sufficiently large $m$, we have $rB\subset rQ_m\subset 2rB$. By the homogeneity of $I_q$, there exists $r_0(m)>0$ such that
\begin{equation}
	I_q(r_0(m)Q_m)=1.
\end{equation}
Since $rB\subset rQ_m$, we have
\begin{equation}
	r_0(m)^{n+q-1}I_q(B)=I_q(r_0(m)B)\leq I_q(r_0(m)Q_m)=1.
\end{equation}
Therefore, $r_0(m)\leq r_0$ for some constant $r_0$ independent of $m$.

Since $z^m$ is a minimizer and using the fact that $r_0(m)Q_m\subset 2r_0(m)B\subset 2r_0B$, we have
\begin{equation}
\label{eq 1047}
\begin{aligned}
	&\Phi_{p,\overline{\mu}_m}(z^m,o)\\
	&\leq \begin{cases}
		\displaystyle\int_\sn h_{r_0(m)Q_m-\xi_{p,\overline{\mu}_m}(r_0(m)Q_m)}^pd\overline{\mu}_m \leq \int_\sn h^p_{4r_0B} d\overline{\mu}_m = (4r_0)^p, &\text{ if } p\in (0,1),\\
		\displaystyle\int_{\sn} \log h_{r_0(m)Q_m-\xi_{0,\overline{\mu}_m}(r_0(m)Q_m)}d\overline{\mu}_m\leq \int_\sn \log h_{4r_0B} d\overline{\mu}_m = \log ((4r_0)),&\text{ if } p=0.
	\end{cases}
\end{aligned}	
\end{equation}
The desired bound now follows from \eqref{eq 10410} and the definition of $\Phi_{p,\overline{\mu}_m}$.
\end{proof}

\begin{remark}
\label{remark 1061}
	In the proof of Lemma \ref{lemma bound phi}, in fact, we have shown something stronger:
	\begin{equation}
		\Phi_{p,\overline{\mu}_m}(z^m,o)
	\end{equation}
	is also uniformly bounded from above. Here $z^m\in \mathbb{R}_+^{\mathcal{N}_m}$ with $\xi_{p, \overline{\mu}_m}(z^m)=o$ is the minimizer to \eqref{eq 6145}.
\end{remark}

The rest of the section is devoted to verifying the hypotheses in Lemma \ref{lemma 1051}. Since there is a major difference between the $p=0$ case and the $0<p<1$ case, we shall prove them separately in two different subsections.

\subsection{The $0<p<1$ case}
Throughout this subsection, we assume $0<p<1$ and $q>0$, both of which are fixed.

It is a well-known fact that for each finite Borel measure $\mu$ on $\sn$ that is not concentrated in any closed hemisphere, there exists a constant $\mathfrak{C}_p(\mu)>0$ such that
\begin{equation}
	\int_{\sn}(u\cdot v)_+^pd\mu(v)\geq \mathfrak{C}_p(\mu),
\end{equation}
uniformly for each $u\in \sn$. We prove in the next lemma that for our choice of $\overline{\mu}_m$, the constants $\mathfrak{C}_p(\overline{\mu}_m)$ can be chosen uniformly.

\begin{lemma}
\label{lemma uniform control of Lp}
	Let $\overline{\mu}_m$ be as given in \eqref{eq 1059}. Then, for sufficiently large $m$, we have
	\begin{equation}
		\int_{\sn}(u\cdot v)_+^pd\overline{\mu}_m(v)\geq \frac{1}{2} \mathfrak{C}_p(\mu).
	\end{equation}
\end{lemma}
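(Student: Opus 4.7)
The plan is to exploit two ingredients: (i) the definition of $\overline{\mu}_m$, which essentially replaces the mass of $\mu$ on each small cell $U_{i,m}$ (of diameter less than $1/m$) by a point mass at $v_{i,m}$, and (ii) the $p$-Hölder continuity of the scalar function $t \mapsto (t)_+^p$ when $0<p<1$, which gives the elementary inequality $|a_+^p - b_+^p| \leq |a-b|^p$ for all $a,b \in \mathbb{R}$. This Hölder estimate is crucial because it provides a control that is \emph{uniform in $u$}, which is exactly what is needed to pass from pointwise (weak) convergence to a uniform lower bound.

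First, I would write out the integral explicitly. By the definitions,
\begin{equation*}
\int_{\sn}(u\cdot v)_+^p d\overline{\mu}_m(v) = \frac{|\mu|}{|\mu_m|}\sum_{i=1}^{\mathcal{N}_m} (u\cdot v_{i,m})_+^p \left(\mu(U_{i,m}) + \frac{1}{\mathcal{N}_m^2}\right) \geq \frac{|\mu|}{|\mu_m|}\sum_{i=1}^{\mathcal{N}_m} (u\cdot v_{i,m})_+^p \mu(U_{i,m}).
\end{equation*}
Next, for each $v \in U_{i,m}$, $|v - v_{i,m}| < 1/m$, so $|u\cdot v - u\cdot v_{i,m}| < 1/m$ and the Hölder inequality above gives $(u\cdot v_{i,m})_+^p \geq (u\cdot v)_+^p - (1/m)^p$. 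Integrating over $U_{i,m}$ with respect to $\mu$ and summing in $i$ yields
\begin{equation*}
\sum_{i=1}^{\mathcal{N}_m} (u\cdot v_{i,m})_+^p \mu(U_{i,m}) \geq \int_{\sn}(u\cdot v)_+^p d\mu(v) - \frac{|\mu|}{m^p} \geq \mathfrak{C}_p(\mu) - \frac{|\mu|}{m^p},
\end{equation*}
where the last inequality uses the definition of $\mathfrak{C}_p(\mu)$.

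Finally, observe that $|\mu_m| = |\mu| + 1/\mathcal{N}_m$, so $|\mu|/|\mu_m| \to 1$ as $m \to \infty$. Combining the two estimates,
\begin{equation*}
\int_{\sn}(u\cdot v)_+^p d\overline{\mu}_m(v) \geq \frac{|\mu|}{|\mu_m|}\left(\mathfrak{C}_p(\mu) - \frac{|\mu|}{m^p}\right),
\end{equation*}
and the right-hand side exceeds $\frac{1}{2}\mathfrak{C}_p(\mu)$ for all sufficiently large $m$, uniformly in $u \in \sn$. There is no major obstacle in this argument; the only subtle point is that uniformity in $u$ is free because the Hölder error $(1/m)^p$ does not depend on $u$, which in turn relies crucially on the assumption $p > 0$ (the argument would fail at $p = 0$, consistent with the fact that the $p=0$ case is treated separately in the paper).
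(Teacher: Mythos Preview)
Your proof is correct and follows essentially the same approach as the paper's: both exploit the small diameter of the cells $U_{i,m}$ together with a continuity estimate on $(u,v)\mapsto (u\cdot v)_+^p$ that is uniform in $u$, then sum over the partition. The only cosmetic difference is that the paper invokes uniform continuity of $(u,v)\mapsto (u\cdot v)_+^p$ on the compact set $\sn\times\sn$ to get an $\varepsilon$-estimate and deduces full uniform convergence $\overline{g}_m\rightrightarrows g$, whereas you use the explicit H\"older modulus $|a_+^p-b_+^p|\le |a-b|^p$ to obtain the quantitative one-sided bound $m^{-p}$ directly; both yield the required uniformity in $u$ for the same reason.
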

\begin{proof}
	For notational simplicity, let
	\begin{equation}
		g_m(u) =  \int_{\sn}(u\cdot v)_+^pd\mu_m(v),
	\end{equation}
	\begin{equation}
		\overline{g}_m(u)=\int_{\sn}(u\cdot v)_+^pd\overline{\mu}_m(v) = \frac{|\mu|}{|\mu_m|}g_m(u),
	\end{equation}
	and
	\begin{equation}
		g(u) = \int_{\sn}(u\cdot v)_+^pd\mu(v).
	\end{equation}
	Note that $g\geq \mathfrak{C}_p(\mu)$ and as a consequence, it suffices to show $\overline{g}_m\rightrightarrows g$. To do that, one only needs to show $g_m\rightrightarrows g$.
	
	Let $\varepsilon>0$ be arbitrary.
	
	Note that the function $(u,v)\mapsto (u\cdot v)_+^{p}$ is uniformly continuous on $\sn\times \sn$. Therefore, for sufficiently large $m$ (independent of the choice of $u$), we have
	\begin{equation}
		|(u\cdot v)_+^{p}-(u\cdot v_{i,m})_+^{p}|<\varepsilon,
	\end{equation}
	for each $v\in U_{i,m}$. As a consequence,
	\begin{equation}
		\begin{aligned}
			|g_m(u)-g(u)|&=\left|\sum_{i=1}^{\mathcal{N}_m}\left[(u\cdot v_{i,m})_+^{p} \left(\mu(U_{i,m})+\frac{1}{\mathcal{N}_m^2}\right)-\int_{U_{i,m}}(u\cdot v)_+^{p}d\mu\right]\right|\\
			&\leq \varepsilon |\mu|+\frac{1}{\mathcal{N}_m}.
		\end{aligned}
	\end{equation}
	Note that the above estimate is independent of $u$. Since $\mathcal{N}_m\rightarrow \infty$, we conclude the desired uniform convergence.
\end{proof}

\begin{lemma}
\label{lemma 1052}
	If  $|\mu|=1$ (and consequently $|\overline{\mu}_m|=1$), the polytopes $P_m$ obtained in \eqref{eq 1054} are uniformly bounded and there exists $c_0>0$ such that $I_q(P_m)>c_0$.
\end{lemma}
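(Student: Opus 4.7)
The plan is to derive both conclusions from a single mass identity combined with the bounds of Lemmas \ref{lemma bound phi} and \ref{lemma uniform control of Lp}. Since $dF_{p,q}(K,\cdot)=h_K^{1-p}dF_q(K,\cdot)$, we have $\int h_K^p\,dF_{p,q}(K,\cdot)=\int h_K\,dF_q(K,\cdot)$, and the latter equals $(n+q-1)I_q(K)$ because $|G_q(K,\cdot)|=I_q(K)$ together with $G_q=\tfrac{1}{n+q-1}F_{0,q}$. Applied with $K=P_m$ and $F_{p,q}(P_m,\cdot)=\overline{\mu}_m$, this yields the key identity
\[
(n+q-1)\,I_q(P_m) \,=\, \Phi_{p,\overline{\mu}_m}(P_m,o).
\]

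For the uniform boundedness of $P_m$, I let $R_m:=\max\{|x|:x\in P_m\}$ and choose $u_m\in\sn$ with $R_m u_m\in P_m$. Then $h_{P_m}(v)\geq R_m(u_m\cdot v)_+$ on $\sn$, so Lemma \ref{lemma uniform control of Lp} combined with the identity above gives
\[
(n+q-1)I_q(P_m) \,=\, \Phi_{p,\overline{\mu}_m}(P_m,o) \,\geq\, R_m^p\!\int_{\sn}(u_m\cdot v)_+^p\,d\overline{\mu}_m(v) \,\geq\, \tfrac{1}{2}\mathfrak{C}_p(\mu)\,R_m^p
\]
for all sufficiently large $m$. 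Combining with the upper bound $\Phi_{p,\overline{\mu}_m}(P_m,o)\leq c_0$ from Lemma \ref{lemma bound phi} forces $R_m\leq (2c_0/\mathfrak{C}_p(\mu))^{1/p}$, so $P_m$ is contained in a fixed ball.

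For the strictly positive lower bound on $I_q(P_m)$, I invoke monotonicity and homogeneity of $I_q$: since $P_m\subset R_m B$, one has $I_q(P_m)\leq R_m^{n+q-1}I_q(B)$, i.e.\ $R_m\geq (I_q(P_m)/I_q(B))^{1/(n+q-1)}$. Substituting this back into the previous display produces the self-bounding estimate
\[
(n+q-1)\,I_q(P_m) \,\geq\, \tfrac{1}{2}\mathfrak{C}_p(\mu)\,I_q(B)^{-p/(n+q-1)}\,I_q(P_m)^{p/(n+q-1)}.
\]
Since $0<p<1<n+q-1$, the exponent $p/(n+q-1)$ is strictly less than $1$, and rearranging yields $I_q(P_m)\geq c_0$ for an explicit positive constant $c_0$ depending only on $p$, $q$, $n$, $\mu$. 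The only insight required is the mass identity at the outset; once that is in hand, everything reduces to a clean bootstrap between the two uniform estimates already at our disposal, and no further technical obstacle arises.
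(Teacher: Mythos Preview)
Your proof is correct, and in fact cleaner than the paper's for the lower bound on $I_q(P_m)$. The upper-bound argument is essentially identical to the paper's: both use $h_{P_m}(v)\geq R_m(u_m\cdot v)_+$ together with Lemma~\ref{lemma uniform control of Lp} and Lemma~\ref{lemma bound phi}.

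Where you diverge is in establishing $I_q(P_m)\geq c_0$. The paper returns to the variational structure: it bounds the auxiliary polytope $[z^m,\Omega_m]$ from above using Remark~\ref{remark 1061}, then uses the constraint $I_q([z^m,\Omega_m])=1$ to find a ball inside it, obtains a lower bound on $\Phi_{p,\overline{\mu}_m}(z^m,o)$, and finally transports this to $I_q(P_m)$ via the explicit rescaling formula~\eqref{eq 10410}. You bypass all of this by observing the mass identity $(n+q-1)I_q(P_m)=\Phi_{p,\overline{\mu}_m}(P_m,o)$, which follows directly from $F_{p,q}(P_m,\cdot)=\overline{\mu}_m$, the relation $dF_{p,q}=h^{1-p}dF_q$, and $|G_q(K,\cdot)|=I_q(K)$. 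This identity, combined with $P_m\subset R_mB$ and the already-proved inequality $\Phi_{p,\overline{\mu}_m}(P_m,o)\geq \tfrac12\mathfrak{C}_p(\mu)R_m^p$, yields a self-improving inequality for $I_q(P_m)$ that closes immediately since $p<n+q-1$. Your route is shorter, does not require going back to the minimizer $z^m$ or invoking~\eqref{eq 10410}, and exposes more transparently why the lower bound holds. The paper's route, on the other hand, keeps the variational scaffolding visible, which may be helpful conceptually when one later adapts the argument to $p=0$ where no such mass identity is available.

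One small point: your use of Lemma~\ref{lemma uniform control of Lp} is valid only for sufficiently large $m$, so strictly speaking your bounds hold for $m>M_0$; the finitely many remaining $P_m$ are handled trivially since each has nonempty interior. This is implicit in your write-up but worth stating.
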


\begin{proof}
We fix an arbitrary $m$ that is sufficiently large and prove that the desired bounds for $P_m$ can be chosen independent of $m$.

We first prove that $P_m$ are uniformly bounded (from above).

Let $ L(m)= \max_{\sn} h_{P_m}$. Then by definition of $\Phi_{p,\overline{\mu}_m}$, we have, for some $u\in \sn$,
\begin{equation}
\label{eq 1048}
	\Phi_{p,\overline{\mu}_m}(P_m,o) = \int_{\sn} h_{P_m}(v)^p d\overline{\mu}_m(v)\geq \int_{\sn} (L(m)u\cdot v)_+^p d\overline{\mu}_m(v)\geq L(m)^p\frac{1}{2}\mathfrak{C}_p(\mu),
\end{equation}
owing to Lemma \ref{lemma uniform control of Lp}. By Lemma \ref{lemma bound phi} and \eqref{eq 1048}, we conclude that $L(m)$ is uniformly bounded from above in $m$. By definition of $L(m)$, this in turn implies the uniform boundedness of $P_m$ (from above).

Let $z^m$ be the minimizer to \eqref{eq 6145} with $\xi_{p,\overline{\mu}_m}(z^m)=0$ and $L'(m)=\max_{\sn} h_{[z^m,\Omega_m]}$. Repeating the same argument, we have
\begin{equation}
\label{eq 1064}
	\Phi_{p,\overline{\mu}_m}(z^m,o)\geq \Phi_{p,\overline{\mu}_m}([z^m,\Omega_m],o)\geq L'(m)^p\frac{1}{2}\mathfrak{C}_p(\mu).
\end{equation}
Note here that in the first inequality, we used \eqref{eq 6151}. By Remark \ref{remark 1061}, we conclude that $[z^m,\Omega_m]$ also has a  uniform upper bound. Note that $I_q([z^m,\Omega_m])=1$. This implies that there must exist $c_*>0$ such that $[z^m,\Omega_m]$ contains a ball of radius $c_*$. In turn, since $o\in \Int [z^m,\Omega_m]$, this implies $L'(m)\geq c_*$ and as a consequence of \eqref{eq 1064}, we obtain a uniform lower bound for $\Phi_{p,\overline{\mu}_m}(z^m,o)$. The existence of $c_0$ now follows from \eqref{eq 10410} and the translation-invariance and monotonicity of $I_q$.
\end{proof}

\begin{theorem}\label{thm main thm p neq 0}
	Let $0<p<1$, $q>0$, and $\mu$ be a finite Borel measure on $\sn$ not concentrated in any closed hemisphere. Then, there exists $K\in \mathcal{K}^n$ with $o\in K$ such that
	\begin{equation}
		F_{p,q}(K,\cdot) = \mu.
	\end{equation}
\end{theorem}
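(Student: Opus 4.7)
The plan is to invoke the approximation machinery built up in the preceding lemmas, so that the theorem reduces to a simple chain of previously established facts. First I would use the homogeneity of $F_{p,q}(\cdot,\cdot)$ in its first argument, namely $F_{p,q}(tK,\cdot) = t^{n+q-p-1}F_{p,q}(K,\cdot)$, to reduce to the normalized case $|\mu|=1$: if $K_0$ solves the problem for $\mu/|\mu|$, then $tK_0$ solves it for $\mu$ with $t = |\mu|^{1/(n+q-p-1)}$, using $n+q-p-1\neq 0$ since $p<1$ and $q>0$.

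Once $|\mu|=1$, I would set up the discrete approximation exactly as described in the section. Partition $\sn$ into small pieces $U_{i,m}$ with $\mathrm{diam}(U_{i,m})<1/m$, choose representatives $v_{i,m}$ in general position, and form $\overline{\mu}_m$ as in \eqref{eq 1059}. Since $0<p<1$, Theorem \ref{thm discrete'} (or even Theorem \ref{thm discrete}, given our general-position choice) produces polytopes $P_m$ with $o\in \mathrm{int}\,P_m$ and
\begin{equation*}
F_{p,q}(P_m,\cdot) = \overline{\mu}_m.
\end{equation*}
By construction $\overline{\mu}_m \rightharpoonup \mu$ weakly.

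With those polytopes in hand, Lemma \ref{lemma 1052} provides precisely the two a priori bounds we need: the outer radii of $P_m$ are uniformly bounded above, and $I_q(P_m)>c_0$ for a constant $c_0>0$ independent of $m$. These are exactly the hypotheses of Lemma \ref{lemma 1051}, which then yields a convex body $K\in \mathcal{K}^n$ with $o\in K$ such that $F_{p,q}(K,\cdot)=\mu$ (after extracting a Hausdorff-convergent subsequence via Blaschke selection, using continuity of $I_q$ to guarantee that the limit has nonempty interior, and applying the weak continuity Proposition \ref{prop weak continuity}).

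The potentially delicate step---obtaining uniform $C^0$ bounds in the non-symmetric setting---has already been carried out in Lemma \ref{lemma 1052}, whose heart is the uniform lower bound $\int_\sn(u\cdot v)_+^p\,d\overline{\mu}_m(v)\geq \tfrac{1}{2}\mathfrak{C}_p(\mu)$ from Lemma \ref{lemma uniform control of Lp}. This estimate, combined with the uniform upper bound on $\Phi_{p,\overline{\mu}_m}(z^m,o)$ from Lemma \ref{lemma bound phi} (cf.\ Remark \ref{remark 1061}), controls $\max_{\sn}h_{P_m}$ from above, and the volume normalization $I_q([z^m,\Omega_m])=1$ forces a uniform inradius lower bound for $[z^m,\Omega_m]$, producing the lower bound $I_q(P_m)>c_0$ through the explicit rescaling \eqref{eq 10410}. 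Thus the proof is now a brief assembly of these ingredients, and I do not anticipate any additional obstacle beyond spelling out the normalization and the final limit argument.
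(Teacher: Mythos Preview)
Your proposal is correct and follows essentially the same approach as the paper: reduce to $|\mu|=1$ by homogeneity, then invoke Lemma~\ref{lemma 1052} to verify the hypotheses of Lemma~\ref{lemma 1051}. The paper's proof is in fact just those two sentences; your additional commentary correctly unpacks the content of those lemmas but adds nothing new.
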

\begin{proof}
	By homogeneity of $F_{p,q}$, it suffices to prove the case when $|\mu|=1$. In this case, the result follows immediately from Lemmas \ref{lemma 1051} and \ref{lemma 1052}.
\end{proof}

\subsection{The case $p=0$} \label{sec general p=0}The desired uniform bounds on $P_m$ in the case $p=0$ are much more complicated. This is caused by the fact that a uniform estimate as in Lemma \ref{lemma uniform control of Lp} is unavailable for the integral
\begin{equation}
	\int_{\sn}\log (u\cdot v)_+d\overline{\mu}_m(v).
\end{equation}
In fact, the above integral could well go to $-\infty$.

It turns out that the chord log-Minkowski problem (or the chord $L_0$-Minkowski problem) is heavily connected to subspace mass concentration phenomenon.

Throughout the rest of the section, we assume $1<q<n+1$ is fixed. We say that a given finite Borel measure $\mu$ satisfies the \emph{subspace mass inequality} if
\begin{equation}
\label{eq 1061}
	\frac{\mu(\xi_i\cap \sn)}{|\mu|} <\frac{i+\min\{i, q-1\}}{n+q-1},
\end{equation}
for each $i$ dimensional subspace $\xi_i\subset \rn$ and each $i=1, \dots, n-1$.

It was shown in Xi-LYZ \cite{XLYZ} that when restricting to origin-symmetric cases, the above subspace mass inequality is sufficient for the existence of solutions to the chord log-Minkowski problem:
\begin{theorem}[\cite{XLYZ}]
	Let $1<q<n+1$. If $\mu$ is an even finite Borel measure on $\sn$ that satisfies \eqref{eq 1061}, then there exists an origin-symmetric convex body $K$ in $\rn$ such that
	\begin{equation}
		G_q(K,\cdot)=\mu.
	\end{equation}
\end{theorem}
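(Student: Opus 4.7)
The plan is to attack the problem via a constrained variational formulation on the class $\ke$ of origin-symmetric convex bodies: minimize $\Phi_0(K):=\int_{\sn}\log h_K(v)\,d\mu(v)$ over $\{K\in\ke:I_q(K)=|\mu|\}$, extract a minimizer using the subspace mass inequality as the required a priori compactness tool, and identify it with a solution of $G_q(K,\cdot)=\mu$ through the Euler--Lagrange equation. The reason the even setting is much more tractable than the general (non-symmetric) case treated in Section \ref{sec general p=0} is that, by symmetry of $\mu$ and $K\in\ke$, the inner maximum in the translation variable $\xi$ of the min--max formulation of Section \ref{sec discrete} is automatically attained at $\xi=o$, collapsing the saddle-point problem to a pure minimization.

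The technical heart of the proof is a priori compactness: a minimizing sequence $K_j\in\ke$ with $I_q(K_j)=|\mu|$ has a subsequence converging to a body in $\ke$ (not degenerating to a lower-dimensional set). By John's theorem applied to origin-symmetric bodies, write $E_j\subset K_j\subset\sqrt{n}\,E_j$ for an origin-centered ellipsoid $E_j$ with semi-axes $a_1^j\geq\cdots\geq a_n^j>0$ along orthonormal frames $e_k^j\to e_k$. The constraint $I_q(K_j)=|\mu|$ bounds $a_1^j$ from above, so degeneration can only occur through some ratio $a_{i_0}^j/a_{i_0+1}^j\to\infty$ for an index $1\leq i_0\leq n-1$. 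With $\xi_{i_0}=\Span\{e_1,\dots,e_{i_0}\}$, $h_{K_j}(v)$ is of order $a_{i_0}^j$ away from $\xi_{i_0}^\perp\cap\sn$ and of order $a_{i_0+1}^j$ near $\xi_{i_0}^\perp\cap\sn$. A careful asymptotic expansion of $\Phi_0(K_j)$ as $a_{i_0}^j/a_{i_0+1}^j\to\infty$, together with the scaling behavior of $I_q$ on two-scale geometries that separate $\xi_{i_0}$ from $\xi_{i_0}^\perp$ (this scaling is precisely what produces the exponent $i+\min\{i,q-1\}$ in \eqref{eq 1061}), yields an asymptotic for $\Phi_0(K_j)$ whose sign is controlled by the slack in the subspace mass inequality, and the strict inequality contradicts the minimizing property. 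This subspace-concentration analysis, adapted from the Böröczky--Lutwak--Yang--Zhang argument for the even log Minkowski problem but with the volume homogeneity exponent $n$ replaced by the chord homogeneity exponent $n+q-1$, is the main obstacle; the precise form $i+\min\{i,q-1\}$ arises from a non-trivial interplay between chord-integral scaling on product-type geometries and the dimensions of the relevant complementary subspaces, which is much more delicate than in the volume case $q=1$. Once compactness is established, Blaschke selection and the weak continuity of $F_q$ on Hausdorff-convergent sequences in $\ke$ deliver a minimizer $K_0\in\ke$ with $I_q(K_0)=|\mu|$.

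Finally, I would derive $G_q(K_0,\cdot)=\mu$ from the Euler--Lagrange condition. For any even $g\in C(\sn)$ and small $|t|$, consider the Wulff-shape family $K_t=[h_{K_0}+tg]\in \ke$. By Theorem \ref{theorem variational formula} applied to $I_q$ together with the standard Wulff-shape derivative identity $\frac{d}{dt}\big|_{t=0}\int_\sn\log h_{K_t}\,d\mu = \int_\sn g/h_{K_0}\,d\mu$, the stationarity of $\Phi_0$ at $K_0$ subject to the constraint $I_q(K_t)=|\mu|$, with Lagrange multiplier $\lambda$, yields
\begin{equation*}
	\int_\sn \frac{g(v)}{h_{K_0}(v)}\,d\mu(v) = \lambda\int_\sn g(v)\,dF_q(K_0,v)\qquad\text{for every even }g\in C(\sn).
\end{equation*}
Taking $g=h_{K_0}$, together with $\int_\sn h_{K_0}\,dF_q(K_0,\cdot)=(n+q-1)I_q(K_0)=(n+q-1)|\mu|$, pins $\lambda=1/(n+q-1)$; replacing $g$ by $h_{K_0}g$ and invoking the definitions $dF_{0,q}(K_0,\cdot)=h_{K_0}\,dF_q(K_0,\cdot)$ and $G_q=F_{0,q}/(n+q-1)$ converts this into the desired identity $\mu=G_q(K_0,\cdot)$, completing the proof.
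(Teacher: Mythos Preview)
This theorem is not proved in the present paper: it is quoted verbatim from \cite{XLYZ} and serves only as motivation for the non-symmetric results that follow. There is therefore no ``paper's own proof'' to compare against. That said, your variational outline---minimize the entropy $\Phi_0(K)=\int_\sn\log h_K\,d\mu$ over $\{K\in\ke:I_q(K)=|\mu|\}$, obtain compactness from the subspace mass inequality, and read off $G_q(K_0,\cdot)=\mu$ from the Euler--Lagrange condition via Theorem~\ref{theorem variational formula}---is the standard route and is entirely consistent with the machinery the paper develops in Section~\ref{sec general p=0} for the harder non-symmetric case (see Lemmas~\ref{lemma entropy bound}--\ref{lemma integer}).

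There is, however, a genuine error in your compactness step. You write that ``the constraint $I_q(K_j)=|\mu|$ bounds $a_1^j$ from above.'' This is false: a long thin origin-symmetric ellipsoid can have $a_1^j\to\infty$ while $I_q$ stays fixed, provided the remaining semi-axes shrink appropriately. The constraint alone gives no upper bound on the diameter. What one must show instead is that if $a_1^j\to\infty$ then $\Phi_0(K_j)\to+\infty$, contradicting the minimizing property (compare with any fixed ball to get a finite upper bound on the infimum). This is exactly the content of Lemmas~\ref{lemma entropy bound} and~\ref{lemma bound on chord integral} in the paper (specialized to the even case): one partitions $\sn$ according to the John-ellipsoid frame, uses the subspace mass inequality in the form~\eqref{eq 6141} to control the $\overline\mu$-mass of the ``bad'' spherical strips, performs summation by parts to convert mass bounds into a lower bound for $\Phi_0$ in terms of $\log r_{n,m}$ and $\log I_q(E_m)$, and then invokes the ellipsoid estimate for $I_q$ (Lemma~\ref{lemma bound on chord integral}) together with $I_q(E_m)\gtrsim I_q(K_j)=|\mu|$. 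Your phrase ``a careful asymptotic expansion'' gestures at this, but the logical flow you wrote down is inverted: the diameter bound is the \emph{conclusion} of the entropy argument, not its hypothesis. Once this is repaired, the rest of your argument (Blaschke selection, non-degeneracy from $I_q(K_0)=|\mu|>0$, and the Lagrange-multiplier computation) goes through as you indicate.
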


In this section, we show that the above theorem remains true without symmetric assumptions by employing an approximation scheme via solutions we obtained in Theorem \ref{thm discrete}.

Following the discussion at the beginning of the section, we only need to verify that the conditions in Lemma \ref{lemma 1051} are satisfied.

\begin{lemma}
\label{lemma 1066}
	Let $P_m$ be as given in \eqref{eq 1054}; that is,
	\begin{equation*}
		G_q(P_m,\cdot) = \frac{1}{n+q-1}F_{0,q}(P_m,\cdot)=\frac{1}{n+q-1}\overline{\mu}_m.
	\end{equation*}
	Then there exists $c_0>0$ such that $I_q(P_m)>c_0$ for every $m$.
\end{lemma}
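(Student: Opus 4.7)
The plan is to observe that this lower bound is in fact an equality that drops out of the total mass identity $|G_q(K,\cdot)|=I_q(K)$ recalled from \cite{XLYZ}, together with the specific construction of $\overline{\mu}_m$ in \eqref{eq 1059}.

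By \eqref{eq 1054} applied with $p=0$, one has $F_{0,q}(P_m,\cdot)=\overline{\mu}_m$. The definition $G_q(K,\cdot)=\frac{1}{n+q-1}F_{0,q}(K,\cdot)$ then gives
\[
G_q(P_m,\cdot)=\frac{1}{n+q-1}\,\overline{\mu}_m.
\]
Taking total variation on both sides and invoking $|G_q(P_m,\cdot)|=I_q(P_m)$ yields
\[
I_q(P_m)=\frac{|\overline{\mu}_m|}{n+q-1}=\frac{|\mu|}{n+q-1},
\]
where the second equality uses the normalization $|\overline{\mu}_m|=|\mu|$ built into \eqref{eq 1059}. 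Since $\mu$ is a nonzero finite Borel measure, $|\mu|>0$, so $I_q(P_m)$ is a strictly positive constant independent of $m$. Any $c_0\in\bigl(0,\tfrac{|\mu|}{n+q-1}\bigr)$ — for instance $c_0=\tfrac{|\mu|}{2(n+q-1)}$ — establishes the claim.

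There is no genuine obstacle in this step; the lemma is really a bookkeeping consequence of the total mass formula for $G_q$. I expect that the true difficulty in this subsection lies elsewhere, namely in producing the complementary hypothesis of Lemma \ref{lemma 1051}, the uniform \emph{upper} bound on $P_m$. That is where the subspace mass inequality \eqref{eq 1061} must come into play, through an argument controlling the potential concentration of $\overline{\mu}_m$ near proper subspaces as $m\to\infty$, and where the non-symmetric $p=0$ setting diverges sharply from the $p>0$ case handled by Lemma \ref{lemma uniform control of Lp}.
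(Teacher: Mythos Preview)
Your proof is correct and is essentially identical to the paper's: both compute $I_q(P_m)=|G_q(P_m,\cdot)|=\frac{1}{n+q-1}|\overline{\mu}_m|=\frac{1}{n+q-1}|\mu|$ directly from the total mass identity and the normalization in \eqref{eq 1059}. Your closing remarks about the real work lying in the uniform upper bound are also accurate.
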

\begin{proof}
	By definition of $G_q(K,\cdot)$ and $I_q(K)$, it follows that
	\begin{equation}
		I_q(P_m) = |G_q(P_m, \cdot)| =\frac{1}{n+q-1}|\overline{\mu}_m|= \frac{1}{n+q-1}|\mu|:=c_0>0.
	\end{equation}
\end{proof}

The rest of the section is devoted to showing that the $P_m$ in \eqref{eq 1054} is uniformly bounded when $\mu$ (not necessarily even) satisfies the subspace mass inequality \eqref{eq 1061}.

For simplicity, we will write
\begin{equation}
	\lambda_i = \frac{i+\min\{i, q-1\}}{n+q-1}.
\end{equation}

For each $\omega\subset \sn$ and $\eta>0$, we define
\begin{equation}
	\mathfrak{N}_{\eta}(\omega) = \{v\in \sn: |v-u|  < \eta, \text{ for some } u\in \omega\}.
\end{equation}

The next lemma shows that when $\mu$ satisfies the subspace mass inequality, then the sequence of approximating discrete measures $\overline{\mu}_m$ satisfies a slightly stronger subspace mass inequality for sufficiently large $m$.
\begin{lemma}
\label{lemma discrete subspace mass inequality}
	Let $\mu$ be a finite Borel measure on $\sn$ and $\overline \mu_m$ be constructed as in \eqref{eq 1059}. If $\mu$ satisfies the subspace mass inequality \eqref{eq 1061}, then there exist $\widetilde{\lambda}_i\in (0, \lambda_i)$, $N_0>0$, and $\eta_0\in (0,1)$ such that for all $m>N_0$,
	\begin{equation}
	\label{eq 6141}
		\frac{\overline{\mu}_m (\mathfrak{N}_{\eta_0}(\xi_i\cap \sn))}{|\mu|}< \widetilde{\lambda_i},
	\end{equation}
	for each $i$-dimensional subspace $\xi_i\subset \rn$ and $i=1,\dots, n-1$.
\end{lemma}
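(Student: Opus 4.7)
The plan is to combine compactness of the Grassmannian $G_{n,i}$ with the strict slack afforded by the hypothesis \eqref{eq 1061} to derive a uniform inequality for $\mu$ on small tubes around subspheres, and then transfer this to $\overline{\mu}_m$ using the $1/m$ diameter bound on the partition $U_{j,m}$. The main subtlety is uniformity in the subspace $\xi_i$, which I will handle via a reverse-Fatou argument on $G_{n,i}$.

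\textbf{Uniform slack for $\mu$.} I would first observe the following upper semi-continuity: if $\xi^k\to\xi^*$ in $G_{n,i}$, $\eta_k\downarrow 0$, and $v_k\in\mathfrak{N}_{\eta_k}(\xi^k\cap\sn)$ with $v_k\to v$, then choosing $u_k\in\xi^k\cap\sn$ with $|v_k-u_k|<\eta_k$ forces $u_k\to v$; since $u_k=P_{\xi^k}u_k\to P_{\xi^*}v$, we get $v\in\xi^*\cap\sn$, so $\limsup_k \mathfrak{N}_{\eta_k}(\xi^k\cap\sn)\subset\xi^*\cap\sn$. Reverse Fatou for the finite measure $\mu$ then gives $\limsup_k \mu(\mathfrak{N}_{\eta_k}(\xi^k\cap\sn))\leq \mu(\xi^*\cap\sn)$. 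Taking $\eta_k\equiv 0$ shows $\xi\mapsto \mu(\xi\cap\sn)$ is upper semi-continuous on the compact set $G_{n,i}$, so $M_i:=\sup_{\xi}\mu(\xi\cap\sn)/|\mu|$ is attained; hence $M_i<\lambda_i$ by hypothesis. Fix $\lambda_i^{\ast}\in(M_i,\lambda_i)$. If for every $\eta>0$ there existed $\xi^{(\eta)}\in G_{n,i}$ with $\mu(\mathfrak{N}_{2\eta}(\xi^{(\eta)}\cap\sn))/|\mu|\geq \lambda_i^{\ast}$, then taking $\eta=1/k$ and extracting a convergent subsequence $\xi^k\to\xi^*$ would yield, by the previous inequality, $\mu(\xi^*\cap\sn)/|\mu|\geq\lambda_i^{\ast}>M_i$, contradicting the definition of $M_i$. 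Thus such an $\eta_i>0$ exists. Take $\eta_0=\min_{i=1}^{n-1}\eta_i$.

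\textbf{Transfer to $\overline{\mu}_m$.} Fix any $i$-subspace $\xi$ and any $m$ with $1/m<\eta_0$. If $v_{j,m}\in\mathfrak{N}_{\eta_0}(\xi\cap\sn)$, then $\operatorname{diam}(U_{j,m})<1/m$ forces $U_{j,m}\subset\mathfrak{N}_{\eta_0+1/m}(\xi\cap\sn)\subset\mathfrak{N}_{2\eta_0}(\xi\cap\sn)$. Summing over such $j$ and using the disjointness of the $U_{j,m}$,
\[
\mu_m(\mathfrak{N}_{\eta_0}(\xi\cap\sn))\leq \mu(\mathfrak{N}_{2\eta_0}(\xi\cap\sn))+\frac{1}{\mathcal{N}_m}.
\]
Since $|\mu_m|=|\mu|+1/\mathcal{N}_m$ and $\overline{\mu}_m=(|\mu|/|\mu_m|)\mu_m$,
\[
\frac{\overline{\mu}_m(\mathfrak{N}_{\eta_0}(\xi\cap\sn))}{|\mu|}=\frac{\mu_m(\mathfrak{N}_{\eta_0}(\xi\cap\sn))}{|\mu_m|}\leq \lambda_i^{\ast}+\frac{1}{\mathcal{N}_m|\mu|}.
\]
Choosing $\widetilde\lambda_i\in(\lambda_i^{\ast},\lambda_i)$ and $N_0$ so large that $1/(\mathcal{N}_m|\mu|)<\widetilde\lambda_i-\lambda_i^{\ast}$ for all $m>N_0$ (note $\mathcal{N}_m\to\infty$) delivers \eqref{eq 6141}. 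The main obstacle is the uniformity in $\xi$ in the first step, which forces one to combine the compactness of $G_{n,i}$ with a reverse-Fatou argument; the rest is bookkeeping with the diameter bound on the partition and the trivial estimate $|\mu_m|-|\mu|=1/\mathcal{N}_m$.
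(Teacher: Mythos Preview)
Your argument is correct and takes a genuinely different route from the paper's. The paper proceeds by a single contradiction: assuming the conclusion fails for a fixed $i$, it extracts sequences $m_j\to\infty$, $\eta_j\to 0$, $\lambda_i^{(j)}\to\lambda_i$ and subspaces $\xi^{(j)}\to\xi$, then uses only the weak convergence $\overline{\mu}_{m_j}\rightharpoonup\mu$ (specifically, upper semicontinuity on the closed set $\overline{\mathfrak{N}_\eta(\xi\cap\sn)}$) followed by $\eta\to 0$ to contradict \eqref{eq 1061}. Your proof instead splits the work: first you extract a uniform $\mu$-slack $\mu(\mathfrak{N}_{2\eta_0}(\xi\cap\sn))/|\mu|<\lambda_i^{\ast}$ for all $\xi$ via compactness of $G_{n,i}$ and reverse Fatou, and then you transfer this to $\overline{\mu}_m$ by exploiting the explicit structure of the approximation (the $1/m$ diameter bound on the pieces $U_{j,m}$ and the formula $|\mu_m|=|\mu|+1/\mathcal{N}_m$) rather than abstract weak convergence. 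The paper's route is shorter and uses nothing about $\overline{\mu}_m$ beyond $\overline{\mu}_m\rightharpoonup\mu$; yours is more constructive and yields the quantitative thresholds $1/m<\eta_0$ and $1/(\mathcal{N}_m|\mu|)<\widetilde{\lambda}_i-\lambda_i^{\ast}$ directly. One cosmetic point: your phrase ``taking $\eta_k\equiv 0$'' is loose since $\mathfrak{N}_0(\cdot)=\varnothing$ by the strict inequality in the definition; what you mean (and what works) is to run the same projection/limsup argument with the sets $\xi^k\cap\sn$ themselves to obtain the upper semicontinuity of $\xi\mapsto\mu(\xi\cap\sn)$.
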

\begin{proof}
	Note that if we can prove the existence of $N_0$, $\eta_0$ for a fixed $i$, then it is simple to find $N_0$ and $\eta_0$ for all $i=1, \dots, n-1$---by taking the maximum of $N_0$ and the minimum of $\eta_0$.
	
	For the rest of the proof, let $i=1,\dots, n-1$ be fixed. We argue by contradiction. If the desired result is false, then there exist sequences $m_j$, $\eta_j$ and $\lambda_i^{(j)}$, and a sequence $\xi^{(j)}$ of $i$-dimensional subspaces such that $m_j\rightarrow \infty$, $\eta_j \rightarrow 0$, $\lambda_{i}^{(j)}\rightarrow \lambda_i$ and
	\begin{equation}
	\label{eq local 1002a}
		\frac{\overline{\mu}_{m_j} (\mathfrak{N}_{\eta_j}(\xi^{(j)}\cap \sn))}{|\mu|}\geq \lambda_i^{(j)}.
	\end{equation}
	Let $e_{1, j},\dots,  e_{i,j}$ be an orthonormal basis of $\xi^{(j)}$. By taking a subsequence, we may assume $e_{k,j}\rightarrow e_k$ for each $1\leq k\leq i$ and that $e_1, \dots, e_i$ are orthonormal. Let $\xi = \Span\{e_1, \dots, e_i\}$. Let $\eta>0$ be an arbitrarily fixed real number. Then, since $\eta_j\rightarrow 0$, we have for sufficiently large $j$,
	\begin{equation}
		\mathfrak{N}_{\eta_j}(\xi^{(j)}\cap \sn)\subset \mathfrak{N}_{\eta}(\xi\cap \sn).
	\end{equation}
	This and \eqref{eq local 1002a} imply
	\begin{equation}
		\frac{\overline{\mu}_{m_j} (\overline{\mathfrak{N}_{\eta}(\xi\cap \sn)})}{|\mu|}\geq \lambda_i^{(j)}.
	\end{equation}
	Now, since $\overline{\mu}_{m_j}$ converges weakly to $\mu$,{ $\overline{\mathfrak{N}_{\eta}(\xi\cap \sn)}$ is compact}, and $\lambda_i^{(j)}\rightarrow\lambda_i$, we have
	\begin{equation}
		\frac{\mu (\overline{\mathfrak{N}_{\eta}(\xi\cap \sn)})}{|\mu|}\geq \lambda_i.
	\end{equation}
	Letting $\eta\rightarrow 0$, we have
	\begin{equation}
		\frac{\mu (\xi\cap \sn)}{|\mu|}\geq \lambda_i,
	\end{equation}
	which contradicts \eqref{eq 1061}.
\end{proof}

For notational simplicity, we will write $\Phi_{\mu}(K,\xi)$ for $\Phi_{0, \mu}(K,\xi)$.

Let $e_1, \dots, e_n$ be an orthonormal basis in $\rn$. We define the following partition of the unit sphere. For each $\delta\in (0,\frac{1}{\sqrt{n}})$, define
\begin{equation}
\label{eq local 20}
	A_{i,\delta} = \{v\in \sn: |v\cdot e_i|\geq \delta, |v\cdot e_j|<\delta, \text{ for } j>i\},
\end{equation}
for each $i=1,\dots, n$. These sets are non-empty since $e_i\in A_{i,\delta}$. They are obviously disjoint. Furthermore, it can be seen that the union of $A_{i,\delta}$ covers $\sn$. Indeed, for any unit vector $v\in \sn$, by the choice of $\delta$, there has to be at least one $i$ such that $|v\cdot e_i|\geq \delta$. Let $i_0$ be the largest $i$ that makes $|v\cdot e_i|\geq \delta$. Then $v\in A_{i_0, \delta}$. We use this spherical partition to prove the following lower bound on $\Phi_{\overline{\mu}_m}(E_m, o)$ when $E_m$ is a sequence of centered ellipsoids.

\begin{lemma}
\label{lemma entropy bound}
	Suppose $1< q< n+1$. Let $\mu$ be a nonzero finite Borel measure on $\sn$ and $\overline{\mu}_m$ be as constructed in \eqref{eq 1059}. Let $E_m$ be a sequence of centered ellipsoids
	\begin{equation}
		E_m = \left\{x\in \rn: \frac{|x\cdot e_{1,m}|^2}{r_{1,m}^2}+ \dots + \frac{|x\cdot e_{n,m}|^2}{r_{n,m}^2}\leq 1\right\},
	\end{equation}
	where $e_{1,m},\dots, e_{n,m}$ is an orthonormal basis in $\rn$ and $0<r_{1,m}\leq \dots\leq r_{n,m}$. Assume further that $e_{1,m}, \dots, e_{n,m}$ converges to an orthonormal basis $e_1,\dots, e_n$ in $\rn$ and $r_{n,m}\geq 1$.
	
	If $\mu$ satisfies the subspace mass inequality \eqref{eq 1061}, then there exists $\delta_0, t_0\in(0,1)$ and $N_0>0$ such that for each $m>N_0$, we have
	\begin{equation}
	\label{eq local 61}
	\begin{aligned}
			\frac{1}{|\overline{\mu}_m|}\Phi_{\overline{\mu}_m}(E_m, o)\geq \log\left(\frac{\delta_0}{2}\right) + t_0 \log r_{n,m}+ (1-t_0)\Bigg[\sum_{i=1}^{\lfloor q\rfloor -1}\frac{2}{n+q-1} &\log r_{i,m} +\frac{q-\lfloor q\rfloor+1}{n+q-1}{\log r_{\lfloor q\rfloor, m}} \\
			+& \sum_{i=\lfloor q\rfloor+1}^{n} \frac{1}{n+q-1} \log r_{i,m}\Bigg].
	\end{aligned}
	\end{equation}
\end{lemma}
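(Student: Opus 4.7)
The plan is to combine the spherical partition $\{A_{i,\delta}\}_{i=1}^n$ defined in \eqref{eq local 20} (with respect to the limit basis $e_1,\dots,e_n$) with an Abel-summation argument, using Lemma~\ref{lemma discrete subspace mass inequality} to control the relevant partial masses.

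First, I observe that for any $v\in\sn$ the ellipsoid's support function satisfies $h_{E_m}(v)\geq r_{i,m}|v\cdot e_{i,m}|$ for every $i$. Fix $\delta_0\in(0,1/\sqrt{n})$. Once $m$ is large enough that $\|e_{i,m}-e_i\|<\delta_0/2$ for all $i$, the inequality $|v\cdot e_i|\geq\delta_0$ defining $A_{i,\delta_0}$ gives $|v\cdot e_{i,m}|\geq\delta_0/2$, so $h_{E_m}(v)\geq(\delta_0/2)r_{i,m}$ on $A_{i,\delta_0}$. Integrating the logarithm against $\overline\mu_m$ and splitting by the partition then yields
\begin{equation*}
\frac{1}{|\overline\mu_m|}\Phi_{\overline\mu_m}(E_m,o)\geq \log(\delta_0/2)+\sum_{i=1}^n a_{i,m}\log r_{i,m},
\end{equation*}
where $a_{i,m}=\overline\mu_m(A_{i,\delta_0})/|\overline\mu_m|\geq 0$ and $\sum_i a_{i,m}=1$.

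Next, I invoke Lemma~\ref{lemma discrete subspace mass inequality}. A direct check shows that for $\delta_0$ small enough the union $A_{1,\delta_0}\cup\cdots\cup A_{i,\delta_0}$---whose elements have coordinates on $e_{i+1},\dots,e_n$ bounded in absolute value by $\delta_0$---sits inside $\mathfrak{N}_{\eta_0}(\Span\{e_1,\dots,e_i\}\cap\sn)$, where $\eta_0$ is the constant supplied by Lemma~\ref{lemma discrete subspace mass inequality}. Combined with $|\overline\mu_m|=|\mu|$, this produces the partial-sum bound
\begin{equation*}
A_{i,m}:=a_{1,m}+\cdots+a_{i,m}<\widetilde\lambda_i,\qquad i=1,\dots,n-1,
\end{equation*}
for all sufficiently large $m$, with each $\widetilde\lambda_i$ strictly less than $\lambda_i=(i+\min\{i,q-1\})/(n+q-1)$.

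Finally, I rewrite the bracket in \eqref{eq local 61} compactly: a direct check from the definition of $\lambda_i$ shows it equals $\sum_{i=1}^n(\lambda_i-\lambda_{i-1})\log r_{i,m}$, with $\lambda_0=0$ and $\lambda_n=1$. Setting $b_i=\log r_{i,m}$ and applying Abel summation to both sides gives
\begin{equation*}
\sum_i a_{i,m}b_i-t_0 b_n-(1-t_0)\sum_i(\lambda_i-\lambda_{i-1})b_i=\sum_{i=1}^{n-1}\bigl[(1-t_0)\lambda_i-A_{i,m}\bigr](b_{i+1}-b_i).
\end{equation*}
Since $r_{i,m}$ is nondecreasing in $i$, each $b_{i+1}-b_i\geq 0$; choosing $t_0\in(0,1)$ small enough that $(1-t_0)\lambda_i\geq\widetilde\lambda_i$ for every $i$---which is possible precisely because each $\widetilde\lambda_i<\lambda_i$---then forces every summand on the right to be nonnegative. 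Combined with the lower bound from the second paragraph, this delivers \eqref{eq local 61}. The main obstacle I anticipate is the clean verification that $A_{1,\delta_0}\cup\cdots\cup A_{i,\delta_0}$ embeds into the $\eta_0$-tube about $\Span\{e_1,\dots,e_i\}\cap\sn$, together with coordinating the three smallness parameters $\delta_0$, $t_0$, and $1/N_0$ so that every hypothesis above holds simultaneously.
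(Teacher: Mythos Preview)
Your proposal is correct and follows essentially the same route as the paper: both use the partition $\{A_{i,\delta_0}\}$ with respect to the limit basis, the bound $h_{E_m}(v)\ge(\delta_0/2)r_{i,m}$ on $A_{i,\delta_0}$ coming from $e_{i,m}\to e_i$, Lemma~\ref{lemma discrete subspace mass inequality} together with the inclusion $\bigcup_{j\le i}A_{j,\delta_0}\subset\mathfrak{N}_{\eta_0}(\xi_i\cap\sn)$, and then Abel summation with the choice of $t_0$ so that $(1-t_0)\lambda_i>\widetilde\lambda_i$. The only cosmetic difference is that you identify the bracket in \eqref{eq local 61} directly as $\sum_i(\lambda_i-\lambda_{i-1})\log r_{i,m}$ and do one clean Abel summation, whereas the paper performs the summation by parts and then splits into three cases ($q\in(1,2)$, $q\in[2,n)$, $q\in[n,n+1)$) to arrive at the same expression; your packaging is tidier but the argument is identical.
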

Here we adopt the convention that a sum disappears if the upper index is strictly smaller than the lower index. 
\begin{proof}
	Let $A_{i,\delta}$ be constructed as in \eqref{eq local 20} with respect to $e_1, \dots, e_n$.
	
	Since $\mu$ satisfies the subspace mass inequality \eqref{eq 1061}, by Lemma \ref{lemma discrete subspace mass inequality}, there exists $N_0>0$, $\eta_0\in (0,1)$, and $\widetilde{\lambda}_i\in(0,\lambda_i)$ such that for all $m>N_0$, \eqref{eq 6141} holds for each $i$-dimensional proper subspace $\xi_n\subset \rn$. Let $t_0>0$ be sufficiently small so that
	\begin{equation*}
		(1-t_0)\lambda_i>\widetilde{\lambda}_i.
	\end{equation*}
	Hence, for all $m> N_0$, we have
	\begin{equation}
	\label{eq local 21}
		\frac{\overline{\mu}_m (\mathfrak{N}_{\eta_0}(\xi_i\cap \sn))}{|\mu|}< (1-t_0)\lambda_i,
	\end{equation}
	for each $i$-dimensional subspace $\xi_i\subset \rn$ and $i=1,\dots, n-1$. In particular, we let $\xi_i = \Span \{e_1, \dots, e_i\}$.
	
	Observe that for sufficiently small $\delta_0\in (0,1)$, we have
	\begin{equation}
		\bigcup_{j=1}^iA_{j,\delta_0}\subset \mathfrak{N}_{\eta_0}(\xi_i\cap \sn),
	\end{equation}
	and as a consequence of \eqref{eq local 21} and the fact that $A_{j,\delta_0}$ forms a partition of $\sn$, we have
	\begin{equation}
	\label{eq local 60}
		\frac{\sum_{j=1}^i\overline{\mu}_m (A_{j,\delta_0})}{|\overline{\mu}_m|}=\frac{\sum_{j=1}^i\overline{\mu}_m (A_{j,\delta_0})}{|\mu|}< (1-t_0)\lambda_i,
	\end{equation}
	for each $i=1, \dots, n-1$. Here, we also used the fact that $|\overline{\mu}_m|=|\mu|$.
		
	Since $e_{1,m}, \dots, e_{n,m}$ converges to $e_1, \dots, e_n$, there exists $N_1>N_0$ such that for each $m>N_1$,
	\begin{equation}
		|e_{i,m}-e_i|<\frac{\delta_0}{2}, \text{ for } i =1,\dots,n.
	\end{equation}
	Note that since $\pm r_{i,m}e_{i,m} \in E_{m}$, we have for each $v\in A_{i, \delta_0}$
	\begin{equation}
		h_{E_{m}} (v)\geq |v\cdot e_{i,m}|r_{i,m}\geq (|v\cdot e_i|-|v\cdot (e_{i,m}-e_i)|)r_{i,m}\geq \frac{\delta_0}{2}r_{i,m}.
	\end{equation}
	Hence, by the fact that $A_{i,\delta}$ forms a partition of $\sn$, we have
	\begin{equation}
	\label{eq local 24}
	\begin{aligned}
		\frac{1}{|\overline{\mu}_m|}\Phi_{\overline{\mu}_m}(E_m, o) &= \frac{1}{|\overline{\mu}_m|}\sum_{i=1}^n \int_{A_{i,\delta_0}} \log h_{E_m}(v)d\overline{\mu}_m(v)\\
		&\geq \frac{1}{|\overline{\mu}_m|}\sum_{i=1}^n \log\left(\frac{\delta_0}{2}r_{i,m}\right)\overline{\mu}_m(A_{i,\delta_0})\\
		&=\log\left(\frac{\delta_0}{2}\right) + \sum_{i=1}^n\log r_{i,m} \frac{\overline{\mu}_m(A_{i,\delta_0})}{|\overline{\mu}_m|}\\
		&= \log\left(\frac{\delta_0}{2}\right) + \sum_{i=1}^n\log r_{i,m} \cdot \gamma_i,
	\end{aligned}		
	\end{equation}
	where we set
	\begin{equation}
		\gamma_i = \frac{\overline{\mu}_m(A_{i,\delta_0})}{|\overline{\mu}_m|}.
	\end{equation}
	
	We further set $s_i = \gamma_1+\dots+\gamma_i$ for $i=1,\dots, n$ and $s_0=0$. Note that $s_n=1$. We have $\gamma_i = s_i-s_{i-1}$ for $i=1,\dots, n$. Thus,
	\begin{equation}
		\begin{aligned}
			\sum_{i=1}^n\log r_{i,m} \cdot \gamma_i&= \sum_{i=1}^n (s_i-s_{i-1})\log r_{i,m}\\
			&= \log r_{n,m} + \sum_{i=1}^{n-1}s_i(\log r_{i,m}-\log r_{i+1, m}),
		\end{aligned}
	\end{equation}
	where in the last equality, we performed summation by parts. Note that by definition of $s_i$, equation \eqref{eq local 60} simply states
	\begin{equation}
		s_i<(1-t_0)\lambda_i.
	\end{equation}
	This, together with the fact that $r_{i,m}\leq r_{i+1,m}$, implies
	\begin{equation}
	\label{eq local 25}
	\begin{aligned}
		\sum_{i=1}^n\log r_{i,m} \cdot \gamma_i&\geq \log r_{n,m} +\sum_{i=1}^{n-1}(1-t_0)\lambda_i (\log r_{i,m}-\log r_{i+1, m})\\
		&=t_0 \log r_{n,m} + (1-t_0) \left(\sum_{i=1}^{n-1}\lambda_i (\log r_{i,m}-\log r_{i+1, m}) + \log r_{n,m}\right).
	\end{aligned}
	\end{equation}
	
	At this point, we perform summation by parts again and use the definition of $\lambda_i$.
	
	We do it in three cases.
	
	Case 1: $q\in (1,2)$. In this case, we have $\lambda_i = \frac{i+q-1}{n+q-1}$. Thus,
		\begin{equation}
	\label{eq 6156a}
	\begin{aligned}
			&\sum_{i=1}^{n-1}\lambda_i (\log r_{i,m}-\log r_{i+1, m}) + \log r_{n,m}\\
			 = &\lambda_1 \log r_{1,m} + \sum_{i=2}^{n-1}(\lambda_i-\lambda_{i-1})\log r_{i,m} + (1-\lambda_{n-1})\log r_{n,m}\\
			 =   & \frac{q}{n+q-1}\log r_{1,m}  + \sum_{i=2}^{n-1} \frac{1}{n+q-1} \log r_{i,m} + \left(1-\frac{n+q-2}{n+q-1}\right)\log r_{n,m}\\
			 = & \frac{q}{n+q-1}\log r_{1,m}  + \sum_{i=2}^{n} \frac{1}{n+q-1} \log r_{i,m}\\
	\end{aligned}
	\end{equation}

	Case 2: $q\in [2,n)$. Note that if $n=2$, there is no need to consider this case. Hence, for here, we assume $n\geq 3$. We have
	
	\begin{equation}
	\label{eq 6156}
	\begin{aligned}
			&\sum_{i=1}^{n-1}\lambda_i (\log r_{i,m}-\log r_{i+1, m}) + \log r_{n,m}\\
			 = &\lambda_1 \log r_{1,m} + \sum_{i=2}^{n-1}(\lambda_i-\lambda_{i-1})\log r_{i,m} + (1-\lambda_{n-1})\log r_{n,m}\\
			 =   & \frac{2}{n+q-1}\log r_{1,m}  + \sum_{i=2}^{\lfloor q\rfloor -1}\frac{2}{n+q-1} \log r_{i,m} +\frac{q-\lfloor q\rfloor+1}{n+q-1}{\log r_{\lfloor q\rfloor, m}} \\
			 &\phantom{ewqeqwewq}+ \sum_{i=\lfloor q\rfloor+1}^{n} \frac{1}{n+q-1} \log r_{i,m} + \left(1-\lambda_{n-1}-\frac{1}{n+q-1}\right)\log r_{n,m}\\
			 = &  \sum_{i=1}^{\lfloor q\rfloor -1}\frac{2}{n+q-1} \log r_{i,m} +\frac{q-\lfloor q\rfloor+1}{n+q-1}{\log r_{\lfloor q\rfloor, m}} + \sum_{i=\lfloor q\rfloor+1}^{n} \frac{1}{n+q-1} \log r_{i,m},
	\end{aligned}
	\end{equation}
	where in the last equality, we use the fact that $1-\lambda_{n-1}-\frac{1}{n+q-1}=1-\frac{n+q-2}{n+q-1}-\frac{1}{n+q-1}= 0$.
	
	Case 3: $q\in [n,n+1)$. In this case, we have $\lfloor q\rfloor=n$, $\lambda_i = \frac{2i}{n+q-1}$ for $i=1,\dots, n-1$, and
	\begin{equation}
	\label{eq 6157}
		\begin{aligned}
			&\sum_{i=1}^{n-1}\lambda_i (\log r_{i,m}-\log r_{i+1, m}) + \log r_{n,m}\\
			 = &\lambda_1 \log r_{1,m} + \sum_{i=2}^{n-1}(\lambda_i-\lambda_{i-1})\log r_{i,m} + (1-\lambda_{n-1})\log r_{n,m}\\
			 =&   \sum_{i=1}^{n-1}\frac{2}{n+q-1} \log r_{i,m}+(1-\lambda_{n-1})\log r_{n,m}\\
			 =& \sum_{i=1}^{n-1}\frac{2}{n+q-1} \log r_{i,m}+\frac{q-n+1}{q+n-1}\log r_{n,m}\\
			 =& \sum_{i=1}^{n-1}\frac{2}{n+q-1} \log r_{i,m}+\frac{q-\lfloor q\rfloor+1}{q+n-1}\log r_{n,m}.
		\end{aligned}
	\end{equation}
	
	Note that \eqref{eq 6156a},  \eqref{eq 6156} and \eqref{eq 6157} can be written in a uniform way by adopting the convention that a sum disappears if the lower index is strictly bigger than the upper index:
	\begin{equation}
	\label{eq local 26}
	\begin{aligned}
			&\sum_{i=1}^{n-1}\lambda_i (\log r_{i,m}-\log r_{i+1, m}) + \log r_{n,m}\\
			 			 =&  \sum_{i=1}^{\lfloor q\rfloor -1}\frac{2}{n+q-1} \log r_{i,m} + \frac{q-\lfloor q\rfloor+1}{n+q-1}{\log r_{\lfloor q\rfloor, m}} + \sum_{i=\lfloor q\rfloor+1}^{n} \frac{1}{n+q-1} \log r_{i,m}.
	\end{aligned}
	\end{equation}

	Combining \eqref{eq local 24}, \eqref{eq local 25} and \eqref{eq local 26} provides the desired result.
\end{proof}

The following lemma is an estimate on the chord integral of ellipsoids obtained in Xi-LYZ \cite{XLYZ}.
\begin{lemma}[\cite{XLYZ}]
\label{lemma bound on chord integral}
	Suppose $q\in (1, n+1)$ is not an integer. If $E$ is the ellipsoid in $\rn$ given by
	\begin{equation}
		E =\left\{ x\in \rn:\frac{|x\cdot e_1|^2}{r_1^2}+ \dots + \frac{|x\cdot e_n|^2}{r_n^2}\leq 1\right\},
	\end{equation}
	where $e_1,\dots, e_n$ is an orthonormal basis in $\rn$ and $0<r_1\leq \dots\leq r_n$. Then
	\begin{equation}
		\log I_q(E)\leq \Bigg[\sum_{i=1}^{\lfloor q\rfloor -1}2 \log r_{i}+ (q-\lfloor q\rfloor+1){\log r_{\lfloor q\rfloor}} + \sum_{i=\lfloor q\rfloor+1}^{n} \log r_{i}\Bigg]+c(q,n)
	\end{equation}
	where $c(q,n)$ is a constant (not necessarily positive) that only depends on $q$ and $n$.
\end{lemma}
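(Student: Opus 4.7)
The plan is to convert $I_q(E)$ into a Gaussian integral over $\rn$ via an affine change of variables, and then estimate that integral by a two-stage reduction indexed by $k := \lfloor q \rfloor$. Writing $A = \mathrm{diag}(r_1,\dots,r_n)$ so that $E = A(B)$, the first step is to derive the affine formula
\[
	I_q(E) = c_0(n,q)\,\Bigl(\prod_i r_i\Bigr)^{\!2}\,\int_{\sn} |Au|^{-(n+1-q)}\,du.
\]
This comes from the change of variables $\ell \mapsto A\ell$ on $\mathscr{L}^n$, using $|E \cap A\ell| = |Au(\ell)|\cdot|B\cap\ell|$ and the standard Jacobian $|\det A|^2 |Au(\ell)|^{-(n+1)}$, then integrating out the rotation-invariant inner integral $\int_{u^\perp}|B \cap \ell_{u,z}|^q\,dz$ (a dimensional constant). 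Applying the Gaussian identity $\int_{\sn}|Au|^{-\beta}\,du = \frac{2}{\Gamma((n-\beta)/2)}\int_{\rn}|Ax|^{-\beta}e^{-|x|^2}\,dx$ (valid for $0<\beta<n$) with $\beta = n+1-q$ and the substitution $y = Ax$ then yields
\[
	I_q(E) = c_1(n,q)\,\Bigl(\prod_i r_i\Bigr)\,\int_{\rn}|y|^{-(n+1-q)}\,e^{-\sum_i y_i^2/r_i^2}\,dy.
\]

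Next, I bound $|y|^{-(n+1-q)} \leq (y_k^2 + y_{k+1}^2 + \cdots + y_n^2)^{-(n+1-q)/2}$, using $|y|^2 \geq \sum_{i\geq k}y_i^2$ and $n+1-q>0$. The choice of the last $n-k+1$ coordinates is forced by integrability: the radial integral in $\mathbb{R}^{|S|}$ converges near $0$ only when $|S| > n+1-q$, giving $|S| \geq n-k+1$. Splitting off the Gaussian factors in $y_1,\dots,y_{k-1}$ (contributing $\prod_{i<k}r_i\sqrt\pi$) and passing to polar coordinates in $\mathbb{R}^{n-k+1}$ reduces the remaining integral, up to a Gamma factor, to the spherical integral $\int_{S^{n-k}}|\wt A^{-1}\omega|^{-(q-k)}\,d\omega$, where $\wt A = \mathrm{diag}(r_k,\dots,r_n)$. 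The new exponent $q-k \in (0,1)$ is now subcritical.

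A second application of the Gaussian identity in dimension $n-k+1$ with $\beta = q-k$, followed by $y = \wt A^{-1}x$, rewrites this spherical integral as $\frac{2\prod_{i\geq k}r_i}{\Gamma((n+1-q)/2)}\int_{\mathbb{R}^{n-k+1}}|y|^{-(q-k)}\,e^{-\sum_{i\geq k}r_i^2 y_i^2}\,dy$. Since $q-k<1$, the single-coordinate bound $|y|^{-(q-k)} \leq |y_k|^{-(q-k)}$ produces an integrable singularity against $e^{-r_k^2 y_k^2}$; evaluating via $s = r_k y_k$ gives $r_k^{q-k-1}$ times a constant, while the remaining Gaussians in $y_{k+1},\dots,y_n$ contribute $\prod_{i>k}\sqrt\pi/r_i$. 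Consolidating, $\int_{S^{n-k}}|\wt A^{-1}\omega|^{-(q-k)}\,d\omega \leq C\,r_k^{q-k}$, and threading this back through yields
\[
	I_q(E) \leq C\,\prod_{i<k}r_i^{\,2}\cdot r_k^{\,q-k+1}\,\prod_{i>k}r_i,
\]
which upon taking logarithms is exactly the claim with $k = \lfloor q \rfloor$.

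The main obstacles I anticipate are: (i) the derivation of the affine formula, which rests on the explicit (standard but somewhat technical) Jacobian of the $\mathscr{L}^n$-action under linear maps; (ii) the non-integrality hypothesis on $q$, entering precisely through the requirement that $q-k$ lie strictly inside $(0,1)$ so the final single-coordinate singularity is integrable against the Gaussian; and (iii) the two sharp coordinate choices---the last $n-k+1$ indices and the single index $k$. Picking any $j>k$ in the final reduction gives $r_j^{q-k}$ in place of $r_k^{q-k}$, which is strictly weaker since $q-k>0$ and $r_j\geq r_k$, and the resulting bound on $I_q(E)$ would miss the sharp $r_k^{q-k+1}$ factor the claim requires.
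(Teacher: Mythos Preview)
The paper does not prove this lemma; it is quoted verbatim from Xi--LYZ \cite{XLYZ} with no argument given, so there is no in-paper proof to compare against. That said, your proof is correct and self-contained.

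A few remarks on the argument. Your affine formula for $I_q(E)$ is right; an alternative route that avoids the $\mathscr{L}^n$-Jacobian is to start from the Riesz-potential representation $I_q(E)=\frac{q(q-1)}{n\omega_n}\iint_{E\times E}|x-y|^{q-n-1}\,dx\,dy$, substitute $x=Ax'$, $y=Ay'$, and then integrate radially in $x'-y'$; this gives the same expression $c_0(\prod r_i)^2\int_{\sn}|Au|^{q-n-1}\,du$ with less bookkeeping. Your two-stage Gaussian reduction is clean, and you identify exactly where non-integrality enters: the first coordinate drop needs $|S|>n+1-q$ for convergence at the origin, the second single-coordinate drop needs the new exponent to be in $(0,1)$ for the one-dimensional Gaussian integral to converge, and together these force $|S|=n-\lfloor q\rfloor+1$. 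Your discussion of why the index $k$ (rather than any $j>k$) is the correct final choice is also on point; you might add the parallel remark that taking $S=\{k,\dots,n\}$ rather than any other $(n-k+1)$-element set minimizes the leftover Gaussian product $\prod_{i\in S^c}r_i$, which is why that choice is optimal and not merely convenient.
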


For the rest of the section, we will use symbols like $c(a,b)$ to denote constants that depend only on $a$ and $b$.

We now prove that $P_m$ is uniformly bounded when $q\in (1,n+1)$ is not an integer.
\begin{lemma}
\label{lemma noninteger}
	Suppose $q\in (1,n+1)$ is not an integer. Let $\mu$ be a finite Borel measure on $\sn$ and $\overline\mu_m$ be as constructed in \eqref{eq 1059}. Let $P_m$ be as given in \eqref{eq 1054}. If $\mu$ satisfies the subspace mass inequality \eqref{eq 1061}, then $P_m$ is uniformly bounded.
\end{lemma}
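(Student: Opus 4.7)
The plan is to argue by contradiction: assuming the $P_m$ are not uniformly bounded, I would derive that $\Phi_{\overline{\mu}_m}(P_m, c_m)$ blows up along a subsequence (where $c_m$ is the John center of $P_m$), while also establishing that this quantity is bounded above. By homogeneity of $F_{0,q}$, we may normalize so that $|\mu|=1$.

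First, I would obtain a uniform upper bound on $\Phi_{\overline{\mu}_m}(P_m, \xi)$ valid for every $\xi \in \Int P_m$. Recalling from \eqref{eq 10410} that $P_m = \lambda_0 \, [z^m,\Omega_m]$ for the fixed constant $\lambda_0 = (n+q-1)^{-1/(n+q-1)}$, and that $\xi_{0,\overline{\mu}_m}(z^m)=o$ is the unique maximizer of $\Phi_{0,\overline{\mu}_m}([z^m,\Omega_m],\cdot)$, the rescaling identity
\[
\Phi_{\overline{\mu}_m}(P_m,\xi) = |\overline{\mu}_m|\log\lambda_0 + \Phi_{0,\overline{\mu}_m}([z^m,\Omega_m], \xi/\lambda_0),
\]
combined with the maximizer inequality and Remark \ref{remark 1061} (which uniformly bounds $\Phi_{0,\overline{\mu}_m}(z^m,o)$ from above), yields $\Phi_{\overline{\mu}_m}(P_m,\xi)\leq C_1$ uniformly in $m$ and in $\xi$.

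Next I would invoke John's lemma: write $E_m = c_m + \tilde{E}_m$ for the John ellipsoid of $P_m$, where $\tilde{E}_m$ is centered at the origin with semi-axes $0 < r_{1,m}\leq\cdots\leq r_{n,m}$ along an orthonormal frame $e_{1,m},\dots,e_{n,m}$, so that $E_m\subset P_m\subset c_m+n\tilde{E}_m$. By compactness of the orthogonal group we may pass to a subsequence along which $e_{i,m}\to e_i$, and since the diameters of $P_m$ and $E_m$ are comparable up to a factor of $n$, unboundedness of $P_m$ forces $r_{n,m}\to\infty$; in particular $r_{n,m}\geq 1$ for large $m$. The inclusion $E_m\subset P_m$ gives $h_{P_m-c_m}\geq h_{\tilde{E}_m}$ pointwise, hence $\Phi_{\overline{\mu}_m}(P_m,c_m)\geq \Phi_{\overline{\mu}_m}(\tilde{E}_m,o)$, and Lemma \ref{lemma entropy bound} applies to produce a lower bound of the form
\[
\Phi_{\overline{\mu}_m}(\tilde{E}_m,o) \geq |\overline{\mu}_m|\left[\log(\delta_0/2) + t_0 \log r_{n,m} + \tfrac{1-t_0}{n+q-1}\, S(r_{1,m},\dots,r_{n,m})\right],
\]
where $S$ is exactly the weighted log-sum appearing in Lemma \ref{lemma bound on chord integral} (whose weights are $n+q-1$ times those in the bracketed sum of Lemma \ref{lemma entropy bound}).

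Finally I would bound $S$ from below via the chord integral. From $P_m\subset c_m+n\tilde{E}_m$, translation invariance and $(n+q-1)$-homogeneity of $I_q$ yield $I_q(\tilde{E}_m)\geq n^{-(n+q-1)}I_q(P_m)$, and by Lemma \ref{lemma 1066} the right-hand side is a fixed positive constant, so $I_q(\tilde{E}_m)\geq c_*>0$ uniformly in $m$. Lemma \ref{lemma bound on chord integral} (which is where the non-integer hypothesis on $q$ enters) then gives $S(r_{1,m},\dots,r_{n,m})\geq \log c_* - c(q,n)$. Substituting into the previous display, the lower bound on $\Phi_{\overline{\mu}_m}(P_m,c_m)$ dominates $t_0|\mu|\log r_{n,m}$ plus uniformly bounded terms and so tends to $+\infty$, contradicting the upper bound $C_1$. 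I expect the main hurdle to be the precise algebraic matching between the weighted log-sums in Lemmas \ref{lemma entropy bound} and \ref{lemma bound on chord integral}: the subspace mass inequality \eqref{eq 1061} is chosen exactly so that the coefficients in the entropy lower bound are strictly larger than those in the $I_q$ upper bound after redistribution by summation by parts, which is what makes the above argument close.
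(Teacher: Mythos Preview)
Your proposal is correct and follows essentially the same route as the paper: contradiction via the John ellipsoid, the entropy lower bound of Lemma~\ref{lemma entropy bound}, the chord-integral upper bound of Lemma~\ref{lemma bound on chord integral} (this is indeed where the non-integer hypothesis enters), and the uniform upper bound on $\Phi_{\overline{\mu}_m}(P_m,\cdot)$ coming from Lemma~\ref{lemma bound phi}/Remark~\ref{remark 1061}. The only cosmetic difference is that you package the upper bound as ``$\Phi_{\overline{\mu}_m}(P_m,\xi)\le C_1$ for every $\xi$'' via the rescaling identity, whereas the paper phrases it as $\Phi_{\overline{\mu}_m}(P_m,o_m)\le \Phi_{\overline{\mu}_m}(P_m,o)$ together with Lemma~\ref{lemma bound phi}; your homogeneity exponent $n^{-(n+q-1)}$ in the $I_q$ comparison is in fact the correct one.
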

\begin{proof}
Because of homogeneity,  we may assume $\mu$ is a probability measure.

We argue by contradiction and assume that $P_m$ is not uniformly bounded.
	
	Let $E_m$ be the John ellipsoid of $P_m$; that is
	\begin{equation}
	\label{eq local 40}
		E_m\subset P_m\subset n(E_m-o_m)+o_m,
	\end{equation}
	where the ellipsoid $E_m$ centered at $o_m\in \text{int}\, P_m$ is given by
	\begin{equation}
		E_m = \left\{x\in \rn: \frac{|(x-o_m)\cdot e_{1,m}|^2}{r_{1,m}^2}+\dots + \frac{|(x-o_m)\cdot e_{n,m}|^2}{r_{n,m}^2}\leq 1\right\},
	\end{equation}
	for some orthonormal basis $e_{1,m},\dots, e_{n,m}$ in $\rn$ and $0<r_{1,m}\leq \dots\leq r_{n,m}$. Since $P_m$ is not uniformly bounded, by taking a subsequence, we may assume $r_{n,m}\rightarrow \infty$ and $r_{n,m}\geq 1$. By the compactness of $\sn$, we may take a subsequence and assume that $e_{1,m},\dots, e_{n,m}$ converges to $e_1, \dots, e_n$---an orthonormal basis in $\rn$. By the definition of $\Phi_{\overline\mu_m}$, \eqref{eq local 40} and Lemma \ref{lemma entropy bound}, there exists $\delta_0, t_0>0$ and $N_0>0$ such that for each $m> N_0$, we have
	\begin{equation}
	\label{eq local 41}
	\begin{aligned}
		\frac{1}{|\overline\mu_m|}\Phi_{\overline\mu_m}(P_m, o_m)&\geq \frac{1}{|\overline\mu_m|}\Phi_{\mu_m}(E_m, o_m)\\
		&= \frac{1}{|\overline\mu_m|}\Phi_{\mu_m}(E_m-o_m,o)\\
		&\geq \log\left(\frac{\delta_0}{2}\right) + t_0 \log r_{n,m}+ (1-t_0)\Bigg[\sum_{i=1}^{\lfloor q\rfloor -1}\frac{2}{n+q-1} \log r_{i,m} \\&\phantom{wew}+ \frac{q-\lfloor q\rfloor+1}{n+q-1}{\log r_{\lfloor q\rfloor, m}}
			+ \sum_{i=\lfloor q\rfloor+1}^{n} \frac{1}{n+q-1} \log r_{i,m}\Bigg]\\
			& \geq \log\left(\frac{\delta_0}{2}\right) + t_0 \log r_{n,m} + \frac{1-t_0}{n+q-1}\log I_q(E_m)+c(t_0, n,q),
	\end{aligned}
	\end{equation}
	where $c(t_0, n,q)$ is not necessarily positive. Here, the last inequality follows from Lemma \ref{lemma bound on chord integral}. By homogeneity and translation invariance of $I_q$, \eqref{eq local 40}, and the choice of $P_m$, we have
	\begin{equation}
	\label{eq local 80}
		I_q(E_m) = I_q(E_m-o_m)=n^{-\frac{1}{n+q-1}}I_q(n(E_m-o_m)+o_m)\geq n^{-\frac{1}{n+q-1}}I_q(P_m)= n^{-\frac{1}{n+q-1}}\frac{1}{n+q-1}|\overline\mu_m|.
	\end{equation}
	
	Let $y^m \in \mathbb{R}^{\mathcal{N}_m}_+$ be such that $y^m_i = h_{P_m}(v_{i,m})$. By \eqref{eq 791} and \eqref{eq 10410}, we have that $y^m$ is a constant multiple of $z^m$, where $z^m$ is the minimizer to \eqref{eq 6145} with $\xi_{0,\overline{\mu}_m}(z^m)=o$. This, when combined with the homogeneity of $\xi_{0,\overline{\mu}_m}$, implies that $\xi_{0,\overline{\mu}_m}(y^m)=o$. This, \eqref{eq local 80}, that $|\overline\mu_m|=|\mu|$, \eqref{eq local 41}, and that $r_{n,m}\rightarrow \infty$ imply
	\begin{equation}
	\label{eq local 50}
		\Phi_{\overline\mu_m}(P_m, o)\geq\Phi_{\overline\mu_m}(P_m, o_m)\rightarrow \infty, \text{ as } m\rightarrow \infty.
	\end{equation}
	This is a contradiction to Lemma \ref{lemma bound phi}.
\end{proof}

The uniform upper bound for $P_m$ when $q=2,\dots, n$ is an integer is slightly more complicated. We require the following lemma obtained in \cite{XLYZ}, which follows from a simple argument using Jensen's inequality.

\begin{lemma}[\cite{XLYZ}]
\label{lemma monotonicity in q}
	If $K\in \mathcal{K}_o^n$ and $1\leq r<s$, then
	\begin{equation}
		I_r(K)\leq c(r,s) V(K)^{1-\frac{r-1}{s-1}}I_s(K)^\frac{r-1}{s-1},
	\end{equation}
	where $c(r,s)>0$ only depends on $r$ and $s$.
\end{lemma}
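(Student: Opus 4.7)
The plan is to recognize the inequality as a direct consequence of the log-convexity of the map $q\mapsto \log I_q(K)$, which is a standard feature of moment-type integrals and is exactly what Jensen's (equivalently, H\"older's) inequality yields for integrals of the form $\int f^q\, d\mu$. Set $\theta = \frac{r-1}{s-1}\in [0,1)$, which is well-defined since $1\leq r<s$. Then $r=(1-\theta)\cdot 1+\theta\cdot s$, so for every line $\ell\in\mathscr{L}^n$ we may split the integrand as
\begin{equation*}
    |K\cap\ell|^r \;=\; |K\cap\ell|^{1-\theta}\cdot\bigl(|K\cap\ell|^{s}\bigr)^{\theta}.
\end{equation*}

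The main step is then a single application of H\"older's inequality on the affine Grassmannian $\mathscr{L}^n$ with conjugate exponents $p=\tfrac{1}{1-\theta}$ and $p'=\tfrac{1}{\theta}$:
\begin{equation*}
    I_r(K) \;=\; \int_{\mathscr{L}^n}|K\cap\ell|^{1-\theta}\bigl(|K\cap\ell|^{s}\bigr)^{\theta}\,d\ell
    \;\leq\; \left(\int_{\mathscr{L}^n}|K\cap\ell|\,d\ell\right)^{1-\theta}\left(\int_{\mathscr{L}^n}|K\cap\ell|^{s}\,d\ell\right)^{\theta}.
\end{equation*}
Using the identity $I_1(K)=V(K)$ from the preliminaries, the first factor on the right becomes $V(K)^{1-\theta}$, while the second is exactly $I_s(K)^{\theta}$. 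This gives the stated inequality, in fact with the explicit constant $c(r,s)=1$.

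The only case requiring separate attention is the endpoint $r=1$ (where $\theta=0$), but there the claim is trivial since it reduces to $V(K)\leq c(1,s)V(K)$. There is no substantive obstacle in this argument; the whole content is the log-convexity of $q\mapsto \log I_q(K)$ for nonnegative integrands on $\mathscr{L}^n$, and the constant $c(r,s)$ allowed in the statement may be taken to be $1$ via this direct H\"older/Jensen approach.
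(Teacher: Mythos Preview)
Your argument is correct and matches the paper's approach: the paper does not spell out a proof but simply notes that the lemma (quoted from \cite{XLYZ}) ``follows from a simple argument using Jensen's inequality,'' which is precisely the log-convexity of $q\mapsto \log I_q(K)$ that you establish via H\"older; the two formulations are equivalent here, and your observation that one may take $c(r,s)=1$ is a harmless sharpening.
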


The following lemma provides the desired uniform upper bound for $P_m$ when $q$ is an integer.
\begin{lemma}
\label{lemma integer}
	Suppose $q\in\{2,\dots, n\}$. Let $\mu$ be a finite Borel measure on $\sn$ and $\overline\mu_m$ be as constructed in \eqref{eq 1059}. Let $P_m$ be as given in \eqref{eq 1054}. If $\mu$ satisfies the subspace mass inequality \eqref{eq 1061}, then $P_m$ is uniformly bounded.

\end{lemma}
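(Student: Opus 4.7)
The plan is to mirror the contradiction argument of Lemma \ref{lemma noninteger}. The only step there that genuinely used $q\notin\Z$ is the ellipsoidal estimate of $\log I_q$ from Lemma \ref{lemma bound on chord integral}, so my task is to produce an adequate surrogate when $q$ is an integer. I assume for contradiction that $P_m$ is not uniformly bounded and pass to a subsequence for which the John ellipsoid $E_m$ of $P_m$ satisfies $E_m\subset P_m\subset n(E_m-o_m)+o_m$, its largest semi-axis $r_{n,m}\to\infty$ with $r_{n,m}\ge 1$, and its principal orthonormal frame $e_{1,m},\dots,e_{n,m}$ converges to a limit orthonormal frame $e_1,\dots,e_n$. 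Applying Lemma \ref{lemma entropy bound} to the centered ellipsoid $E_m-o_m$, and using $\Phi_{\overline{\mu}_m}(E_m-o_m,o)=\Phi_{\overline{\mu}_m}(E_m,o_m)$ together with $\lfloor q\rfloor=q$, $q-\lfloor q\rfloor+1=1$, yields constants $\delta_0,t_0\in(0,1)$ and $N_0>0$ such that for all $m>N_0$,
\begin{equation*}
\Phi_{\overline{\mu}_m}(E_m,o_m)\ge c_1+c_2 t_0\log r_{n,m}+\frac{c_2(1-t_0)}{n+q-1}\Bigl[2\sum_{i<q}\log r_{i,m}+\log r_{q,m}+\sum_{i>q}\log r_{i,m}\Bigr],
\end{equation*}
where $c_2:=|\mu|>0$.

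For the surrogate I would fix a small non-integer $s=q+\epsilon\in(q,n+1)$ and apply Lemma \ref{lemma bound on chord integral} to $E_m-o_m$ to obtain $\log I_s(E_m-o_m)\le 2\sum_{i<q}\log r_{i,m}+(1+\epsilon)\log r_{q,m}+\sum_{i>q}\log r_{i,m}+c_3$; rearranging, the bracket above is at least $\log I_s(E_m-o_m)-\epsilon\log r_{q,m}-c_3$. To control $\log I_s$ from below I would use Lemma \ref{lemma monotonicity in q} with $r=q$, $s=q+\epsilon$, applied to $E_m-o_m\in\ko$; using $V(E_m)=\omega_n\prod_i r_{i,m}$ and the uniform lower bound $I_q(E_m-o_m)\ge n^{-(n+q-1)}I_q(P_m)>0$ (supplied by Lemma \ref{lemma 1066} together with translation invariance of $I_q$), this rearranges into
\begin{equation*}
\log I_s(E_m-o_m)\ge C-\frac{\epsilon}{q-1}\sum_{i=1}^n\log r_{i,m}.
\end{equation*}

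Combining the three ingredients and using $\log r_{i,m}\le\log r_{n,m}$ (valid because $r_{i,m}\le r_{n,m}$ and $r_{n,m}\ge 1$) to collapse each residual $-\frac{\epsilon}{q-1}\log r_{i,m}$ and $-\epsilon\log r_{q,m}$ into a multiple of $-\log r_{n,m}$, the chain reduces to
\begin{equation*}
\Phi_{\overline{\mu}_m}(E_m,o_m)\ge c_4+c_2\Bigl[t_0-\frac{(1-t_0)\epsilon}{q-1}\Bigr]\log r_{n,m}.
\end{equation*}
Choosing $\epsilon<t_0(q-1)/(1-t_0)$ keeps the bracket strictly positive, so the right-hand side diverges to $+\infty$. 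But $E_m\subset P_m$ gives $\Phi_{\overline{\mu}_m}(E_m,o_m)\le\Phi_{\overline{\mu}_m}(P_m,o_m)$, and $o$ maximizes $\Phi_{\overline{\mu}_m}(P_m,\cdot)$ on $\Int P_m$ (by construction of $z^m$ and homogeneity of $\xi_{0,\overline{\mu}_m}$), so $\Phi_{\overline{\mu}_m}(P_m,o)\to\infty$, contradicting the uniform upper bound of Lemma \ref{lemma bound phi}. The main obstacle is precisely this calibration: the interpolation step inevitably leaks an $\epsilon$-fraction of $\log r_{n,m}$ back into the negative side, and one needs the subspace mass inequality to supply a definite $t_0>0$ so that this leakage is strictly dominated by the $t_0\log r_{n,m}$ gain.
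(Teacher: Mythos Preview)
Your argument is correct and follows essentially the same strategy as the paper: perturb to a nearby non-integer parameter, invoke Lemma~\ref{lemma bound on chord integral} there, and use Lemma~\ref{lemma monotonicity in q} to relate $I_{q+\epsilon}$ back to $I_q$, with the leaked volume term absorbed by the $t_0\log r_{n,m}$ gain. The only organizational difference is where the perturbation is carried out. The paper first re-derives Lemma~\ref{lemma entropy bound} with $q$ replaced by $q'=q+\epsilon$ (after checking that $(1-t_0)\lambda_i'>\widetilde{\lambda}_i$ persists for $q'$ close to $q$), so that the entropy bracket exactly matches the chord-integral bracket for $q'$; you instead keep the $q$-version of Lemma~\ref{lemma entropy bound} and bridge to the $s$-chord-integral bound by subtracting the single discrepancy term $\epsilon\log r_{q,m}$. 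Both routes lead to the same coefficient $t_0-\tfrac{(1-t_0)\epsilon}{q-1}$ in front of $\log r_{n,m}$ (your simplification $\tfrac{n}{q-1}+1=\tfrac{n+q-1}{q-1}$ is exactly what makes the two computations agree), and your version has the minor advantage of not needing to revisit the proof of Lemma~\ref{lemma entropy bound} to verify it for the perturbed parameter.
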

\begin{proof}
	The proof is similar to that of Lemma \ref{lemma noninteger}. Hence, we only outline the necessary changes here.
	
	Using Lemma \ref{lemma discrete subspace mass inequality}, we can conclude the existence of $N_0>0$, $\eta_0\in (0,1)$, and $\widetilde{\lambda}_i\in (0,\lambda_i)$ such that for all $m>N_0$, equation \eqref{eq 6141} holds. We choose $t_0>0$ sufficiently small so that
	\begin{equation}
	\label{eq 6142}
		(1-t_0)\frac{i+\min\{i, q-1\}}{n+q-1}=(1-t_0)\lambda_i>\widetilde{\lambda}_i.
	\end{equation}
	Note that the left-side of \eqref{eq 6142}, when viewed as a function of $q$, is continuous for $q\geq 1$. Therefore, it is possible to choose $q'\in (q,q+1)$ sufficiently close to $q$ so that
	\begin{equation}
	\label{eq 6143}
		(1-t_0)\lambda_i':=(1-t_0)\frac{i+\min\{i, q'-1\}}{n+q'-1}>\widetilde{\lambda}_i,
	\end{equation}
	and
	\begin{equation}
	\label{eq 10251}
	t_0-\frac{1-t_0}{n+q'-1}\frac{q'-q}{q-1}n>0.
\end{equation}
Equations \eqref{eq 6143} and \eqref{eq 6141} now imply that for all $m>N_0$, equation \eqref{eq local 21} holds with $\lambda_i$ replaced by $\lambda_i'$. Thus, Lemma \ref{lemma entropy bound} holds with $q$ replaced by $q'$. Using this in \eqref{eq local 41} and recognizing that $q'$ is now non-integer so that one may once again invoke Lemma \ref{lemma bound on chord integral}, we get
	\begin{equation}
	\label{eq local 70}
		\frac{1}{|\overline\mu_m|}\Phi_{\overline{\mu}_m}(P_m, o_m)\geq \log\left(\frac{\delta_0}{2}\right) + t_0 \log r_{n,m}+\frac{1-t_0}{n+q'-1}\log I_{q'}(E_m)+c(t_0,n,q')
	\end{equation}
	in place of \eqref{eq local 41}. Using Lemma \ref{lemma monotonicity in q} with $r=q$ and $s=q'$, we have
	\begin{equation}
	\label{eq local 71}
		\log I_{q'}(E_m)\geq \frac{q'-1}{q-1} \log I_q(E_m)-\frac{q'-q}{q-1}\log V(E_m)+ c(q,q')
	\end{equation}
	for some constant $c(q,q')$. Combining \eqref{eq local 70} and \eqref{eq local 71}, we have
	\begin{equation}
	\begin{aligned}
		&\frac{1}{|\overline\mu_m|}\Phi_{\overline{\mu}_m}(P_m, o_m)\\
		\geq& \log\left(\frac{\delta_0}{2}\right) + t_0 \log r_{n,m}  +\frac{1-t_0}{n+q'-1}\frac{q'-1}{q-1}\log I_q(E_m)-\frac{1-t_0}{n+q'-1}\frac{q'-q}{q-1}n\log r_{n,m}+c(q,q',n,t_0).
	\end{aligned}
	\end{equation}
Here, we used the fact that $\frac{q'-q}{q-1}>0$ and that $V(E_m)\leq \omega_n r_{n,m}^n$. Therefore,
	\begin{equation}
	\begin{aligned}
		&\frac{1}{|\overline\mu_m|}\Phi_{\overline{\mu}_m}(P_m, o_m)\\
		\geq& \log\left(\frac{\delta_0}{2}\right) + \left(t_0-\frac{1-t_0}{n+q'-1}\frac{q'-q}{q-1}n\right) \log r_{n,m}  + \frac{1-t_0}{n+q'-1}\frac{q'-1}{q-1}\log I_q(E_m) +c(q,q',n,t_0).
	\end{aligned}
	\end{equation}

As argued in \eqref{eq local 80}, the term involving $\log I_q(E_m)$ is bounded from below. Therefore, as $r_{n,m}\rightarrow \infty$, with the help of \eqref{eq 10251}, we may conclude that $\Phi_{\overline{\mu}_m}(P_m, o)\rightarrow \infty$ as in \eqref{eq local 50}. 	This is a contradiction to Lemma \ref{lemma bound phi}.
\end{proof}

\begin{theorem}
	Let $1<q<n+1$. If $\mu$ is a finite Borel measure on $\sn$ that satisfies \eqref{eq 1061}, then there exists a convex body $K\in \mathcal{K}^n$ with $o\in K$ such that
	\begin{equation}
		G_q(K,\cdot)=\mu.
	\end{equation}
\end{theorem}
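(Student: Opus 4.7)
The plan is to assemble the pieces developed throughout Section 5 and invoke the approximation scheme. By the homogeneity $G_q(tK,\cdot)=t^{n+q-1}G_q(K,\cdot)$, it suffices to produce a convex body $K$ with $o\in K$ satisfying $G_q(K,\cdot)=\frac{1}{n+q-1}\mu$, after which a suitable dilation of $K$ solves the problem. I would begin by applying the construction at the start of the section to obtain the discrete measures $\overline{\mu}_m$ (supported on $\Omega_m$ in general position, converging weakly to $\mu$ with $|\overline{\mu}_m|=|\mu|$), and then invoke Theorem \ref{thm discrete} with $p=0$ to produce polytopes $P_m\in\mathcal{P}(v_{1,m},\dots,v_{\mathcal{N}_m,m})$ containing the origin in their interiors with
\begin{equation}
F_{0,q}(P_m,\cdot)=\overline{\mu}_m,\qquad\text{i.e.,}\qquad G_q(P_m,\cdot)=\frac{1}{n+q-1}\overline{\mu}_m.
\end{equation}

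Next, the goal is to verify the two hypotheses of Lemma \ref{lemma 1051}. The lower bound $I_q(P_m)>c_0$ is immediate from Lemma \ref{lemma 1066}, since $I_q(P_m)=|G_q(P_m,\cdot)|=\frac{|\mu|}{n+q-1}$. The uniform upper bound on $P_m$ is the main content: assuming $|\mu|=1$ by homogeneity, I would split into cases on whether $q\in(1,n+1)$ is an integer or not. For non-integer $q$, Lemma \ref{lemma noninteger} directly furnishes the boundedness using the subspace mass inequality \eqref{eq 1061}. For $q\in\{2,\dots,n\}$, Lemma \ref{lemma integer} handles the integer case via the interpolation trick that perturbs $q$ slightly to a nearby non-integer $q'$ and uses Jensen's inequality (Lemma \ref{lemma monotonicity in q}) to transfer the bound back.

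Once $P_m$ is uniformly bounded and $I_q(P_m)$ is bounded below, Lemma \ref{lemma 1051} applies and yields, after passing to a subsequence via Blaschke selection, a convex body $K\in\mathcal{K}^n$ with $o\in K$ such that $F_{0,q}(K,\cdot)=\mu$, which is equivalent to $G_q(K,\cdot)=\frac{1}{n+q-1}\mu$. Setting $t=(n+q-1)^{1/(n+q-1)}$ and using the homogeneity of $G_q$ in the convex-body argument, the body $tK$ satisfies
\begin{equation}
G_q(tK,\cdot)=t^{n+q-1}G_q(K,\cdot)=\mu,
\end{equation}
completing the proof. The hard part is conceptually already inside Lemmas \ref{lemma noninteger} and \ref{lemma integer}: the non-trivial step is upgrading the subspace mass inequality for $\mu$ to one for $\overline{\mu}_m$ (Lemma \ref{lemma discrete subspace mass inequality}), combining the lower bound on $\Phi_{\overline{\mu}_m}$ in terms of the John ellipsoid axes (Lemma \ref{lemma entropy bound}) with the chord-integral estimate (Lemma \ref{lemma bound on chord integral}) to force a contradiction with the uniform upper bound on $\Phi_{\overline{\mu}_m}(P_m,o)$ from Lemma \ref{lemma bound phi}. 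For integer $q$, the extra twist of perturbing $q\mapsto q'$ and absorbing the volume factor $V(E_m)\le\omega_n r_{n,m}^n$ via the strict inequality \eqref{eq 10251} is what makes the argument go through.
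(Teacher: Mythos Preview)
Your proposal is correct and follows essentially the same approach as the paper: the paper's proof is a one-line citation of Lemmas \ref{lemma 1051}, \ref{lemma 1066}, \ref{lemma noninteger}, and \ref{lemma integer}, and you have correctly identified and assembled exactly these ingredients, including the split into integer and non-integer $q$ and the final rescaling by homogeneity. The only (harmless) redundancy is that you normalize twice---once by the initial dilation reduction and again by assuming $|\mu|=1$---but this does not affect the validity of the argument.
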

\begin{proof}
	The result follows immediately from Lemmas \ref{lemma 1051}, \ref{lemma 1066}, \ref{lemma noninteger} (in the case $q$ is a non-integer), and \ref{lemma integer} (in the case $q$ is an integer).
\end{proof}

\textbf{Acknowledgements} The authors are extremely grateful to the referees for their many valuable comments and suggestions. Research of Guo was supported, in part, by NSFC Grants 12126319 and 12126368. Research of Xi was supported, in part, by NSFC Grant 12071277 and  STCSM Grant 20JC1412600. Research of Zhao was supported, in part, by NSF Grant DMS--2132330.
\vskip 5pt

\end{document}